\documentclass[10pt]{amsart}
\usepackage[margin=1in]{geometry}
\usepackage{amsthm,amsmath,amsfonts,amssymb,euscript,hyperref,graphics,color,slashed,mathrsfs,graphicx,mathtools,enumerate,cite}
\hypersetup{colorlinks, citecolor=blue, urlcolor=blue, linkcolor=blue}
\usepackage[english]{babel}
\usepackage{bm}

\allowdisplaybreaks

\numberwithin{equation}{section}

\newtheorem{Theorem}{Theorem}
\newtheorem{Lemma}[Theorem]{Lemma}
\newtheorem{Proposition}[Theorem]{Proposition}
\newtheorem{Corollary}[Theorem]{Corollary}

\newtheorem*{MainEstimates}{Main Energy Estimates}
\newtheorem*{RMainEstimates}{Refined Energy Estimates}
\newtheorem*{ExistenceTheorem}{Existence of Future Scattering Fields}
\newtheorem*{ExistenceTheoremP}{Existence of Past Scattering Fields}
\newtheorem*{RigidityTheorem1}{The Rigidity Theorem 1}
\newtheorem*{RigidityTheorem2}{The Rigidity Theorem 2}
\newtheorem*{Remark}{Remark}

\newcommand\Emph{\textbf}
\newcommand{\D}{\partial}

\title[Rigidity for Alfv\'en Waves]{On the rigidity from infinity for  nonlinear Alfv\'en waves %in three dimensional magnetohydrodynamics
	}

\author[Mengni Li]{Mengni Li}
\address{Department of Mathematics and Yau Mathematical Sciences Center, Tsinghua University\\ Beijing, China}
\email{lmn17@mails.tsinghua.edu.cn}

\author[Pin Yu]{Pin Yu}
\address{Department of Mathematics and Yau Mathematical Sciences Center, Tsinghua University\\ Beijing, China}
\email{yupin@mail.tsinghua.edu.cn}

\subjclass[2010]{Primary 35B40; Secondary 76W05}

\keywords{Magnetohydrodynamics, scattering fields, weighted energy estimates}

\begin{document}
\begin{abstract}
The Alfv\'en waves are fundamental wave phenomena in magnetized plasmas and the dynamics of Alfv\'en waves are governed by a system of nonlinear partial differential equations called the MHD system. In this paper, we study the rigidity aspect of the scattering problem for the MHD equations: We prove that the Alfv\'en waves must vanish if their scattering fields vanish at infinities. The proof is based on a careful study of the null structure and a family of weighted energy estimates.
\end{abstract}

\maketitle
\tableofcontents
\section{Introduction}
 Magnetohydrodynamics (MHD) studies mutual interactions between fluids and electromagnetic fields in electrically conducting fluids. Mathematically, we can use a system of nonlinear partial differential equations to study the theory, which is often called the MHD system. It combines the Euler equations and the Maxwell equations to describe the conservation laws of mass, momentum and energy and the laws of electromagnetics. More than just an important branch of fluid mechanics, MHD is much more diverse and complicated than the classical hydrodynamics because it mixes many different wave phenomena. There are two kinds of restoring forces so that the MHD system is perturbed from its stationary states and these forces give rise to different waves. Just as the fluid pressure generates sound waves in fluid mechanics, the magnetic pressure produces magnetoacoustic waves. In addition, the magnetic tension leads to the so called Alfv\'en waves. The  phenomena for the Alfv\'en waves  have fruitful applications in plasma physics, astronomy, industry, etc. However,  the Alfv\'en waves have no counterpart in the ordinary fluid theory since they even exist in the incompressible fluids. Historically, the Alfv\'en waves were studied  in 1942 in \cite{Alfven} by the Swedish plasma physicist Hannes Alfv\'en, who was particularly awarded the Nobel prize in 1970 for this celebrated discovery. 

\smallskip

From a physical point of view \cite{Davidson}, the most interesting plasma physics lives in the regime where a strong magnetic field presents. If the conductivity of the electrically conducting fluid is sufficiently large, we observe that the fluid particles tend to move along the magnetic field lines. Therefore, we assume that the fluid flows along a strong constant background magnetic field $B_0$ and both the fluid viscosity and Ohmic viscosity are zero (i.e., the MHD system is ideal). Without loss of generality, we set both the fluid density and the permeability to be $1$. In such a situation, the ideal incompressible MHD system can be phrased in the following differential relations:
\begin{equation}\label{MHD general}
\begin{cases}
	&\partial_t  v+ v\cdot \nabla v = -\nabla p + (\nabla\times b)\times b, \\
	&\partial_t b + v\cdot \nabla b =  b \cdot \nabla v,\\
	&\operatorname{div} v =0,\\
	&\operatorname{div} b =0,
\end{cases}\end{equation}
where $b(x,t):\mathbb{R}^3\times \mathbb{R}^+\to\mathbb{R}^3$ is the magnetic field, $v(x,t):\mathbb{R}^3\times \mathbb{R}^+\to\mathbb{R}^3$ is the fluid velocity, $p(x,t):\mathbb{R}^3\times \mathbb{R}^+\to\mathbb{R}$ is the fluid pressure. Once we rewrite the Lorentz force term $(\nabla\times b)\times b$ in the following form:
\[(\nabla\times b)\times b=-\nabla(\frac{1}{2}| b|^2)+b \cdot \nabla  b,\]
we can use $p'=p+\frac{1}{2}| b|^2$ to replace $p$ in the \eqref{MHD general}. This yields the following dynamical equations:
\begin{equation}\label{MHD original}
\begin{cases}
	&\partial_t  v+ v\cdot \nabla v = -\nabla p' + b \cdot \nabla b, \\
	&\partial_t b + v\cdot \nabla b =  b \cdot \nabla v.
\end{cases}
\end{equation}
For the sake of simplicity, we will still use $p$ to denote $p'$. We notice that $(v,b)\equiv (0,B_0)$ is a stationary solution (time independent) of the system, $B_0=(0,0,1)$ is a constant vector field. In plasma physics, this is often referred to as a strong magnetic background of the system.

We can employ the so called  Els\"{a}sser variables to diagonalize the system, where the new variables are defined as
\begin{equation*}
\begin{cases}
	&Z_+ = v +b, \\ 
	&Z_- = v-b.
	\end{cases}
\end{equation*}
The three-dimensional ideal incompressible MHD equations now read as
\begin{equation}\label{MHD in Elsasser}\begin{cases}
	&\partial_t  Z_+ +Z_- \cdot \nabla Z_+ = -\nabla p, \\
	&\partial_t  Z_- +Z_+ \cdot \nabla Z_- = -\nabla p,\\
	&\operatorname{div} Z_+ =0,\\
	&\operatorname{div} Z_- =0.
\end{cases}\end{equation}
Since we will study the perturbation of a strong magnetic background, we define
\begin{equation*}
\begin{cases}
	&Z_{+}=z_{+}+B_0,\\ 
	&Z_{-}=z_{-}-B_0,
	\end{cases}
\end{equation*}
where $B_0=(0,0,1)$. Therefore, the MHD equations \eqref{MHD in Elsasser} lead to
\begin{equation}\label{eq:MHD}\begin{cases}
	&\D_{t}z_{+}-B_0\cdot \nabla z_{+} =-\nabla p-z_{-}\cdot\nabla z_{+},\\
	&\D_{t}z_{-}+B_0\cdot \nabla z_{-} =-\nabla p-z_{+}\cdot\nabla z_{-},\\
	&\operatorname{div} z_{+}=0,\\
	&\operatorname{div} z_{-}=0.
\end{cases}\end{equation}
For a vector field $f$ on $\mathbb{R}^3$, its curl is defined as 
$\operatorname{curl} f = \big(\varepsilon_{ijk}\partial_if^j\big) \partial_k$, where $\varepsilon_{ijk}$ is a totally anti-symmetric symbol associated to the volume form of $\mathbb{R}^3$ and repeated indices are understood as summations. If we define
\begin{equation*}
\begin{cases}
&j_{+}=\operatorname{curl} z_{+},\\ 
&j_{-}=\operatorname{curl} z_{-},
\end{cases}
\end{equation*}	
by taking the curl of \eqref{eq:MHD}, we obtain the following system of equations for $(j_{+},j_{-})$:
\begin{equation}\label{eq:curlMHD-2}\begin{cases}
	&\D_{t}j_{+}-B_{0}\cdot \nabla j_{+} =-\nabla z_{-}\wedge\nabla z_{+}-z_{-}\cdot\nabla j_{+},\\
	&\D_{t}j_{-}+B_{0}\cdot \nabla j_{-} =-\nabla z_{+}\wedge\nabla z_{-}-z_{+}\cdot\nabla j_{-},\\
	&\operatorname{div} j_{+}=0,\\
	&\operatorname{div} j_{-}=0.	
\end{cases}\end{equation}
In the above equations, the wedge products are understood as follows:
\begin{equation*}
\begin{cases}
	&\nabla z_- \wedge \nabla z_+ =\big(\varepsilon_{ijk}\partial_i z_-^l\partial_l z_+^j\big)\partial_k, \\ 
	&\nabla z_+ \wedge \nabla z_- =\big(\varepsilon_{ijk}\partial_i z_+^l\partial_l z_-^j\big)\partial_k.
	\end{cases}
\end{equation*}
				
\bigskip

We will briefly summarize the progress on small data theory for three-dimensional incompressible MHD systems with strong magnetic field backgrounds. The pioneering work \cite{Bardos} of Bardos, Sulem and Sulem established the global existence result in the H\"older space $C^{1,\alpha}$ for the ideal case by means of the convolution with fundamental solutions. In the viscous cases, Lin, Xu and Zhang \cite{Lin-Xu-Zhang, Xu-Zhang} used Fourier method to obtain global solutions in the case where the system has the fluid viscosity  but does not have Ohmic dissipation. The smallness of the data is relative to the viscosity so that the method to study Navier-Stokes equations can be adapted. A major step to understand small diffusion regimes was made by He, Xu and Yu in \cite{He-Xu-Yu} by using the energy methods in the physical space. They proved the global nonlinear stability for both the ideal case and the case with small diffusion, where the small diffusion means that the data is independent of the viscosity coefficients. The work \cite{He-Xu-Yu} has been extended in several aspects: one follow-up is the work \cite{Cai-Lei} where the authors also showed the global existence for 2-dimensional MHD systems; another one is the work \cite{Wei-Zhang} where the authors can deal with the case where the fluid viscosity and Ohmic viscosity are slightly different; Xu \cite{Xu} has provided a beautiful approach to derive the 2-dimensional MHD solutions as the limit of 3-dimensional solutions in a thin slab; the method of the proof has been adapted in \cite{G-Yang-Yu} to prove that for semi-linear wave equations on $\mathbb{R}^{1+1}$, if the nonlinearity satisfies the null conditions, the Cauchy problem admits global solutions in the small data regime.

The global existence result is still of great interests in the current work. We will study the scattering behavior of the global solution, i.e., the traces of solutions along the characteristic lines. The scattering theory for waves is an old and classical topic and we refer the readers to the text book \cite{L-P1} for a detailed history. To motivate the rigidity result for Alfv\'en waves, we will give a brief account on several results concerning the scattering uniqueness for free waves. For a free wave $\phi(t,x)$ in three dimensions, that is, a smooth solution of the linear wave equation
\[\Box\phi=0\] 
on $\mathbb{R}^{3+1}$, if the initial data $\phi(0,x)$ and $\big(\partial_t\phi\big)(0,x)$ decay nicely where $|x|\rightarrow \infty$, the solution $\phi(t,x)$ enjoys the following decay estimate:
\[\big|\phi(t,x)\big|\leqslant \frac{C}{1+|t|},\]
where the constant $C$ depends on $\phi(0,x)$ and $\big(\partial_t\phi\big)(0,x)$. We consider $\psi(t,x)=|x|\phi(t,x)$ and we foliate $\mathbb{R}^{1+3}_{t\geqslant 0}$ by outgoing light-cones emanating from the Cauchy hypersurface $\mathbb{R}^3$. We recall that such a light-cone is defined by 
\[\mathbf{C}_c=\big\{(t,x)\big| t-|x| =c\big\},\]
 where $c\in\mathbb{R}$. According to the decay estimate of the free wave, on any outgoing light-cone, 
\[\psi(t,x)=|x|\phi(t,x)=(t-c)\phi(t,x)\]
is a bounded function. The remarkable fact is that  if we let $t\rightarrow \infty$ on a light-cone $\mathbf{C}_c$, the limit exists. More precisely, by virtue of the spherical coordinate system $(r,\theta,\varphi)$ on $\mathbb{R}^3$, the following limit
\[\psi_{\infty}(c,\theta,\varphi)=\lim_{t\rightarrow \infty}r\phi(r+c,r,\theta,\varphi)\]
exists. We refer to $\psi_{\infty}(c,\theta,\varphi)$ as the \emph{scattering field} of the free wave $\phi(t,x)$. Using the above terminologies, the rigidity of free waves from infinity on $\mathbb{R}^{3+1}$ can be stated as follows:

\medskip

\begin{center}
If the scattering field $\psi_{\infty}(c,\theta,\varphi)$ vanishes identically, then $\phi(t,x)\equiv 0$.
\end{center}

\medskip

A standard idea to prove the statement is to use Radon transformation. This is because the explicit solution formula for three dimensional wave equations can be explained as the Radon transform of the initial data. The readers may find details and more related topics and more references in \cite{John, Helgason,L-P1,L-P2,Ludwig,Zalcman}. These types of results are also known as unique continuation (from infinity) theorems for wave equations. We also point out some latest development in the field of unique continuation for wave equations. Ionescu and Klainerman (and later with Alexakis) have proposed a program \cite{Alexakis_Ionescu_Klainerman, Alexakis_Ionescu_Klainerman_Perturbation, Alexakis_Ionescu_Klainerman_Duke, Ionescu_Klainerman_Kerr, Ionescu_Klainerman_Wave, Ionescu_Klainerman_JAMS} to use the unique continuation to study the uniqueness of black holes in a smooth class and not imposing axial symmetry. The paper \cite{Ionescu_Klainerman_Survey} has surveyed the most updated results in this direction. We remark that all these works rely on the Carleman estimates adapted for finite null cones. The Carleman estimates can also be proved from infinity hence applied to study the unique continuation as for the aforementioned example of free wave. In \cite{A-S-S}, based on the construction of pseudo-convex functions and the Carleman estimates, Alexakis, Schlue and Shao proved unique continuation results from infinity for wave equations of the following form
  	\[\Box_g \phi + a^\alpha \partial_\alpha \phi + V \phi = 0\]
  	over Minkowski spacetime: given infinite-order vanishing of the radiation field  at suitable parts of null infinities, then the solution $\phi$ must vanish in an open set in the interior. More precisely, for zero-mass spacetimes, such as perturbations of Minkowski spacetimes, they proved local unique continuation given vanishing on more than half of both future and past null infinity;
  for positive-mass spacetimes, they proved local unique continuation given vanishing on an arbitrarily small part of null infinity near spacelike infinity. The results have also been extended to asymptotically flat spacetimes, such as Schwarzschild spacetimes and Kerr spacetimes. In \cite{A-S}, Alexakis and Shao proved a global rigidity version of the unique continuation result without the assumption of infinite-order vanishing at 
  infinity: for a solution $\phi$ of the following linear wave equation
  	\[\Big(\Box+{V}(t,x)\Big)\phi=0\]
  	over Minkowski spacetime where $V(t,x)$ is a  nice potential or nonlinearity, from half of both future and past null infinity, given finite-order vanishing ($\delta$-order for certain lower-order terms) of the radiation fields of $\phi$  and additional global regularity assumptions, then the solution itself must vanish everywhere.
					
\medskip
					
The rigidity results for Alfv\'en waves in the current paper have the similar flavor as the aforementioned works. Though we study the MHD systems in three dimensions, the Alfv\'en waves in a strong magnetic background behave more like one dimensional waves, e.g., they have no decay in time. Therefore, to motivate the main theorem, it is worth studying a much simpler but enlightening example in details: the one dimensional linear wave equation on $\mathbb{R}^{1+1}$. Let $\Box =-\partial_t^2+\partial_x^2$ be the standard one dimensional wave operator, we consider the following Cauchy problem:
\begin{equation}\label{MAINEQ}\begin{cases}
	&\Box \phi = 0,\\
	&\big(\phi,\partial_t\phi\big)\big|_{t=0}=(\phi_0(x),\phi_1(x)).
\end{cases}\end{equation}
For the sake of simplicity, we may assume that $\phi_0(x)$ and $\phi_1(x)$ are smooth functions with compact support. Its solutions can be written as a superposition of left-traveling and right-traveling waves:
\[\phi(t,x)=\phi_+(x-t)+\phi_-(x+t),\]
where both $\phi_+$ and $\phi_-$ are smooth functions with compact support. In fact, their derivatives are given by
\[\begin{cases}
	&\phi_-'(x)=\frac{1}{2}\left(\phi_0'(x)+\phi_1(x)\right),\\
	& \phi_+'(x)=\frac{1}{2}\left(\phi_0'(x)-\phi_1(x)\right).
	\end{cases}
\]
We use the null coordinates $(u,\underline{u})$:
\begin{equation*}
\begin{cases}
	&u=x-t, \\ 
	&\underline{u}=x+t,
	\end{cases}
\end{equation*}
and the null frame $(L,\underline{L})$:
\begin{equation*}
\begin{cases}
	&L=\partial_t+\partial_x, \\ 
	&\underline{L}=\partial_t-\partial_x.
	\end{cases}
\end{equation*}
On $\mathbb{R}^{1+1}$, for all real numbers $t_0>0$, $u_0$ and $\underline{u}_0$, we define the time slice $\Sigma_{t_0}$, the right-going null curve segment $C_{u_0}^{t_0}$ and the left-going null curve segment $\underline{C}_{\underline{u}_0}^{t_0}$ as follows:
\begin{align*}
	\Sigma_{t_0} &=\big\{(t,x) \,\big|\, t= t_0\big\},\\
	C_{u_0}^{t_0} &=\big\{(t,x) \,\big|\, u=x-t=u_0, 0\leqslant t \leqslant t_0\big\},\\
   \underline{C}_{\underline{u}_0}^{t_0} &=\big\{(t,x) \,\big|\, \underline{u} =x+t=\underline{u}_0, 0\leqslant t\leqslant t_0\big\}.
\end{align*}
Given a point $(0,x_0) \in \Sigma_0$, it determines uniquely a left-traveling characteristic line $\underline{\ell}(x_0)$: 
\[\underline{\ell}(x_0)=\big\{(\underline{u},t)\big|\underline{u}=x_0, t\in \mathbb{R}\big\}.\]
We use $\mathcal{F}$ to denote the collection of all the left-traveling characteristic lines:
\[\mathcal{F}=\big\{\underline{\ell}(\underline{u})\big|\underline{u}\in \mathbb{R}\big\},\]
and we call it \emph{the left future characteristic infinity}. In addition to just being a set, equipped with the global coordinate system $(\underline{u})$, $\mathcal{F}$ can be regarded as a differentiable manifold. 
Similarly, for $(0,x_0) \in \Sigma_0$, it determines uniquely a right-traveling characteristic line ${\ell}(x_0)$: 
\[{\ell}(x_0)=\big\{({u},t)\big|{u}=x_0, t\in \mathbb{R}\big\}.\]
We use ${\underline{\mathcal{F}}}$ to denote the collection of all the right-traveling characteristic lines:
\[{\underline{\mathcal{F}}}=\big\{{\ell}({u})\big| {u}\in \mathbb{R}\big\},\]
and we call it \emph{the right future characteristic infinity}. Using the global coordinate system $({u})$, ${\underline{\mathcal{F}}}$ can also be regarded as a differentiable manifold.

\begin{center}
\includegraphics[width=4in]{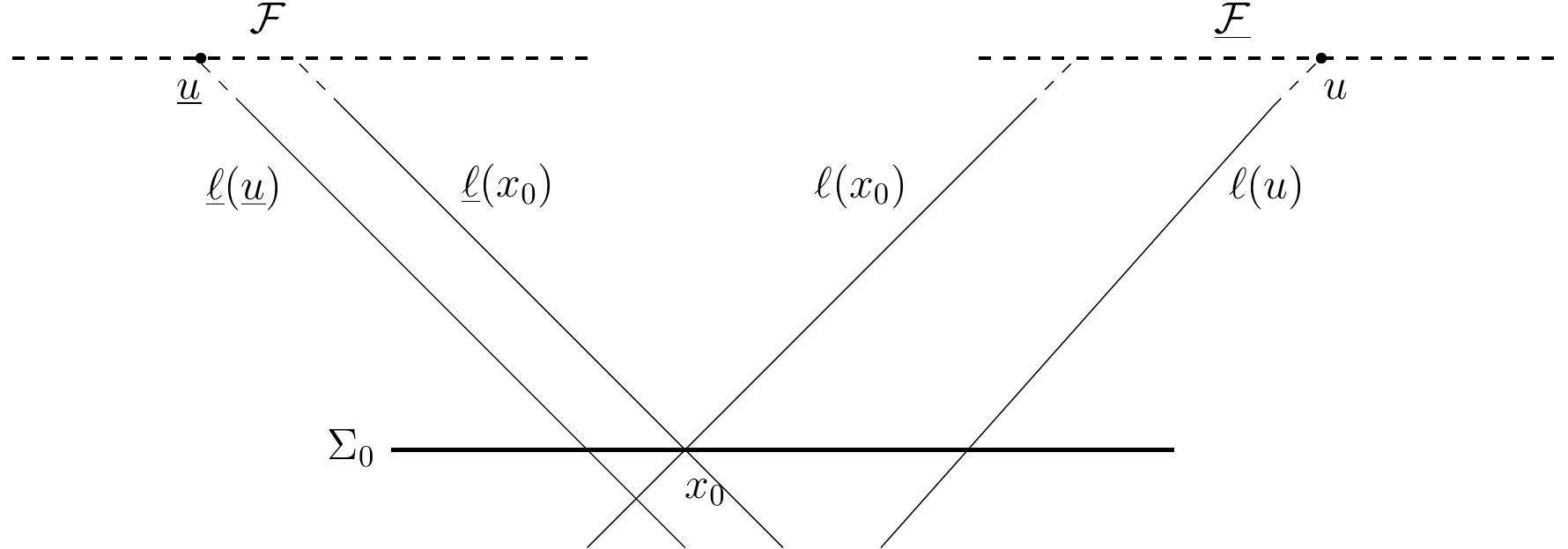}
\end{center}

The geometry is illustrated in the above picture. Heuristically,  a right-traveling characteristic line $\ell(x_0)$ passes the point $(0,{x_0}) \in \Sigma_0$ and hits $\underline{\mathcal{F}}$ at the point $u=x_0$. We can integrate the equation $\Box \phi =0$ on $C_{u}^t$ (this is a segment of $\ell(u)$) and we obtain
\[\underline{L}\phi(t,u+t)=\underline{L}\phi(0,u).\]
Let $t\rightarrow +\infty$. Hence, we define the scattering field $\underline{L}\phi(+\infty;u)$ on $\underline{\mathcal{F}}$ as
\[\underline{L}\phi(+\infty;u)=\lim_{t\to+\infty}\underline{L}\phi(t,u+t)=\underline{L}\phi(0,u),\]
where we use the coordinate system $u$ on $\underline{\mathcal{F}}$.

Similarly, we can define the scattering field ${L}\phi(+\infty;\underline{u})$ on $\mathcal{F}$ as
\[ {L}\phi(+\infty;\underline{u})=\lim_{t\to+\infty} {L}\phi(t,\underline{u}-t)={L}\phi(0,\underline{u}).\] 
The rigidity theorem in this case is obvious: if the scattering fields vanish at infinities, i.e., 
\begin{equation}\label{scattering_model1}\begin{cases}
&\underline{L}\phi(+\infty;u)\equiv 0, \ \ \text{on}~\underline{\mathcal{F}},\\ 
&L\phi(+\infty;\underline{u})\equiv 0, \ \ \text{on}~\mathcal{F},
\end{cases}
\end{equation}
then we have $\underline{L}\phi(0,u)=0$ and $L\phi(0,\underline{u})=0$ for all $t$ and $x$. Hence, $\phi(t,x)\equiv 0$ on $\mathbb{R}^{1+1}$.

\medskip

In the same manner, we can define the past characteristic infinities. We use $\mathcal{P}$ to denote the collection of all the right-traveling (to the past) characteristic lines:
\[\mathcal{P}=\big\{\underline{\ell}(\underline{u})\big|\underline{u}\in \mathbb{R}\big\}.\]
As a set, this is the same as $\mathcal{F}$. We use a different name \emph{the right past characteristic infinity} to call it, because we will consider the traces of the solutions for $t\rightarrow -\infty$. We also use the global coordinate system $(\underline{u})$ to define the differentiable structure on $\mathcal{P}$. 
Similarly, we use ${\underline{\mathcal{P}}}$ to denote the collection of all the left-traveling (to the past) characteristic lines:
\[{\underline{\mathcal{P}}}=\big\{{\ell}({u})\big| {u}\in \mathbb{R}\big\},\]
and we call it \emph{the left past characteristic infinity}. Using the global coordinate system $({u})$, ${\underline{\mathcal{P}}}$ can also be regarded as a differentiable manifold. A similar heuristic argument suggests that a left-traveling characteristic line $\ell(x_0)$ passes the point $(0,{x_0}) \in \Sigma_0$ and hits $\underline{\mathcal{P}}$ at the point $u=x_0$. The geometry can be read off easily from the following picture.

\begin{center}
\includegraphics[width=2in]{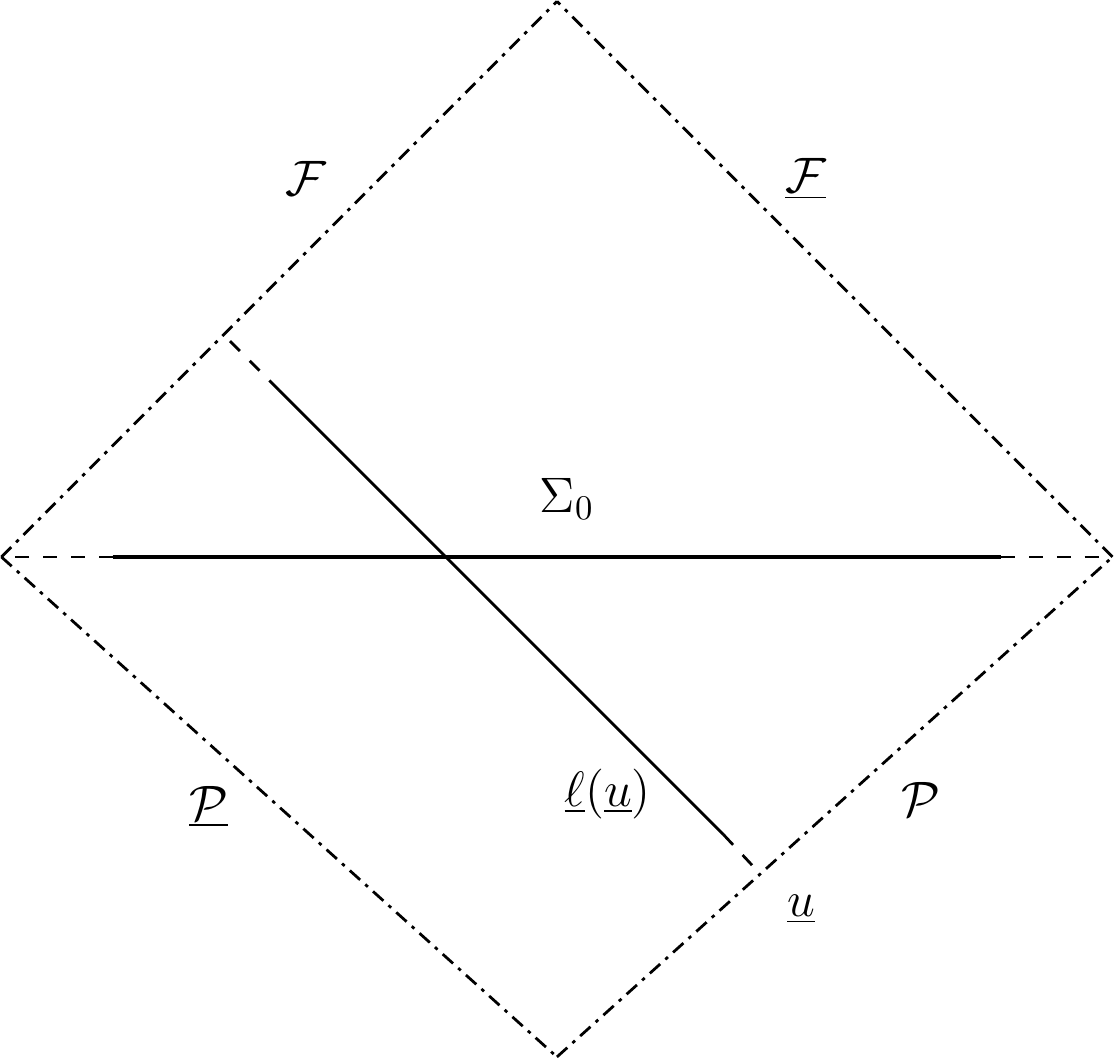}
\end{center}

We define the scattering field $\underline{L}\phi(-\infty;u)$ on $\underline{\mathcal{P}}$ as
\[\underline{L}\phi(-\infty;u)=\lim_{t\to-\infty}\underline{L}\phi(t,u+t)=\underline{L}\phi(0,u),\]
where we use the coordinate system $u$ on $\underline{\mathcal{P}}$. We can also define the scattering field ${L}\phi(-\infty;\underline{u})$ on $\mathcal{P}$ as
\[ {L}\phi(-\infty;\underline{u})=\lim_{t\to-\infty} {L}\phi(t,\underline{u}-t)={L}\phi(0,\underline{u}).\] 
There is another version of rigidity theorem: if the scattering field $\underline{L}\phi(+\infty;u)$ vanishes at the future infinity and the scattering field $L\phi(-\infty;\underline{u})$ vanishes at the past infinity, i.e.,
\begin{equation}\label{scattering_model2}\begin{cases}
&\underline{L}\phi(+\infty;u)\equiv 0, \ \ \text{on}~\underline{\mathcal{F}},\\ 
&L\phi(-\infty;\underline{u})\equiv 0, \ \ \text{on}~\mathcal{P},
\end{cases}
\end{equation}
then  $\underline{L}\phi(0,u)=0$ and $L\phi(0,\underline{u})=0$ for all $t$ and $x$. Hence, $\phi(t,x)\equiv 0$ on $\mathbb{R}^{1+1}$. This second rigidity theorem shares many similar features with results in \cite{Alexakis_Ionescu_Klainerman, Alexakis_Ionescu_Klainerman_Perturbation, Alexakis_Ionescu_Klainerman_Duke, Ionescu_Klainerman_Kerr, Ionescu_Klainerman_Wave, Ionescu_Klainerman_JAMS}. In particular, it resembles \cite{A-S-S} and \cite{A-S}.

\medskip
						
This paper is devoted to the study of the scattering fields of Alfv\'en waves and to the proof of a couple of rigidity theorems of Alfv\'en waves similar to the above examples, in despite of the nonlinear nature of the MHD equations. More precisely, we will not only prove the solution of the MHD system exists globally, but also prove the traces of the solution at characteristics infinities, i.e., the scattering fields, are well-defined. The main statement of the rigidity theorems claims that if the scattering fields of a solution vanish at infinities (at $\mathcal{F}\cup \underline{\mathcal{F}}$ or $\underline{\mathcal{F}}\cup \mathcal{P}$), then the solution itself vanishes identically. Compared to the works \cite{A-S-S} and \cite{A-S}, instead of requiring higher order derivatives of the scattering field to vanish, we only require that the scattering field itself is equal to $0$ at infinity. This is consistent with physical interpretations of scattering fields: the detecting fields of Alv\'en waves are the waves detected from a far-away observer. Therefore, the rigidity theorems have the following physical intuition: if no waves are  detected by the far-away observers, then there are no Alfv\'en waves at all emanating from the plasma. Nevertheless, the nonlinear nature of Alfv\'en waves makes the problem and the approach to it different from the former situations. The underlying idea of the analysis is similar to \cite{He-Xu-Yu,G-Yang-Yu}:  The strong magnetic background provides a null structure for nonlinear terms and this allows us to obtain weighted energy estimates. The rigidity theorems will follow from a careful choice of the weights.
						
\bigskip
					
In the rest of the section, we introduce necessary notations which enable us to give a precise statement of the main theorem.

\medskip
						
On $\mathbb{R}^{3+1}$, we define two characteristic (space-time) vector fields $L_{+}$ and $L_{-}$ as 
\[\begin{cases}&L_{+}=\D_t+B_0\cdot\nabla=\partial_t+\partial_{x_3}, \\  
&L_{-}=\D_t-B_0\cdot\nabla=\partial_t-\partial_{x_3}.
\end{cases}\]
We define two characteristic functions $u_{\pm}$ as 
\[\begin{cases}&u_{+}=x_3-t,\\
&u_{-}=x_3+t.
\end{cases}\]
Given two real numbers $u_{+,0}$ and $u_{-,0}$, the characteristic hypersurfaces $C^{+}_{u_{+,0}}$ and $C^{-}_{u_{-,0}}$ are defined as
\begin{align*}
&C^{+}_{u_{+,0}}=\big\{(t,x)\in \mathbb{R}\times \mathbb{R}^3\big|u_+(t,x) = u_{+,0}\big\},\\ 
&C^{-}_{u_{-,0}}=\big\{(t,x)\in \mathbb{R}\times \mathbb{R}^3\big|u_-(t,x) = u_{-,0}\big\}.
\end{align*}
For $t\geqslant 0$, we also define 
\begin{align*}
&C^{+,t}_{u_+}=C^{+}_{u_+} \cap \big([0,t] \times \mathbb{R}^3\big),\\
&C^{-,t}_{u_-}=C^{-}_{u_-}\cap \big([0,t] \times \mathbb{R}^3\big).
\end{align*} 
For $t^*\geqslant 0$, the spacetime slab $[0,t^*] \times \mathbb{R}^3$ admits a natural time foliation  $\displaystyle \bigcup_{0\leqslant t \leqslant t^*} \Sigma_t$ and two characteristic foliations  $\displaystyle\bigcup_{u_+ \in \mathbb{R}}C^{+,t^*}_{u_+}$ and $\displaystyle\bigcup_{u_- \in \mathbb{R}}C^{-,t^*}_{u_-}$.

We fix a small number $\delta>0$ once for all in this paper ($\delta=0.1$ suffices) and let $\omega=1+\delta$. Let $a\in \mathbb{R}$ be a  constant and it will be determined in the course of proving the rigidity theorems. We call $a$ the position parameter which indeed tracks the centers of the Alfv\'en waves. We remark that the energy estimates derived in the paper will be independent of the choice of $a$. We introduce two weight functions $\langle u_+\rangle$ and $\langle u_-\rangle$ as 
\begin{align*}
	&\langle u_{+}\rangle=(1+|u_+-a|^2)^{\frac{1}{2}}=(1+|x_3-(t+a)|^2)^{\frac{1}{2}},\\
	&\langle u_{-}\rangle=(1+|u_-+a|^2)^{\frac{1}{2}}=(1+|x_3+(t+a)|^2)^{\frac{1}{2}}.
\end{align*}
These two functions depend on $a$ in an obvious way. We remark that 
\[L_+ \langle u_+\rangle = L_- \langle u_-\rangle =0.\]  
						
\smallskip

We turn to the definition of energy norms. For any multi-index $\alpha=(\alpha_1,\alpha_2,\alpha_3)$ with $\alpha_i \in \mathbb{Z}_{\geqslant 0}$ ($i=1,2,3$), we use $\partial^\alpha$ as the shorthand notation for the differential operator $\partial^{\alpha_1}_{x_1}\partial^{\alpha_2}_{x_2}\partial^{\alpha_1}_{x_3}$. We also define $|\alpha|=\alpha_1+\alpha_2+\alpha_3$. For a given multi-index $\alpha$, we define
\begin{equation*}
\begin{cases}
&z_+^{(\alpha)}=\partial^\alpha z_+, \ \ j_{+}^{(\alpha)}=\operatorname{curl} z^{(\alpha)}_{+},\\ 
&z_-^{(\alpha)}=\partial^\alpha z_-, \ \ j_{-}^{(\alpha)}=\operatorname{curl} z^{(\alpha)}_{-}.
\end{cases}
\end{equation*}	

The basic energy norm through $\Sigma_t$ is defined as
\begin{equation*}
	E_{\mp}(t)=\int_{\Sigma_t}\langle u_{\pm}\rangle^{2\omega}| z_{\mp}|^2,
\end{equation*}
and higher order energy norms on $\Sigma_t$ are defined as
\begin{equation*}
	E^{(\alpha)}_{\mp}(t)=\int_{\Sigma_t}\langle u_{\pm}\rangle^{2\omega}\big|j^{(\alpha)}_{\mp}\big|^2,
\end{equation*}
where the integral should be understood as 
\[\int_{\Sigma_{t}}f :=\int_{\Sigma_{t}}f(t,x)dx,\]
i.e., we use the induced measure on $\Sigma_t$ where $\Sigma_t$ is regarded as an embedded linear subspace of $\mathbb{R}\times \mathbb{R}^3$.

For $t\geqslant 0$, the basic flux norms through $C^{\mp,t}_{u_{\mp}}$ are defined as
\begin{equation*}
	 F_{\mp}(t,u_\mp)=\int_{C^{\mp,t}_{u_{\mp}}}\langle u_{\pm}\rangle^{2\omega}|z_{\mp}|^2, \ \ F_{\mp}(t)=\sup_{u_{\mp}\in\mathbb{R}}F_{\mp}(t,u_\mp),
\end{equation*}
and the higher order flux norms through $C^{\mp,t}_{u_{\mp}}$ are defined as
\begin{equation*}
	F^{(\alpha)}_{\mp}(t,u_\mp)=\int_{C^{\mp,t}_{u_{\mp}}}\langle u_{\pm}\rangle^{2\omega}\big|j^{(\alpha)}_{\mp}\big|^2, \ \ F^{(\alpha)}_{\mp}(t)=\sup_{u_{\mp}\in\mathbb{R}}F^{(\alpha)}_{\mp}(t,u_\mp),
\end{equation*}
where the integral should be understood as surface integral
\begin{equation*}
	\int_{C^{\mp,t}_{u_{\mp}}}f:=\int_{C^{\mp,t}_{u_{\mp}}}f(t,x)d\sigma_{\mp},
\end{equation*}
i.e., we use the induced measure on $C^{\mp,t}_{u_{\mp}}$ where $C^{\mp,t}_{u_{\mp}}$ is regarded as an embedded linear subspace of $\mathbb{R}\times \mathbb{R}^3$.

For a given $t_*\in [0,+\infty]$, we also define the total energy norms and total flux norms indexed by a number $k\in \mathbb{Z}_{\geqslant 0}$:
\begin{equation*}\begin{cases}
	&E_{\mp} =\displaystyle\sup_{0\leqslant t\leqslant t^{*}}E_{\mp}(t),\ \ E^{k}_{\mp}=\sup_{0\leqslant t\leqslant t^{*}}\sum_{|\alpha|=k}E^{(\alpha)}_{\mp}(t),\\
	&F_{\mp} =\displaystyle\sup_{0\leqslant t\leqslant t^{*}}F_{\mp}(t),\ \ F^{k}_{\mp}=\sup_{0\leqslant t\leqslant t^{*}}\sum_{|\alpha|=k}F^{(\alpha)}_{\mp}(t).
\end{cases}\end{equation*}

\bigskip

The key ingredient of the work is the following \emph{a priori} energy estimates. The global existence of solutions to the ideal MHD system follows immediately from the estimates. We remark that, though similar estimates have also been established in \cite{He-Xu-Yu}, the estimates of the current work are independent of the position parameter $a$. Moreover, the choice of the position parameter plays a central role in order to prove the rigidity theorem for Alfv\'en waves.
\begin{MainEstimates}
Let $\delta \in(0,\frac{2}{3})$ and $N_* \in \mathbb{Z}_{\geqslant 5}$. There exists a universal constant $\varepsilon_0\in(0,1)$ such that if the initial data $\big(z_+(0,x),z_-(0,x)\big)$ of \eqref{eq:MHD} satisfy
\begin{equation*}
	\mathcal{E}^{N_*}(0) =\sum_{+,-}\sum_{k=0}^{N_*+1}\big\|(1+|x_3\pm a|^2)^{\frac{1+\delta}{2}}\nabla^{k} z_{\pm}(0,x)\big\|_{L^2(\mathbb{R}^3)}^2\leqslant\varepsilon_0^2,
\end{equation*}
then the ideal MHD system \eqref{eq:MHD}  admits a unique global solution $\big(z_+(t,x),z_-(t,x)\big)$. 
Moreover, there is a universal constant $C$ such that the following energy estimates hold:
\begin{equation*}
\begin{cases}
	&\displaystyle
	\sum_{k=0}^{N_*+1}\sup_{t\geqslant 0}\big\|(1+|u_\mp \pm a|^2)^{\frac{1+\delta}{2}}\nabla^{k}z_{\pm}(t,x)\big\|_{L^2(\mathbb{R}^3)}^2 \leqslant C \mathcal{E}^{N_*}(0),\\
&\displaystyle\sup_{t\geqslant 0}\sup_{u_{\pm}}\int_{C_{u_\pm}^{\pm,t}}(1+|u_\mp \pm a|^2)^{1+\delta}|z_{\pm}(t,x)|^2 +\sum_{k=0}^{N_*}\sup_{t\geqslant 0}\sup_{u_{\pm}}\int_{C_{u_\pm}^{\pm,t}}(1+|u_\mp \pm a|^2)^{1+\delta}\big|j^{(k)}_{\pm}(t,x)\big|^2 \leqslant C \mathcal{E}^{N_*}(0).
\end{cases}
\end{equation*}
\end{MainEstimates}

\medskip
The proof of the above main estimates indeed provides a refined estimate. This manifests the null structure of the nonlinear terms of MHD systems with strong magnetic backgrounds.
\begin{RMainEstimates}
	Let $\delta \in(0,\frac{2}{3})$ and $N_* \in \mathbb{Z}_{\geqslant 5}$. There exists a universal constant $\varepsilon_0\in(0,1)$ such that if the initial data $\big(z_+(0,x),z_-(0,x)\big)$ of \eqref{eq:MHD} satisfy
\begin{equation*}
  \mathcal{E}^{N_*}_{\pm}(0) =\sum_{k=0}^{N_*+1}\big\|(1+|x_3\pm a|^2)^{\frac{1+\delta}{2}}\nabla^{k} z_{\pm}(0,x)\big\|_{L^2(\mathbb{R}^3)}^2\leqslant\varepsilon_{\pm,0}^2
\end{equation*}
  and 
  \begin{equation*}
	\mathcal{E}^{N_*}(0)=\mathcal{E}^{N_*}_{+}(0)+\mathcal{E}^{N_*}_{-}(0)\leqslant\varepsilon_{0}^2,
\end{equation*}
	where $\varepsilon_{0}^2=
	\varepsilon_{+,0}^2+\varepsilon_{-,0}^2$,
	then the global solution $\big(z_+(t,x),z_-(t,x)\big)$ to the ideal MHD system \eqref{eq:MHD} satisfies the following estimates: there is a universal constant $C$ such that
\begin{align*}
\displaystyle
	&\sum_{k=0}^{N_*+1}\sup_{t\geqslant 0}\big\|(1+|u_- + a|^2)^{\frac{1+\delta}{2}}\nabla^{k}z_{+}(t,x)\big\|_{L^2(\mathbb{R}^3)}^2 \leqslant C \mathcal{E}^{N_*}_{+}(0)+C\big(\mathcal{E}^{N_*}_{+}(0)\big)^2\mathcal{E}^{N_*}_{-}(0),\\
	&\sum_{k=0}^{N_*+1}\sup_{t\geqslant 0}\big\|(1+|u_+ - a|^2)^{\frac{1+\delta}{2}}\nabla^{k}z_{-}(t,x)\big\|_{L^2(\mathbb{R}^3)}^2 \leqslant C \mathcal{E}^{N_*}_{-}(0)+C\big(\mathcal{E}^{N_*}_{-}(0)\big)^2\mathcal{E}^{N_*}_{+}(0).
\end{align*}
\end{RMainEstimates}

\begin{Remark}
The constants $\varepsilon_0$ and $C$ are independent of the choice of the position parameter $a$.
\end{Remark}							
\begin{Remark}
We point out that these results hold for all $N_* \in \mathbb{Z}_{\geqslant 5}$. In the rest of the paper, we will take $N_*=7$ in the construction of solutions and $N_*=6$ in applications. 
In the Main Energy Estimates, we can also assume that $a=a_0=0$ is fixed so that we construct solutions, although the estimates are independent of $a_0$. 
\end{Remark}	

\bigskip

We now assume that $\big(z_+(t,x),z_-(t,x)\big)$ is the solution constructed from above and we will define the future scattering fields associated to $\big(z_+(t,x),z_-(t,x)\big)$. Towards this goal, we first define the appropriate geometric objects of the scattering fields: the future infinities. 

Given a point $(0, x_1,x_2,x_3) \in \Sigma_0$, it determines uniquely a left-traveling straight line $\ell_-$ parameterized by 
\[\ell_-: \mathbb{R}\rightarrow \mathbb{R}\times \mathbb{R}^3, \ \ t\mapsto (x_1,x_2,x_3+t,t).\]
We remark that  $u_-\big|_{\ell_-}\equiv x_3$.  Since the Cartesian coordinate functions $x_1$ and $x_2$ are also constants on $\ell_-$, we also denote the line by $\ell_-(x_1,x_2,u_-)$ where $x_1$, $x_2$ and $u_-$ are constants. In particular, $\ell_-(x_1,x_2,u_-)\subset C^-_{u_-}$. We use $\mathcal{F}_+$ to denote the collection of all the left-traveling lines:
\[\mathcal{F}_+=\big\{\ell_-(x_1,x_2,u_-)\big|(x_1,x_2,u_-)\in \mathbb{R}^3\big\},\] 
and we call $\mathcal{F}_+$ \emph{the left future infinity}. More than just being a set,  $\mathcal{F}_+$ can be regarded as an \emph{Euclidean space} if we use $(x_1,x_2,u_-)$ as a fixed global coordinate system on $\mathcal{F}_+$.

Similarly, for $(0, x_1,x_2,x_3) \in \Sigma_0$, it defines a right-traveling straight line
\[\ell_+: \mathbb{R}\rightarrow \mathbb{R}\times \mathbb{R}^3, \ \ t\mapsto (x_1,x_2,x_3-t,t).\]
On account of $u_+\big|_{\ell_+}\equiv x_3$, we also denote the line by $\ell_+(x_1,x_2,u_+)$ where $x_1$, $x_2$ and $u_+$ are constants.  We use $\mathcal{F}_-$ to denote the collection of all the right-traveling lines:
\[\mathcal{F}_-=\big\{\ell_+(x_1,x_2,u_+)\big|(x_1,x_2,u_+)\in \mathbb{R}^3\big\},\] 
and we call $\mathcal{F}_-$ \emph{the right future infinity}. We use $(x_1,x_2,u_+)$ as a fixed global coordinate system on $\mathcal{F}_-$ to make $\mathcal{F}_-$ to be an \emph{Euclidean space}.

\begin{center}
\includegraphics[width=4in]{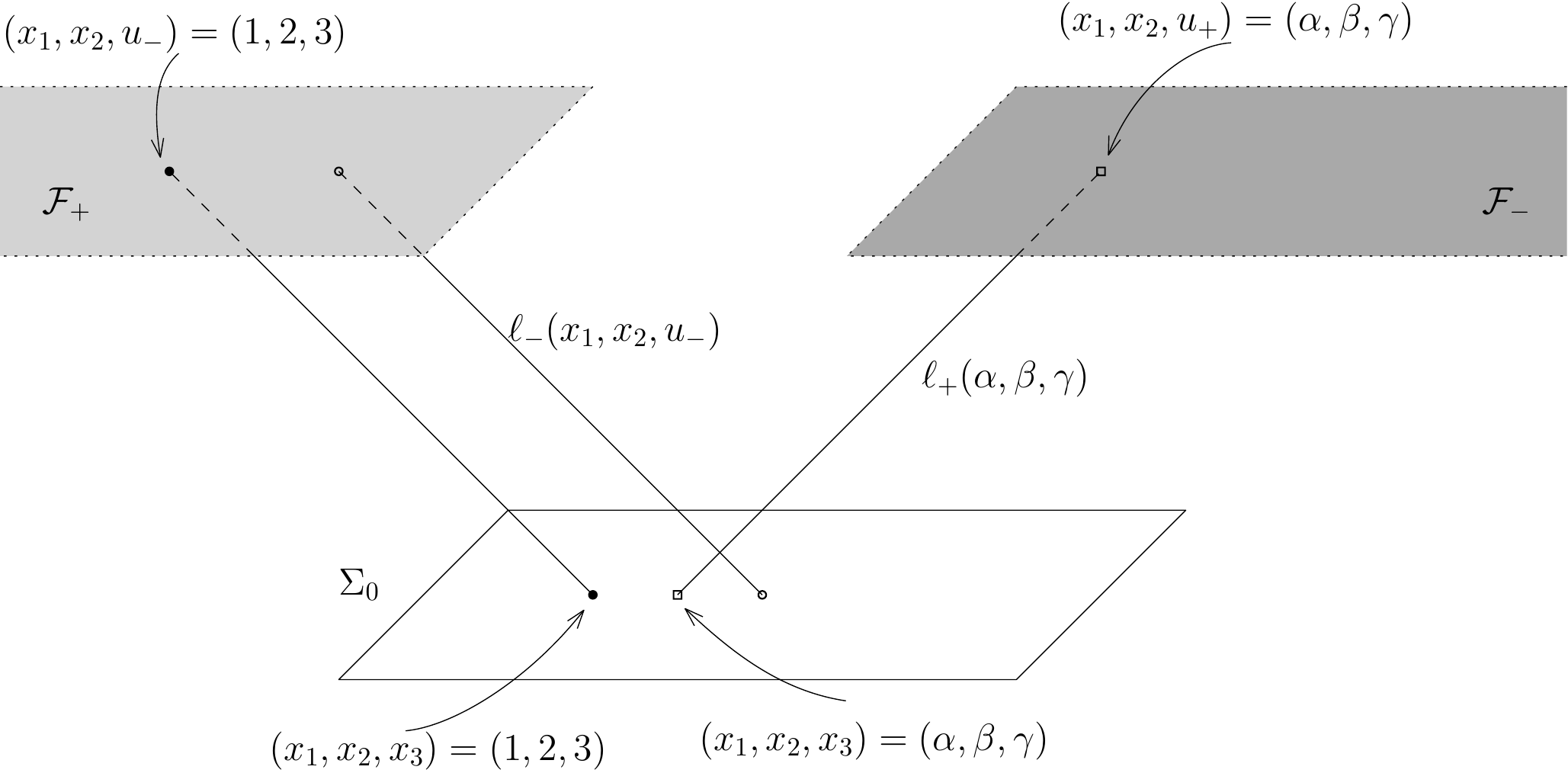}
\end{center}

The previous descriptions can be schematically depicted in the above picture.  We see that $\ell_+(\alpha,\beta,\gamma)$ passes the point $(\alpha,\beta,\gamma) \in \Sigma_0$ and we may think it hits $\mathcal{F}_-$ at the point $(x_1,x_2,u_+)=(\alpha,\beta,\gamma)$. The future infinities $\mathcal{F}_+$ and $\mathcal{F}_-$ are the spaces where the future scattering fields live.		

\bigskip

For a fixed point $p_0=(0,\alpha,\beta,\gamma)$ on $\Sigma_0$, we consider a point $p_{t}=(t,\alpha,\beta,\gamma+t)$ on $\ell_+(\alpha,\beta,\gamma)$, where $t\geqslant 0$. According to \eqref{eq:MHD}, we have
\begin{equation*}
\frac{d}{d\tau}\Big(z_{-}(\tau,\alpha,\beta,\gamma+\tau)\Big) =-\nabla p(\tau,\alpha,\beta,\gamma+\tau)-\big(z_{+}\cdot\nabla z_{-}\big)(\tau,\alpha,\beta,\gamma+\tau).
\end{equation*}
Thus, by integrating this equation along the segment	of $\ell_+(\alpha,\beta,\gamma)$ between $p_0$ and $p_t$, we obtain
\begin{equation*}
	z_-(t,\alpha,\beta,\gamma+t)=z_-(0,\alpha,\beta,\gamma)-\int_0^t\big(\nabla p+z_{+}\cdot\nabla z_{-}\big)(\tau,\alpha,\beta,\gamma+\tau)d\tau.
\end{equation*}
If we take $t\rightarrow +\infty$, we may think $p_t$ converges to $p_{+\infty}=(\alpha,\beta,\gamma)\in \mathcal{F}_-$. We expect that the right hand side of the above equation admits a limit so that it can be regarded as the value of $z_-$ at a point on $\mathcal{F}_-$. Thus, for $(x_1,x_2,u_+)=(\alpha,\beta,\gamma)$, we expect the following expression defines the scattering field $z_-(+\infty;\alpha,\beta,\gamma)$:
\begin{equation*}
	z_-(+\infty;\alpha,\beta,\gamma)=z_-(0,\alpha,\beta,\gamma)-\int_0^{+\infty} \big(\nabla p+z_{+}\cdot\nabla z_{-}\big)(\tau,\alpha,\beta,\gamma+\tau)d\tau.
\end{equation*}
We indeed have
\begin{ExistenceTheorem} For the solution constructed in the statement of the Main Energy Estimates, the following integrals
\begin{equation*}
\begin{cases}
	&\displaystyle z_+(+\infty;x_1,x_2,u_-):=z_+(0,x_1,x_2,u_-)-\int_0^{+\infty} \big(\nabla p+z_{-}\cdot\nabla z_{+}\big)(\tau,x_1,x_2,u_--\tau)d\tau,\\
	&\displaystyle z_-(+\infty;x_1,x_2,u_+):=z_-(0,x_1,x_2,u_+)-\int_0^{+\infty} \big(\nabla p+z_{+}\cdot\nabla z_{-}\big)(\tau,x_1,x_2,u_++\tau)d\tau
	\end{cases}
\end{equation*}
converge. The above formulas define two vector fields $z_+(+\infty;x_1,x_2,u_-)$ and $z_-(+\infty;x_1,x_2,u_+)$ on the future infinities $\mathcal{F}_+$ and $\mathcal{F}_-$ respectively. We call $\big((z_+(+\infty;x_1,x_2,u_-),z_-(+\infty;x_1,x_2,u_+)\big)$ the  {\bf future scattering fields} associated to the solution $\big(z_+(t,x),z_-(t,x)\big)$.
\end{ExistenceTheorem}

\bigskip

We also define the past scattering fields associated to $\big(z_+(t,x),z_-(t,x)\big)$ in the same manner.

We use $\mathcal{P}_+$ to denote the collection of all the right-traveling (to the past) lines:
\[\mathcal{P}_+=\big\{\ell_-(x_1,x_2,u_-)\big|(x_1,x_2,u_-)\in \mathbb{R}^3\big\},\] 
and we call $\mathcal{P}_+$ \emph{the right past infinity}. We remark that, as a set, it is the same as $\mathcal{F}_+$. We use $(x_1,x_2,u_-)$ as the global coordinate system so that $\mathcal{P}_+$ can be regarded as an \emph{Euclidean space}. Similarly, we use $\mathcal{P}_-$ to denote the collection of all the left-traveling (to the past) lines:
\[\mathcal{P}_-=\big\{\ell_+(x_1,x_2,u_+)\big|(x_1,x_2,u_+)\in \mathbb{R}^3\big\},\] 
and we call $\mathcal{P}_-$ \emph{the left past infinity}. We use $(x_1,x_2,u_+)$ as a fixed global coordinate system on $\mathcal{P}_-$ to make $\mathcal{P}_-$ to be an \emph{Euclidean space}. Heuristically, we may think $\ell_+(\alpha,\beta,\gamma)$ passes the point $(\alpha,\beta,\gamma) \in \Sigma_0$ and we may think it hits $\mathcal{P}_-$ at the point $(x_1,x_2,u_+)=(\alpha,\beta,\gamma)$. The past infinities $\mathcal{P}_+$ and $\mathcal{P}_-$ are the spaces where the past scattering fields live.		
	
\bigskip
	
For a given point $p_0=(0,\alpha,\beta,\gamma)$ on $\Sigma_0$, we consider a point $p_{t}=(t,\alpha,\beta,\gamma+t)$ on $\ell_+(\alpha,\beta,\gamma)$, where $t\leqslant 0$. 
An argument analogous to the one used to yield $z_-(+\infty;\alpha,\beta,\gamma)$ naturally follows, with the major change  substituting $\mathcal{P}_-$ for $\mathcal{F}_-$and $t\to-\infty$ for $t\to+\infty$. Namely,
for $(x_1,x_2,u_+)=(\alpha,\beta,\gamma)$, we expect the following expression defines the scattering field $z_-(-\infty;\alpha,\beta,\gamma)$:
\begin{equation*}
	z_-(-\infty;\alpha,\beta,\gamma)=z_-(0,\alpha,\beta,\gamma)-\int_0^{-\infty} \big(\nabla p+z_{+}\cdot\nabla z_{-}\big)(\tau,\alpha,\beta,\gamma+\tau)d\tau.
\end{equation*}
We have a parallel existence theorem as future scattering fields:
\begin{ExistenceTheoremP} For the solution constructed in the statement of the Main Energy Estimates, the following integrals
\begin{equation*}
\begin{cases}
&\displaystyle z_+(-\infty;x_1,x_2,u_-):=z_+(0,x_1,x_2,u_-)-\int_0^{-\infty} \big(\nabla p+z_{-}\cdot\nabla z_{+}\big)(\tau,x_1,x_2,u_--\tau)d\tau,\\
&\displaystyle z_-(-\infty;x_1,x_2,u_+):=z_-(0,x_1,x_2,u_+)-\int_0^{-\infty} \big(\nabla p+z_{+}\cdot\nabla z_{-}\big)(\tau,x_1,x_2,u_++\tau)d\tau
\end{cases}
\end{equation*}
converge. The above formulas define two vector fields $z_+(-\infty;x_1,x_2,u_-)$ and $z_-(-\infty;x_1,x_2,u_+)$ on the past infinities $\mathcal{P}_+$ and $\mathcal{P}_-$ respectively. We call $\big((z_+(-\infty;x_1,x_2,u_-),z_-(-\infty;x_1,x_2,u_+)\big)$ the  {\bf past scattering fields} associated to the solution $\big(z_+(t,x),z_-(t,x)\big)$.
\end{ExistenceTheoremP}	

\bigskip
							
We are ready to state the main results of this paper. There are two versions of the rigidity. The first one matches the situation of \eqref{scattering_model1} as follows:

\begin{RigidityTheorem1}\label{theorem: main theorem}
If the scattering fields $\big(z_+(+\infty;x_1,x_2,u_-),z_-(+\infty;x_1,x_2,u_+)\big)$ vanish on the future infinities, i.e.,
\[\begin{cases}
&z_+(+\infty;x_1,x_2,u_-)\equiv 0 \ \ \text{on} \ \ \mathcal{F}_+,\\
&z_-(+\infty;x_1,x_2,u_+)\equiv 0 \ \ \text{on} \ \ \mathcal{F}_-,
\end{cases}
\]
then the solution itself vanishes identically, i.e., for all $(t,x)\in \mathbb{R}\times \mathbb{R}^3$, $\big(z_+(t,x),z_-(t,x)\big)=(0,0)$. 
\end{RigidityTheorem1}

\bigskip

The second one resembles the situation of \eqref{scattering_model2}. As we mentioned before, this can be viewed as an analogue of the \cite{A-S-S, A-S}.
\begin{RigidityTheorem2}\label{theorem: main theorem2}
If the scattering field $z_-(+\infty;x_1,x_2,u_+)$ vanishes on the future infinity and the scattering field  $z_+(-\infty;x_1,x_2,u_-)$ vanishes on the past infinity, i.e.,
\[\begin{cases}
&z_-(+\infty;x_1,x_2,u_+)\equiv 0 \ \ \text{on} \ \ \mathcal{F}_-,\\
&z_+(-\infty;x_1,x_2,u_-)\equiv 0 \ \ \text{on} \ \ \mathcal{P}_+,
\end{cases}\]
then the solution itself vanishes identically, i.e., for all $(t,x)\in \mathbb{R}\times \mathbb{R}^3$, $\big(z_+(t,x),z_-(t,x)\big)=(0,0)$. 
\end{RigidityTheorem2}

\begin{center}
\includegraphics[width=2in]{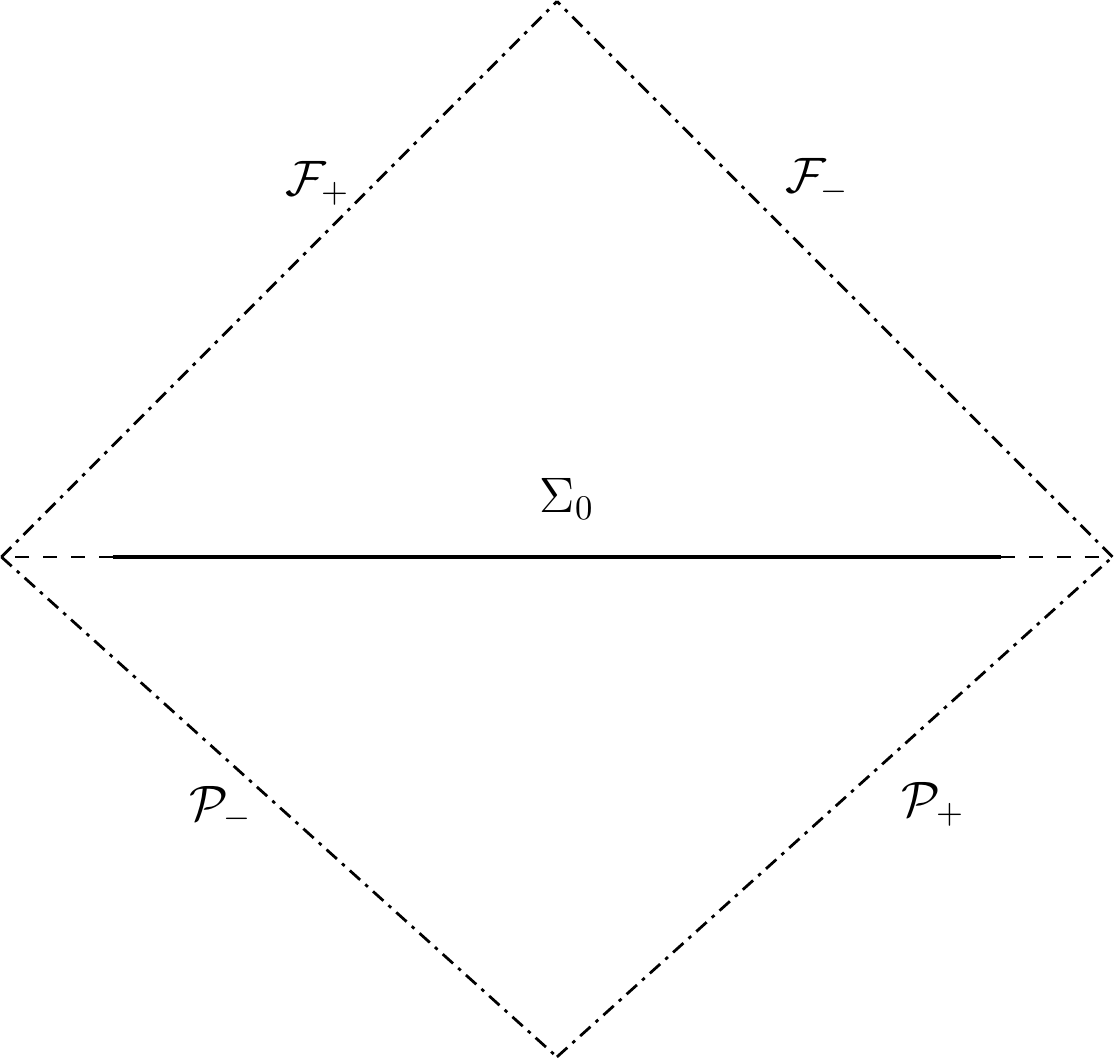}
\end{center}

If we depict the future and past infinities as in the above picture, these two rigidity theorems claim that if scattering fields vanish on two adjacent infinities, then the solution itself vanishes identically.

\bigskip
							
The rest of this paper is organized as follows. In Section \ref{energy estimates}, we establish the main estimates hence the global existence of the solutions. The first part of  Section \ref{main proof} is devoted to the construction of scattering fields as well as functional spaces on the infinities. Finally, we prove the rigidity theorems in the second part of Section \ref{main proof}. The key idea of the proof is to translate the point-wise vanishing properties of the scattering fields at infinities to  $L^2$-smallness conditions for solutions at a (large) finite time.

\section{The energy estimates for Alfv\'en waves}\label{energy estimates}
We assume that the size of the initial energy is given:  $\mathcal{E}^{N_*}(0)=\varepsilon^2$. The parameter $\varepsilon$ is a small positive number and its size will be determined at the end of this section. We fix a positive integer $5\leqslant N_* \leqslant 10$ and we will take $N_*=7$.

We prove the Main Energy Estimates by the standard method of continuity. We assume that there exists a $t^*>0$ such that we have the following energy bound:
\begin{equation}\label{bootstrap assumption}
\sum_{+,-}\bigg(E_{\pm}+F_{\pm}+\sum_{k=0}^{N_{*}}E_{\pm}^k+\sum_{k=0}^{N_{*}}F_{\mp}^k\bigg)\leqslant 2\big(C_1\big)^2\varepsilon^2.
\end{equation}
This is a legitimate assumption: for any $C_1\geqslant 1$,  \eqref{bootstrap assumption} holds for the initial data, hence it remains correct for at least a short time interval $[0,t^*]$. To implement the continuity argument,  we will show that there exist universal constants $\varepsilon_0$ and $C_1\geqslant 1$, under the assumption \eqref{bootstrap assumption}, we can indeed obtain a better bound: 
\begin{equation}\label{bootstrap assumption improved}
\sum_{+,-}\bigg(E_{\pm}+F_{\pm}+\sum_{k=0}^{N_{*}}E_{\pm}^k+\sum_{k=0}^{N_{*}}F_{\mp}^k\bigg)\leqslant \big(C_1\big)^2\varepsilon^2,
\end{equation}
provided for all $\varepsilon<\varepsilon_0$. We emphasize that the constants $\varepsilon_0$ and $C_1$ depend neither on the lifespan $[0,t^*]$ nor on the position parameter $a$ (implicitly written in the energy norms through the weight functions). Therefore, the assumption \eqref{bootstrap assumption} will never be saturated so that we can continue $t^*$ to $+\infty$. The global existence of solutions to \eqref{eq:MHD} also follows. Therefore, it suffices to prove \eqref{bootstrap assumption improved} under \eqref{bootstrap assumption}.

\begin{Remark}
In the following proof, we assume
$\mathcal{E}^{N_*}_{+}(0)=\varepsilon_{+}^2$, $\mathcal{E}^{N_*}_{-}(0)=\varepsilon_{-}^2$, where $\varepsilon_{+}^2+\varepsilon_{-}^2=\varepsilon^2$. Moreover, we can split the assumption \eqref{bootstrap assumption} into
\begin{equation*}\begin{split}
E_{+}+F_{+}+\sum_{k=0}^{N_{*}}E_{+}^k+\sum_{k=0}^{N_{*}}F_{+}^k&\leqslant 2\big(C_1\big)^2\varepsilon_{+}^2,\\
E_{-}+F_{-}+\sum_{k=0}^{N_{*}}E_{-}^k+\sum_{k=0}^{N_{*}}F_{-}^k&\leqslant 2\big(C_1\big)^2\varepsilon_{-}^2.
\end{split}\end{equation*}
In such a way, we can prove the Refined Energy Estimates at the same time.
\end{Remark}

\subsection{Preliminary estimates}
To begin with,  we review a weighted version of  div-curl lemma: 
\begin{Lemma}[div-curl lemma]\label{d-c}
	Let $\lambda(x)\geqslant 1$ be a smooth positive function on $\mathbb{R}^3$ with the additional property that
	 \[|\nabla\lambda(x)|\leqslant C \lambda(x)\]
	 for all $x\in \mathbb{R}^3$, where $C$ is a universal constant. For any \emph{divergence free} smooth vector field $v (x)$ on ${\mathbb{R}^3}$ , we have 
\begin{equation}\label{eq:d-c2}
\big\|\sqrt\lambda\nabla v\big\|_{L^2(\mathbb{R}^3)}^2 \lesssim \big\|\sqrt\lambda\operatorname{curl }v\big\|_{L^2(\mathbb{R}^3)}^2+ \big\|\sqrt\lambda v\big\|_{L^2(\mathbb{R}^3)}^2,
\end{equation}
provided $\sqrt{\lambda}\nabla v\in L^2({\mathbb{R}^3})$ and ${\sqrt{\lambda}}v\in L^2({\mathbb{R}^3})$.
\end{Lemma}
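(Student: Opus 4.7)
The plan is to reduce the weighted inequality to the standard pointwise algebraic identity relating $|\nabla v|^2$ to $|\operatorname{curl} v|^2$ (valid once $\operatorname{div} v = 0$), and then to control the error generated by differentiating the weight $\lambda$ via the hypothesis $|\nabla\lambda|\lesssim \lambda$. The real work is arranging the integration by parts so that the $\nabla\lambda$-term can be absorbed into the left-hand side.

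Explicitly, from $(\operatorname{curl} v)^k = \varepsilon_{ijk}\partial_i v^j$ together with the contraction $\varepsilon_{ijk}\varepsilon_{lmk} = \delta_{il}\delta_{jm} - \delta_{im}\delta_{jl}$, one obtains the pointwise identity
\[
|\nabla v|^2 = |\operatorname{curl} v|^2 + \partial_i v^j\,\partial_j v^i.
\]
Multiplying by $\lambda$ and integrating over $\mathbb{R}^3$ reduces the problem to handling $\int \lambda\, \partial_i v^j\, \partial_j v^i\,dx$. Integrating by parts in $\partial_j$ produces
\[
\int \lambda\, \partial_i v^j\, \partial_j v^i\,dx = -\int \lambda\, v^i\, \partial_i(\operatorname{div} v)\,dx - \int (\partial_j \lambda)\, (\partial_i v^j)\, v^i\,dx,
\]
and the first term vanishes because $v$ is divergence free. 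For the second, the hypothesis $|\nabla \lambda| \leqslant C\lambda$ supplies the pointwise majorant $C\lambda\,|\nabla v|\,|v|$, and Cauchy-Schwarz together with $ab\leqslant \tfrac{1}{2}a^2 + \tfrac{1}{2}b^2$ gives
\[
\left|\int (\partial_j \lambda)(\partial_i v^j)v^i\,dx\right|\leqslant \tfrac{1}{2}\int \lambda\,|\nabla v|^2\,dx + \tfrac{C^2}{2}\int \lambda\,|v|^2\,dx.
\]
Inserting this bound into the identity and absorbing $\tfrac{1}{2}\int \lambda|\nabla v|^2$ onto the left yields the claimed estimate, with constant depending only on the $C$ of the hypothesis.

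The only delicate point, and the step I would expect to spend the most care on, is justifying the integration by parts globally on $\mathbb{R}^3$. Since $\lambda \geqslant 1$, the hypotheses already force $v,\nabla v \in L^2(\mathbb{R}^3)$. I would introduce a radial cutoff $\chi_R(x)=\chi(|x|/R)$ with $\chi \in C^\infty_c$ equal to $1$ near the origin, redo the computation above with $\lambda$ replaced by $\lambda\chi_R$, and let $R\to\infty$. The extra boundary-type integral produced by the derivative of the cutoff is dominated by $R^{-1}\|\sqrt{\lambda}\nabla v\|_{L^2}\|\sqrt{\lambda}v\|_{L^2}$ and tends to zero, while dominated convergence on the remaining pieces, using the finiteness of both weighted $L^2$-quantities, closes the argument.
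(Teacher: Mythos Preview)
Your argument is correct and complete: the pointwise identity $|\nabla v|^2 = |\operatorname{curl} v|^2 + \partial_i v^j\,\partial_j v^i$, the integration by parts using $\operatorname{div} v = 0$, the absorption via $|\nabla\lambda|\leqslant C\lambda$, and the cutoff justification are all in order. Note that the paper itself does not actually prove this lemma; it simply refers the reader to Lemma~2.6 of \cite{He-Xu-Yu}, so there is no in-paper proof to compare against --- your self-contained argument is precisely the standard one and is almost certainly what appears in the cited reference.
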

We refer the readers to the Lemma 2.6 in \cite{He-Xu-Yu} for proof. In applications, for any multi-index $\gamma$ with $0\leqslant|\gamma| \leqslant N_*$, we will take $v=\nabla z_\pm^{(\gamma-1)}$. This is a divergence free vector field and we obtain from \eqref{eq:d-c2} that
	\begin{equation}\label{eq:d-c3}
	\big\|\sqrt\lambda\nabla z_\pm^{(\gamma)}\big\|_{L^2(\Sigma_\tau)}^2\lesssim \big\|\sqrt\lambda j_\pm^{(\gamma)}\big\|_{L^2(\Sigma_\tau)}^2+\big\|\sqrt\lambda
	\nabla z_\pm^{(\gamma-1)}\big\|_{L^2(\Sigma_\tau)}^2.
	\end{equation}
By induction on $\gamma$, we have
	\begin{equation}\label{z to j inequality}
	\big\|\sqrt\lambda\nabla z_\pm^{(\gamma)}\big\|_{L^2(\Sigma_\tau)}^2 \lesssim \big\|\sqrt\lambda
	z_\pm\big\|_{L^2(\Sigma_\tau)}^2+\sum_{k=0}^{|\gamma|} \big\|\sqrt\lambda
	j_\pm^{( k)}\big\|_{L^2(\Sigma_\tau)}^2,
	\end{equation}
where $0\leqslant|\gamma| \leqslant N_*$. 
Here and in the sequel, the notation $A\lesssim B$ means that there is a universal constant $C$ such that $A\leqslant CB$.

\smallskip

Later on, the weight function $\lambda$ will be constructed from $\langle u_+ \rangle$ and $\langle u_- \rangle$.  Recall that 
\begin{equation*}
\begin{cases}
	&\langle u_{+}\rangle=(1+|x_3-(t+a)|^2)^{\frac{1}{2}},\\
	&\langle u_{-}\rangle=(1+|x_3+(t+a)|^2)^{\frac{1}{2}}.
\end{cases}
\end{equation*}
We collect elementary properties of $\langle u_\pm \rangle$:
\begin{Lemma}\label{lemma weights}
For all $t \in \mathbb{R}$ and $\omega=1+\delta$, we have the following inequalities:
\begin{enumerate}[(i)]
\item	For $l=1,2$, there holds
	\begin{equation}\label{differentiate weights}
	|\nabla^l  u_{\pm}| \leqslant 1.
	\end{equation}
\item For $l=1,2$, there hold
\begin{equation}\label{differentiate weights coro}
\begin{split}
&|\nabla^l \langle u_{\pm}\rangle| \leqslant 1\leqslant\langle u_{\pm}\rangle,  \\
&|\nabla^l \langle u_\pm\rangle^\omega|\lesssim \langle u_\pm\rangle^\omega, \\
&|\nabla^l \langle u_\pm\rangle^{2\omega}|\lesssim \langle u_\pm\rangle^{2\omega}, \\
&\Big|\nabla^l\Big(\frac{\langle u_{\pm}\rangle^{\omega}}{\langle u_{\mp}\rangle^{\frac{\omega}{2}}}\Big)\Big|\lesssim \frac{\langle u_{\pm}\rangle^{\omega}}{\langle u_{\mp}\rangle^{\frac{\omega}{2}}}.
\end{split}
\end{equation}
\item For the product of $\langle u_{+}\rangle$ and $\langle u_{-}\rangle$, there holds
\begin{equation}\label{u+u-}
\langle u_{+}\rangle\langle u_{-}\rangle\gtrsim 1+|t+a|.
\end{equation} 
\item For $|x-x'|\leqslant 2$, we have 
\begin{equation}\label{eq:xleq2}
	\langle u_{\pm}\rangle(\tau,x)\lesssim\langle u_{\pm}\rangle(\tau,x'),
\end{equation}
and
\begin{equation}\label{eq:xleq2coro}
\begin{split}
	& \langle u_{\pm}\rangle^{\omega}(\tau,x) \lesssim \langle u_{\pm}\rangle^{\omega}(\tau,x'),\\
	&(\langle u_{\mp}\rangle^{\omega}\langle u_{\pm}\rangle^{\frac{\omega}{2}})(\tau,x)\lesssim (\langle u_{\mp}\rangle^{\omega}\langle u_{\pm}\rangle^{\frac{\omega}{2}})(\tau,x').
	\end{split}
\end{equation}
\item For $|x-x'|\geqslant 1$, we have 
\begin{equation}\label{eq:xgeq1}
	\langle u_{\pm}\rangle(\tau,x)\lesssim\langle u_{\pm}\rangle(\tau,x')+|x-x'|,
\end{equation}
and
\begin{equation}\label{eq:xgeq1coro}
\begin{split}
	&\langle u_{\pm}\rangle^{\omega}(\tau,x) \lesssim \langle u_{\pm}\rangle^{\omega}(\tau,x')+|x-x'|^{\omega},\\
	&(\langle u_{\mp}\rangle^{\omega}\langle u_{\pm}\rangle^{\frac{\omega}{2}})(\tau,x)\lesssim (\langle u_{\mp}\rangle^{\omega}\langle u_{\pm}\rangle^{\frac{\omega}{2}})(\tau,x')
	+|x-x'|^{\frac{3\omega}{2}}.
	\end{split}
\end{equation}
The implicit constants in the $\lesssim$'s are all independent of $a$.
\end{enumerate}

\end{Lemma}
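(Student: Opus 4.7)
The plan is to reduce everything to elementary facts about the Japanese bracket $f(s)=\sqrt{1+s^2}$ applied to the two affine quantities $s_+ := u_+ - a = x_3 - (t+a)$ and $s_- := u_- + a = x_3 + (t+a)$, so that $\langle u_\pm\rangle = f(s_\pm)$. Since $s_\pm$ are linear in $(t, x_3)$ with $\partial_{x_3} s_\pm = 1$ and every other spatial derivative vanishing, part (i) is immediate: $\nabla u_\pm = e_3$, giving $|\nabla u_\pm|=1$, and $\nabla^2 u_\pm = 0$.

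For (ii) I would differentiate $\langle u_\pm\rangle = f(s_\pm)$ directly. Using $f'(s) = s/\sqrt{1+s^2}$ and $f''(s)=(1+s^2)^{-3/2}$, one immediately gets $|\nabla\langle u_\pm\rangle|\leq 1$ and $|\nabla^2\langle u_\pm\rangle|\leq 1$, both of which are bounded by $1\leq \langle u_\pm\rangle$. For the $\omega$-th power, the chain rule gives $\partial_{x_3}\langle u_\pm\rangle^\omega = \omega\, s_\pm \langle u_\pm\rangle^{\omega-2}$, so $|\nabla\langle u_\pm\rangle^\omega|\lesssim\langle u_\pm\rangle^{\omega-1}\leq\langle u_\pm\rangle^\omega$; a further differentiation produces only terms of the same schematic form. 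The mixed factor $\langle u_+\rangle^\omega/\langle u_-\rangle^{\omega/2}$ is handled identically by Leibniz, using the same scheme applied to $\langle u_-\rangle^{-\omega/2}$.

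For (iii), I would use the key identity $s_- - s_+ = 2(t+a)$, which forces $\max(|s_+|,|s_-|)\geq |t+a|$. Therefore
\[
\langle u_+\rangle\langle u_-\rangle \geq \max(\langle u_+\rangle,\langle u_-\rangle)\geq \sqrt{1+(t+a)^2}\gtrsim 1+|t+a|.
\]
For (iv) and (v), the crucial input is that $x\mapsto s_\pm(\tau,x)$ is $1$-Lipschitz in $x_3$, so $|s_\pm(\tau,x)-s_\pm(\tau,x')|\leq |x-x'|$. When $|x-x'|\leq 2$, this yields $\langle u_\pm\rangle(\tau,x)^2 \leq 1+(|s_\pm(\tau,x')|+2)^2\leq 9\langle u_\pm\rangle(\tau,x')^2$, and \eqref{eq:xleq2} follows. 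When $|x-x'|\geq 1$, the elementary inequality $\sqrt{1+(A+B)^2}\leq \sqrt{1+A^2}+B$ for $B\geq 0$ (check by squaring, using $\sqrt{1+A^2}\geq A$) gives \eqref{eq:xgeq1}.

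To upgrade the pointwise bounds in (iv)--(v) to the $\omega$-power statements \eqref{eq:xleq2coro} and \eqref{eq:xgeq1coro}, I would raise the bounds to the appropriate power and invoke $(A+B)^p\leq 2^{p-1}(A^p+B^p)$ for $p\geq 1$; the mixed factor $\langle u_\mp\rangle^\omega\langle u_\pm\rangle^{\omega/2}$ is treated by multiplying the two individual estimates, the worst cross term being of order $|x-x'|^{3\omega/2}$. I do not anticipate any genuine obstacle: the whole lemma is bookkeeping of Japanese-bracket calculus on the affine functions $s_\pm$. The only point worth stressing in the write-up is that since $a$ enters exclusively through the constant shift $t+a$ inside $s_\pm$, every implicit constant is manifestly independent of $a$, as the statement requires.
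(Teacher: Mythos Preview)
Your approach is essentially the paper's: direct differentiation for (i)--(ii); the identity $s_--s_+=2(t+a)$ for (iii) (the paper states the equivalent fact that one of $|x_3\mp(t+a)|$ is at least $|t+a|/2$); and the $1$-Lipschitz property of $x\mapsto s_\pm$ for (iv)--(v), where the paper phrases (v) via the mean value theorem applied to $\langle u_\pm\rangle$ itself, yielding the same bound $\langle u_\pm\rangle(\tau,x)\le\langle u_\pm\rangle(\tau,x')+|x-x'|$. One caution on the second line of \eqref{eq:xgeq1coro}: multiplying the two individual estimates produces, besides $|x-x'|^{3\omega/2}$, cross terms such as $\langle u_\mp\rangle^\omega(\tau,x')\,|x-x'|^{\omega/2}$ and $\langle u_\pm\rangle^{\omega/2}(\tau,x')\,|x-x'|^{\omega}$, so your assertion that $|x-x'|^{3\omega/2}$ is ``the worst'' is not self-evident; the paper likewise declares this step ``an immediate consequence'' without elaboration.
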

\begin{proof} 
(i) and (ii) follow from a direct computation. 

For (iii), according to the definition, we can derive
\begin{equation*}
\langle u_{+}\rangle\langle u_{-}\rangle\gtrsim\left( 1+\big|x_3-(t+a)\big|\right)\left(1+\big|x_3+(t+a)\big|\right).
\end{equation*}
Since for all $x_3 \in \mathbb{R}$, at least one of the following inequalities 
\[|x_3-(t+a)|\geqslant \dfrac{|t+a|}{2}, \ \ |x_3+(t+a)|\geqslant \dfrac{|t+a|}{2}\]
holds. This yields \eqref{u+u-}.

For (iv) and (v), it suffices to check for $u_+$ because the inequalities for $u_-$ can be derived in the same manner. By the mean value theorem, we have
\begin{align*}
	|u_+(\tau,x)-a|&\leqslant|u_+(\tau,x')-a|+|u_+(\tau,x)-u_+(\tau,x')|\\
	&\leqslant|u_+(\tau,x')-a|+\|\nabla u_+\|_{L^\infty(\mathbb{R}^3)}|x-x'| \\
	&\leqslant|u_+(\tau,x')-a|+|x-x'|.
\end{align*}
Thus, for $|x-x'|\leqslant 2$, we have
\begin{align*}
\big(1+ |u_{+}-a|^2\big)^\frac{1}{2}(\tau,x)&\leqslant\big(1 + (|u_{+}-a|+2)^2\big)^\frac{1}{2}(\tau,x')\\
	&\leqslant 4\big(1 + |u_{+}-a|^2\big)^\frac{1}{2}(\tau,x')=4\langle u_{+}\rangle(\tau,x'),
\end{align*}
and therefore \eqref{eq:xleq2} is proved. \eqref{eq:xleq2coro} follows immediately from \eqref{eq:xleq2}.

For $|x-x'|\geqslant 1$, we apply the mean value theorem for $\langle u_{+}\rangle(\tau,x)$ and we obtain
\begin{align*}
	\langle u_{+}\rangle(\tau,x) &\leqslant \langle u_{+}\rangle(\tau,x')+\|\nabla\langle u_+\rangle\|_{L^\infty(\mathbb{R}^3)}|x-x'|\\
	 &\leqslant \langle u_{+}\rangle(\tau,x')+|x-x'|,
\end{align*}
which implies \eqref{eq:xgeq1}. \eqref{eq:xgeq1coro} is an immediate consequence. This completes the proof of the lemma.
\end{proof}

By virtue of the preceding two lemmas, we have the following weighted Sobolev inequalities:
\begin{Lemma}[Sobolev inequalities]\label{SI}
For all $k\leqslant N_*-2$ and multi-indices $\alpha$ with $|\alpha|=k$, we have 
\begin{equation}\label{eq:sobolev0}
\begin{split}
	\big|\langle u_{\pm}\rangle^{\omega}z_\mp\big|^2     &{\lesssim}  \big\|\langle u_{\pm}\rangle^{\omega}  z_\mp\big\|^2_{L^2}+\sum_{ |\beta|\leqslant 1} \big\|\langle u_{\pm}\rangle^{\omega} j_\mp^{(\beta)}\big\|^2_{L^2},\\
	\big|\langle u_{\pm}\rangle^{\omega}\nabla z^{(\alpha)}_\mp\big|^2     &{\lesssim}  \big\|\langle u_{\pm}\rangle^{\omega}  z_\mp\big\|^2_{L^2}+\sum_{ |\beta|\leqslant k+2} \big\|\langle u_{\pm}\rangle^{\omega} j_\mp^{(\beta)}\big\|^2_{L^2}.
\end{split}
\end{equation}
In particular, we have
\begin{equation}\label{eq:sobolev}
	\big|z_{\mp}\big|+\big|\nabla z_{\mp}^{(\alpha)}\big|\lesssim \frac{C_1 \varepsilon_{\mp}}{\langle u_{\pm}\rangle^\omega}, 
\end{equation}
where $C_1$ is the constant from \eqref{bootstrap assumption}.
\end{Lemma}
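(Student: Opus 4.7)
The plan is to combine the standard $H^2(\mathbb{R}^3) \hookrightarrow L^\infty(\mathbb{R}^3)$ Sobolev embedding with the weight-derivative estimates from Lemma \ref{lemma weights} and the weighted div-curl inequality \eqref{z to j inequality}. The key idea is to apply Sobolev embedding to the \emph{weighted} quantities $\langle u_\pm\rangle^\omega z_\mp$ and $\langle u_\pm\rangle^\omega \nabla z_\mp^{(\alpha)}$, rather than to $z_\mp$ alone, so that the weight factor is built into the $L^2$-norms appearing on the right-hand side. The observation that makes this work is that the weight passes through up to two spatial derivatives at no additional cost, by \eqref{differentiate weights coro}.

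For the first inequality in \eqref{eq:sobolev0}, I would begin with the pointwise bound
$$\bigl|\langle u_\pm\rangle^\omega z_\mp\bigr|^2 \;\lesssim\; \sum_{j=0}^{2} \bigl\|\nabla^{j}\bigl(\langle u_\pm\rangle^\omega z_\mp\bigr)\bigr\|_{L^2(\mathbb{R}^3)}^{2},$$
which is the standard Sobolev embedding. Leibniz combined with $|\nabla^l \langle u_\pm\rangle^\omega|\lesssim \langle u_\pm\rangle^\omega$ for $l=1,2$ reduces the right-hand side to $\sum_{l=0}^{2}\|\langle u_\pm\rangle^\omega \nabla^l z_\mp\|_{L^2}^2$. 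Applying \eqref{z to j inequality} with $\lambda=\langle u_\pm\rangle^{2\omega}$ (a valid choice, since $|\nabla \langle u_\pm\rangle^{2\omega}|\lesssim \langle u_\pm\rangle^{2\omega}$ by \eqref{differentiate weights coro}) then converts the $\nabla^l z_\mp$-terms for $l=1,2$ into weighted $L^2$-norms of $j_\mp^{(\beta)}$ with $|\beta|\leqslant 1$, which is precisely the form of the stated bound.

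The second inequality is structurally identical: I would apply the same Sobolev embedding to $\langle u_\pm\rangle^\omega \nabla z_\mp^{(\alpha)}$ with $|\alpha|=k$, use Leibniz together with \eqref{differentiate weights coro} to commute the (at most two) extra derivatives past the weight, and then invoke \eqref{z to j inequality} on $\nabla z_\mp^{(\alpha')}$ for $|\alpha'|\leqslant k+2$. This produces weighted $L^2$-norms of $j_\mp^{(\beta)}$ with $|\beta|\leqslant k+2$ on the right-hand side, which is legitimate because the hypothesis $k\leqslant N_*-2$ guarantees $k+2\leqslant N_*$, keeping us strictly within the range of derivatives handled by the bootstrap assumption. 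The pointwise estimate \eqref{eq:sobolev} then follows immediately by combining \eqref{eq:sobolev0} with the (refined) version of \eqref{bootstrap assumption}, which is designed precisely to control the quantity $E_\mp + \sum_{|\beta|\leqslant k+2} E_\mp^{|\beta|}$ by $(C_1 \varepsilon_\mp)^2$; dividing through by $\langle u_\pm\rangle^{2\omega}$ then yields the stated decay.

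The only genuinely delicate point is the index bookkeeping for the weight derivatives: one must verify that the product rule expansion of $\nabla^j(\langle u_\pm\rangle^\omega \nabla z_\mp^{(\alpha)})$ for $j\leqslant 2$ contains only terms where the weight factor retains its power $\omega$ up to a multiplicative constant. This is exactly what \eqref{differentiate weights coro} provides, and once it is in place the rest of the proof is a mechanical combination of Sobolev embedding, Leibniz, and the div-curl inequality \eqref{z to j inequality}. I anticipate no serious obstacle beyond tidy accounting.
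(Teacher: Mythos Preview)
Your proposal is correct and follows essentially the same approach as the paper: apply the $H^2\hookrightarrow L^\infty$ Sobolev embedding to the weighted quantity, use Leibniz together with \eqref{differentiate weights coro} to commute derivatives past $\langle u_\pm\rangle^\omega$, and then invoke the weighted div-curl inequality \eqref{z to j inequality} to replace $\nabla z_\mp^{(\gamma)}$ by $j_\mp^{(\beta)}$-norms. The paper's proof is exactly this, with the same index bookkeeping and the same use of the bootstrap assumption to obtain \eqref{eq:sobolev}.
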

\begin{proof} 
Since the proof for higher order derivatives is the same as that for $z_\mp$, we only give the details for the first inequality.  According to  the standard Sobolev inequality on $\mathbb{R}^3$, we have
\begin{align*}
	\big|\langle u_{\pm}\rangle^{\omega}z_\mp(t,x)\big|^2 &\lesssim  \big\|\langle u_{\pm}\rangle^{\omega} z_\mp \big\|_{H^2(\mathbb{R}^3)}^2= \sum_{|\beta|\leqslant 2}\big\|\D^\beta \big(\langle u_{\pm}\rangle^{\omega} z_\mp\big)\big\|^2_{L^2}
	\\
	&\leqslant \sum_{|\beta|\leqslant 2}\Big\|
	\sum_{\gamma \leqslant \beta}\nabla^{\gamma} \big(\langle u_{\pm}\rangle^{\omega}\big) z_\mp^{(\beta-\gamma)}\Big\|^2_{L^2}\\
	&\stackrel{\eqref{differentiate weights coro}}{\lesssim}\sum_{|\beta|\leqslant 2}\Big\| \sum_{\gamma \leqslant \beta} \langle u_{\pm}\rangle^{\omega} z_\mp^{(\beta-\gamma)}\Big\|^2_{L^2}
	\lesssim  \sum_{|\beta|\leqslant 2}\big\|\langle u_{\pm}\rangle^{\omega} z^{(\beta)}_\mp\big\|^2_{L^2}\\
    &\stackrel{\eqref{z to j inequality}}{\lesssim}\sum_{ |\beta|\leqslant 2} \Big(\big\|\langle u_{\pm}\rangle^{\omega}  z_\mp\big\|^2_{L^2}+\sum_{l=0}^{ |\beta|-1}\big\|\langle u_{\pm}\rangle^{\omega} j_\mp^{(l)}\big\|^2_{L^2}\Big).
\end{align*}
This gives the first inequality in \eqref{eq:sobolev0}. By virtue of \eqref{bootstrap assumption}, we have
\[	|z_{\mp}|\lesssim \frac{1}{\langle u_{\pm}\rangle^\omega}(E_{\mp}+E^0_{\mp}+E^1_{\mp})^{\frac{1}{2}}\lesssim\frac{C_1\varepsilon_{\mp}}{\langle u_{\pm}\rangle^\omega}.\]
This proves the lemma.
\end{proof}

As a corollary, we can measure the separation of $z_\pm$ in terms of decay in $t$.
\begin{Lemma}[Separation estimates]\label{SE}
	For all $\alpha$ and $\beta$ with $|\alpha|$, $|\beta|\leqslant N_*-1$, we have
	\begin{equation}\label{eq:separation}
	\big|z_+^{(\alpha)}(t,x)z_-^{(\beta)}(t,x)\big|\lesssim \frac{\big(C_1\big)^2\varepsilon_+\varepsilon_-}{\left(1+|t+a|\right)^\omega}.
	\end{equation}
\end{Lemma}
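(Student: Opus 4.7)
The proof is a direct two-step combination of the two preceding lemmas, and I do not expect any real obstacle. The plan is as follows.

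First, I would invoke the Sobolev inequality \eqref{eq:sobolev} from Lemma \ref{SI} to obtain pointwise decay of each factor. For $|\alpha|\leqslant N_*-1$, if $|\alpha|=0$ the first bound in \eqref{eq:sobolev} applies directly, and if $|\alpha|\geqslant 1$ I rewrite $z_+^{(\alpha)}=\nabla z_+^{(\alpha')}$ with $|\alpha'|=|\alpha|-1\leqslant N_*-2$ and apply the second bound in \eqref{eq:sobolev}. In either case,
\[
\big|z_+^{(\alpha)}(t,x)\big|\lesssim \frac{C_1\varepsilon_+}{\langle u_{-}\rangle^\omega},
\]
and the analogous estimate for $z_-^{(\beta)}$ with $|\beta|\leqslant N_*-1$ reads
\[
\big|z_-^{(\beta)}(t,x)\big|\lesssim \frac{C_1\varepsilon_-}{\langle u_{+}\rangle^\omega}.
\]
Note that the two factors decay in \emph{different} weights, one in $\langle u_-\rangle$ and the other in $\langle u_+\rangle$; this is the null/separation structure being exploited.

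Second, I would multiply the two estimates and use part (iii) of Lemma \ref{lemma weights}, namely $\langle u_+\rangle\langle u_-\rangle\gtrsim 1+|t+a|$, raised to the power $\omega$:
\[
\big|z_+^{(\alpha)}(t,x)\,z_-^{(\beta)}(t,x)\big|\lesssim \frac{(C_1)^2\varepsilon_+\varepsilon_-}{\langle u_+\rangle^\omega\langle u_-\rangle^\omega}=\frac{(C_1)^2\varepsilon_+\varepsilon_-}{\big(\langle u_+\rangle\langle u_-\rangle\big)^\omega}\lesssim \frac{(C_1)^2\varepsilon_+\varepsilon_-}{(1+|t+a|)^\omega},
\]
which is precisely \eqref{eq:separation}.

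There is essentially no obstacle: the heavy lifting was done in the proof of Lemma \ref{SI} (which in turn relied on the div-curl Lemma \ref{d-c} together with the bootstrap bound \eqref{bootstrap assumption}) and in the elementary weight inequality \eqref{u+u-}. The only mild point to mention is the role of the position parameter $a$: the bound is centered on $t=-a$ rather than on $t=0$, and this is exactly the reason that $a$ is introduced into the weights $\langle u_\pm\rangle$ in the first place, since it is this centering that will eventually be tuned to track the Alfv\'en wave packets when proving the rigidity theorems.
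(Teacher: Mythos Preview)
Your proposal is correct and follows exactly the same approach as the paper: apply the pointwise Sobolev bounds \eqref{eq:sobolev} to each factor and then invoke the weight inequality \eqref{u+u-}. The paper's own proof is in fact a two-line version of what you wrote.
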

\begin{proof} In view of \eqref{eq:sobolev}, we obtain
	\begin{equation*}
	\big|z_+^{(\alpha)}(t,x) z_-^{(\beta)}(t,x)\big|\lesssim\frac{\big(C_1\big)^2\varepsilon_+\varepsilon_-}{\langle u_{+}\rangle^\omega\langle u_{-}\rangle^\omega}.
	\end{equation*}
Combined with \eqref{u+u-}, we derive \eqref{eq:separation} immediately.
\end{proof}

We now prove bounds for a set of spacetime integrals which will be useful for energy estimates.
\begin{Lemma}\label{flux} For all $0\leqslant k\leqslant N_*$, we have
\begin{equation}\label{eq:flux}
\begin{split}
	&\int_{0}^{t^*}\int_{\Sigma_{\tau}}\frac{\langle u_{\mp}\rangle^{2\omega}}{\langle u_{\pm}\rangle^{\omega}}|z_{\pm}|^2\lesssim \big(C_1\big)^2\varepsilon_{\pm}^2,\\
	&\int_{0}^{t^*}\int_{\Sigma_{\tau}}\frac{\langle u_{\mp}\rangle^{2\omega}}{\langle u_{\pm}\rangle^{\omega}}\big|j^{(k)}_{\pm}\big|^2\lesssim \big(C_1\big)^2 \varepsilon_{\pm}^2,\\
	&\int_{0}^{t^*}\int_{\Sigma_{\tau}}\frac{\langle u_{\mp}\rangle^{2\omega}}{\langle u_{\pm}\rangle^{\omega}}\big|\nabla z^{(k)}_{\pm}\big|^2 \lesssim \big(C_1\big)^2\varepsilon_{\pm}^2.
	\end{split}
\end{equation} 
\end{Lemma}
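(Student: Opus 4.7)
The plan is to exploit the foliation of $[0,t^*]\times\mathbb{R}^3$ by the characteristic hypersurfaces $\{C^{\pm,t^*}_{u_\pm}\}_{u_\pm\in\mathbb{R}}$, together with the key fact that $\langle u_\pm\rangle$ is \emph{constant} along each such hypersurface. The integrability of $\int_{\mathbb R}\langle u_\pm\rangle^{-\omega}\,du_\pm$, which is finite precisely because $\omega=1+\delta>1$ (and is independent of $a$ by translation invariance), will convert the bootstrap flux bound into the desired spacetime bound.

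First I would establish the first inequality in the $+$ case. Applying the coarea formula on $\mathbb{R}^{1+3}$ to the function $u_+(t,x)=x_3-t$, which has $|\nabla_{t,x}u_+|=\sqrt{2}$, one has
\begin{equation*}
\int_0^{t^*}\!\!\int_{\Sigma_\tau}\frac{\langle u_-\rangle^{2\omega}}{\langle u_+\rangle^{\omega}}|z_+|^2 \,\lesssim\, \int_{\mathbb{R}}\frac{1}{\langle u_+\rangle^{\omega}}\biggl(\int_{C^{+,t^*}_{u_+}}\langle u_-\rangle^{2\omega}|z_+|^2\, d\sigma_+\biggr)\,du_+,
\end{equation*}
where I pulled $\langle u_+\rangle^{-\omega}$ out of the inner integral because it is constant on each $C^{+,t^*}_{u_+}$. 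The inner integral is exactly the flux $F_+(t^*,u_+)$, bounded by $F_+\leqslant 2(C_1)^2\varepsilon_+^2$ by the bootstrap assumption \eqref{bootstrap assumption}. Since $\omega>1$, the remaining one-dimensional integral $\int_\mathbb{R}\langle u_+\rangle^{-\omega}du_+$ is a finite universal constant, and the desired bound $\lesssim (C_1)^2\varepsilon_+^2$ follows. The second inequality proceeds by the identical argument after replacing $|z_+|^2$ with $|j_+^{(k)}|^2$ and $F_+$ with $F_+^{k}$, both supplied by \eqref{bootstrap assumption}.

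For the third inequality, I apply Lemma \ref{d-c} pointwise on each time slice $\Sigma_\tau$ with the weight $\lambda = \langle u_-\rangle^{2\omega}/\langle u_+\rangle^{\omega}$; the requisite estimate $|\nabla\lambda|\lesssim\lambda$ is immediate from the Leibniz rule combined with \eqref{differentiate weights coro}. The iterated div-curl inequality \eqref{z to j inequality} then yields
\begin{equation*}
\int_{\Sigma_\tau}\frac{\langle u_-\rangle^{2\omega}}{\langle u_+\rangle^{\omega}}|\nabla z_+^{(k)}|^2 \,\lesssim\, \int_{\Sigma_\tau}\frac{\langle u_-\rangle^{2\omega}}{\langle u_+\rangle^{\omega}}|z_+|^2 + \sum_{m=0}^{k}\int_{\Sigma_\tau}\frac{\langle u_-\rangle^{2\omega}}{\langle u_+\rangle^{\omega}}|j_+^{(m)}|^2,
\end{equation*}
and integrating in $\tau\in[0,t^*]$ reduces the third estimate to the first two. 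The $-$ case is identical by symmetry.

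I expect the proof to be essentially routine once the coarea identity is set up: the only genuine input is the integrability gain from $\omega>1$, which is precisely the structural reason for introducing the perturbation $\delta$ in the weights; without it the spacetime integrals would in general diverge. One minor technical point is that the weight $\lambda$ may fall below $1$ when $\langle u_-\rangle$ is close to $1$ and $\langle u_+\rangle$ is large, whereas Lemma \ref{d-c} is stated assuming $\lambda\geqslant 1$; however, since the proof of that lemma only genuinely uses $|\nabla\lambda|\lesssim\lambda$, one may either observe that the restriction $\lambda\geqslant 1$ is unnecessary or apply the lemma to $\lambda+1$ and absorb the harmless extra term using the fact that $\int_{\Sigma_\tau}|z_+|^2\leqslant E_+(t)\lesssim(C_1)^2\varepsilon_+^2$ (already inherent in the energy norm).
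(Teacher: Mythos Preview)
Your proof is correct and follows essentially the same route as the paper: foliate the spacetime slab by the characteristic hypersurfaces $C^{+,t^*}_{u_+}$, pull out the constant factor $\langle u_+\rangle^{-\omega}$, bound the inner integral by the flux $F_+$ (respectively $F_+^k$), and use $\int_{\mathbb{R}}\langle u_+\rangle^{-\omega}\,du_+<\infty$; then reduce the third estimate to the first two via the iterated div-curl inequality \eqref{z to j inequality} with weight $\lambda=\langle u_-\rangle^{2\omega}/\langle u_+\rangle^{\omega}$. Your remark about the hypothesis $\lambda\geqslant 1$ in Lemma~\ref{d-c} is a fair observation that the paper glosses over, and your proposed fixes are adequate.
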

\begin{proof}
By the symmetry considerations, we only give the details for the estimates on  $z_+$ (and its higher order derivatives). If we parametrize $C_{u_+}^{+,t}$ by $(x_1,x_2,t)$, the surface measure $d\sigma_+$ can be written as 
\[d\sigma_+=\sqrt{2}d{x_1}d{x_2}dt,\]
therefore, we have
	\begin{align*}
	\int_{0}^{t^*}\int_{\Sigma_{\tau}}\frac{\langle u_{-}\rangle^{2\omega}}{\langle u_{+}\rangle^{\omega}}|z_{+}|^2dxd\tau
	&\lesssim\int_{u_{+}}\Big(\int_{C^{+,t^*}_{u_{+}}}\frac{\langle u_{-}\rangle^{2\omega}}{\langle u_{+}\rangle^{\omega}}|z_{+}|^2 d\sigma_{+}\Big)du_{+}\\
	&\lesssim \sup_{u_{+}\in \mathbb{R}}\Big[\int_{C^{+,t^*}_{u_{+}}}\langle u_{-}\rangle^{2\omega}|z_{+}|^2d\sigma_{+}\Big]\int_{\mathbb{R}}\frac{1}{\langle u_{+}\rangle^\omega}du_{+}\\
	&\lesssim F_{+}\lesssim \big(C_1\big)^2\varepsilon_{+}^2.
	\end{align*}
	In the last step, we used the fact that $\displaystyle \int_{\mathbb{R}}\frac{1}{\langle u_{+}\rangle^\omega}du_{+}$ is bounded by a universal constant which is independent of $a$. The bound on $\displaystyle \int_{0}^{t^*}\int_{\Sigma_{\tau}}\frac{\langle u_{-}\rangle^{2\omega}}{\langle u_{+}\rangle^{\omega}}\big|j^{(k)}_{+}\big|^2dxd\tau$ can be derived exactly in the same way. 
	
For the last inequality, by combining div-curl lemma and the previous two estimates,  we have 
	\begin{align*}
	\int_{0}^{t^*}\int_{\Sigma_{\tau}}\frac{\langle u_{-}\rangle^{2\omega}}{\langle u_{+}\rangle^{\omega}}\big|\nabla z^{(k)}_{+}\big|^2dxd\tau
	&\stackrel{\eqref{z to j inequality}}{\lesssim}
	\int_{0}^{t^*}\bigg(\Big\|\frac{\langle u_{-}\rangle^{\omega}}{\langle u_{+}\rangle^{\frac{\omega}{2}}}z_{+}\Big\|_{L^2(\Sigma_\tau)}^2
	+\sum_{l=0}^k\Big\|\frac{\langle u_{-}\rangle^{\omega}}{\langle u_{+}\rangle^{\frac{\omega}{2}}}j^{(l)}_{+}\Big\|_{L^2(\Sigma_\tau)}^2\bigg)d\tau\\
	&\lesssim \big(C_1\big)^2\varepsilon_{+}^2.
	\end{align*}
We remark that, thanks to \eqref{differentiate weights coro}, the weight functions used above satisfy the conditions of the div-curl lemma. The proof of the lemma is now complete.
\end{proof}

\smallskip

We end this subsection by an abstract energy estimate for the following linear system of equations:
\begin{equation}\label{eq:LS}\begin{cases}
		& \D_{t}f_{+}-B_{0}\cdot \nabla f_{+}  =\rho_{+},\\
		& \D_{t}f_{-}+B_{0}\cdot \nabla f_{-}  =\rho_{-}.
	\end{cases}\end{equation}
We remark that $f_\pm$ and $\rho_\pm$ are smooth vector fields defined on $\mathbb{R}\times \mathbb{R}^3$ with sufficiently fast decay in $x$-variables.
\begin{Proposition}
For all weight functions $\lambda_{\pm}$ defined on $[0,t^*]\times\mathbb{R}^3$
with the properties $L_{\pm}\lambda_{\mp}=0$, where $L_{\pm}=\partial_t\pm \partial_{x_3}$, for all $t\in [0,t^*]$, we have 
\begin{equation}\label{eq:LEE}
	\int_{\Sigma_{t}}\lambda_{\pm}|f_{\pm}|^2 dx +\frac{\sqrt{2}}{2}\sup_{u_{\pm}}\int_{C_{u_{\pm}}^{\pm,t}}\lambda_{\pm}|f_{\pm}|^2 d\sigma_{\pm}\leqslant  \int_{\Sigma_{0}}\lambda_{\pm}|f_{\pm}|^2 dx +2\Big|\int_{0}^{t}\int_{\Sigma_{\tau}}\lambda_{\pm}f_{\pm}\cdot\rho_{\pm}dxd\tau\Big|.
\end{equation}
\end{Proposition}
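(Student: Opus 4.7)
The plan is to derive \eqref{eq:LEE} by applying the divergence theorem in spacetime to a pointwise identity obtained from \eqref{eq:LS}. The first equation in \eqref{eq:LS} reads $L_{-}f_{+}=\rho_{+}$ and the second $L_{+}f_{-}=\rho_{-}$, so for $f_{\pm}$ the transport operator is $L_{\mp}=\partial_{t}\mp\partial_{x_{3}}$. Contracting the $f_{\pm}$-equation with $2\lambda_{\pm}f_{\pm}$ and using the hypothesis $L_{\pm}\lambda_{\mp}=0$ (equivalently $L_{\mp}\lambda_{\pm}=0$ after relabeling) to pull the weight through the transport derivative, I would obtain
\[L_{\mp}\bigl(\lambda_{\pm}|f_{\pm}|^{2}\bigr)=2\lambda_{\pm}f_{\pm}\cdot\rho_{\pm},\qquad \text{i.e.,}\qquad \partial_{t}\bigl(\lambda_{\pm}|f_{\pm}|^{2}\bigr)\mp\partial_{x_{3}}\bigl(\lambda_{\pm}|f_{\pm}|^{2}\bigr)=2\lambda_{\pm}f_{\pm}\cdot\rho_{\pm}.\]
This is the $(t,x)$-divergence of the spacetime vector field with components $\bigl(\lambda_{\pm}|f_{\pm}|^{2},0,0,\mp\lambda_{\pm}|f_{\pm}|^{2}\bigr)$, and the rest of the argument reduces to applying the divergence theorem on two different regions.

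First I would integrate over the full slab $[0,t]\times\mathbb{R}^{3}$. The assumed spatial decay of $f_{\pm}$ kills the $\partial_{x_{3}}$-contribution at $|x|=\infty$, leaving the bulk identity
\[\int_{\Sigma_{t}}\lambda_{\pm}|f_{\pm}|^{2}\,dx - \int_{\Sigma_{0}}\lambda_{\pm}|f_{\pm}|^{2}\,dx = 2\int_{0}^{t}\!\!\int_{\Sigma_{\tau}}\lambda_{\pm}f_{\pm}\cdot\rho_{\pm}\,dx\,d\tau,\]
which controls the $\Sigma_{t}$-term on the left of \eqref{eq:LEE}. To pick up the flux term I would, for each $u_{\pm,0}\in\mathbb{R}$, apply the divergence theorem on
\[\Omega_{u_{\pm,0}} = \bigl\{(\tau,x)\in[0,t]\times\mathbb{R}^{3}:\pm(u_{\pm}(\tau,x)-u_{\pm,0})\geqslant 0\bigr\},\]
whose boundary consists of the truncated slices $\Sigma_{0}\cap\Omega_{u_{\pm,0}}$ and $\Sigma_{t}\cap\Omega_{u_{\pm,0}}$ together with the characteristic piece $C^{\pm,t}_{u_{\pm,0}}$. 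The outward Euclidean unit normal on $C^{\pm,t}_{u_{\pm,0}}$ is parallel to $\mp\nabla u_{\pm}$, and after contracting with the divergence-identity vector field and matching the Euclidean surface element to $d\sigma_{\pm}=\sqrt{2}\,dt\,dx_{1}\,dx_{2}$, the flux appears on the same side as the $\Sigma_{t}$-term with a favorable sign, yielding
\[\int_{\Sigma_{t}\cap\Omega_{u_{\pm,0}}}\!\!\lambda_{\pm}|f_{\pm}|^{2} + \sqrt{2}\int_{C^{\pm,t}_{u_{\pm,0}}}\!\!\lambda_{\pm}|f_{\pm}|^{2}\,d\sigma_{\pm} = \int_{\Sigma_{0}\cap\Omega_{u_{\pm,0}}}\!\!\lambda_{\pm}|f_{\pm}|^{2} + 2\int_{\Omega_{u_{\pm,0}}}\!\!\lambda_{\pm}f_{\pm}\cdot\rho_{\pm}.\]

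To finish, I would discard the non-negative $\Sigma_{t}\cap\Omega_{u_{\pm,0}}$-term, enlarge $\Sigma_{0}\cap\Omega_{u_{\pm,0}}$ to $\Sigma_{0}$ and $\Omega_{u_{\pm,0}}$ to $[0,t]\times\mathbb{R}^{3}$, and take the supremum over $u_{\pm,0}$ to get a flux bound uniform in $u_{\pm,0}$. Combining this with the bulk identity (with the appropriate rescaling to produce the coefficient $\sqrt{2}/2$) gives \eqref{eq:LEE}. The only delicate point is the geometric bookkeeping in the flux step: choosing the correct orientation of $\Omega_{u_{\pm,0}}$ so that the flux enters with positive sign (for the $+$ case take $\{u_{+}\geqslant u_{+,0}\}$, for the $-$ case take $\{u_{-}\leqslant u_{-,0}\}$) and tracking the factor $\sqrt{2}$ coming from the ratio of the Euclidean surface measure to $d\sigma_{\pm}$. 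The commutation identity $L_{\mp}\lambda_{\pm}=0$ — which removes the weight entirely from the transport operator — is the reason that the whole scheme works and is precisely why this class of weights is singled out in the hypothesis.
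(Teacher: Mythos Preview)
Your argument is correct and follows exactly the approach the paper sketches: multiply \eqref{eq:LS} by $2\lambda_\pm f_\pm$, use $L_\mp\lambda_\pm=0$ to obtain the conservation law $L_\mp(\lambda_\pm|f_\pm|^2)=2\lambda_\pm f_\pm\cdot\rho_\pm$, and apply the divergence theorem on the region bounded by $\Sigma_0$, $\Sigma_t$, and $C^{\pm,t}_{u_\pm}$. The only step to be careful with is ``enlarging $\Omega_{u_{\pm,0}}$ to $[0,t]\times\mathbb{R}^3$'' for the source term, since $\bigl|\int_{\Omega}\lambda_\pm f_\pm\cdot\rho_\pm\bigr|$ is not in general bounded by $\bigl|\int_{[0,t]\times\mathbb{R}^3}\lambda_\pm f_\pm\cdot\rho_\pm\bigr|$; this is harmless because in every application the paper immediately replaces the right-hand side by $\int_0^t\!\int_{\Sigma_\tau}\lambda_\pm|f_\pm||\rho_\pm|$, for which the enlargement is valid, and the paper explicitly remarks that the precise constants in \eqref{eq:LEE} (other than the coefficient $1$ on $\int_{\Sigma_0}$) are irrelevant.
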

The idea is to multiply the first and second equations by $\lambda_{+} f_{+}$ and $\lambda_{-} f_{-}$ respectively and then integrate by parts on regions bounded by $\Sigma_0$, $\Sigma_t$ and $C_{u_{\pm}}^{\pm,t}$. We refer the readers to Section 2.3 of \cite{He-Xu-Yu} for details of the proof. We remark here that except for the coefficients of the first terms on the both sides of \eqref{eq:LEE}, the exactly numerical constants are irrelevant to the rest of the proof.

\subsection{The bound on pressure and lowest order energy estimates}
We begin to prove \eqref{bootstrap assumption improved}. The proof is divided into three steps. The current subsection is the first step which is devoted to controlling the size of $E_\pm$ and $F_\pm$.

We apply \eqref{eq:LEE} to \eqref{eq:MHD}, i.e., we specialize \eqref{eq:LEE} to: 
\[f_{\pm}=z_{\pm},\  \rho_{\pm}=-\nabla p-z_{\mp}\cdot\nabla z_{\pm},\  \lambda_{\pm}=\langle u_{\mp}\rangle^{2\omega}.\]
Hence,
\begin{equation*}
  \int_{\Sigma_{t}}\langle u_{\mp}\rangle^{2\omega}|z_{\pm}|^2 +\sup_{u_{\pm}\in \mathbb{R}}\int_{C_{u_{\pm}}^{\pm,t}}\langle u_{\mp}\rangle^{2\omega}|z_{\pm}|^2 
  \lesssim \int_{\Sigma_{0}}\langle u_{\mp}\rangle^{2\omega}|z_{\pm}|^2 +\int_{0}^{t}\int_{\Sigma_{\tau}}\langle u_{\mp}\rangle^{2\omega}|z_{\pm}| |\nabla p+z_{\mp}\cdot\nabla z_{\pm}|,
\end{equation*}
which implies that
\begin{equation}\label{eq:LOW}
  E_{\pm}(t)+F_{\pm}(t)\lesssim E_{\pm}(0)+\underbrace{\int_{0}^{t}\int_{\Sigma_{\tau}}\langle u_{\mp}\rangle^{2\omega}|z_{\pm}| |\nabla p|}_{\mathbf{I}_{\bm{\pm}}}+\underbrace{\int_{0}^{t}\int_{\Sigma_{\tau}}\langle u_{\mp}\rangle^{2\omega}|z_{\pm}| |z_{\mp}| |\nabla z_{\pm}|}_{\mathbf{J}_{\bm{\pm}}}.
\end{equation}
By symmetry considerations, to bound $\mathbf{J}_{\bm{\pm}}$, it suffices to control $\mathbf{J}_{\bm{+}}$. Thanks to H\"older inequality, we have
\begin{equation}\label{the term J'}\begin{split}
  \mathbf{J}_{\bm{+}}&\leqslant \Big(\int_{0}^{t}\int_{\Sigma_{\tau}}\!\!\!\frac{\langle u_{-}\rangle^{2\omega}}{\langle u_{+}\rangle^{\omega}}|z_{+}|^2\!\Big)^{\frac{1}{2}}\Big(\int_{0}^{t}\int_{\Sigma_{\tau}}\!\!\langle u_{-}\rangle^{2\omega}\langle u_{+}\rangle^\omega |z_{-}|^2 |\nabla z_{+}|^2\Big)^{\frac{1}{2}}\\
  & \stackrel{\eqref{eq:flux}}{\lesssim}C_1\varepsilon_+\Big(\underbrace{\int_{0}^{t}\int_{\Sigma_{\tau}}\!\!\langle u_{-}\rangle^{2\omega}\langle u_{+}\rangle^\omega |z_{-}|^2 |\nabla z_{+}|^2}_{\mathbf{J'}}\Big)^{\frac{1}{2}}.
\end{split}\end{equation}
We observe that $\mathbf{J'}$ can be bounded as follows:
\begin{equation*}\begin{split}
  \mathbf{J'}&\leqslant \int_{0}^{t} \big\|\langle u_{+}\rangle^{\omega}z_{-}\big\|_{L^\infty(\Sigma_\tau)}^2\Big\|\frac{\langle u_{-}\rangle^{\omega}}{\langle u_{+}\rangle^{\frac{\omega}{2}}}\nabla z_{+}\Big\|_{L^2(\Sigma_\tau)}^2\\
  &\stackrel{\eqref{eq:sobolev}}{\lesssim}\big(C_1\big)^2\varepsilon_-^2\int_{0}^{t}\Big\|\frac{\langle u_{-}\rangle^{\omega}}{\langle u_{+}\rangle^{\frac{\omega}{2}}}\nabla z_{+}\Big\|_{L^2(\Sigma_\tau)}^2.
\end{split}\end{equation*}
In view of \eqref{eq:flux}, we can bound the last term above using flux norm and hence
\begin{equation}\label{final estimate on J'}
	\mathbf{J'}\lesssim\big(C_1\big)^4\varepsilon_+^2\varepsilon_-^2.
\end{equation}
We conclude that
\begin{equation}\label{eq:J+}
\mathbf{J}_{\bm{+}}\lesssim \big(C_1\big)^3\varepsilon_+^2\varepsilon_-.
\end{equation}

It remains to bound the pressure terms $\mathbf{I}_{\bm{\pm}}$. Once again, it suffices to bound  $\mathbf{I}_{\bm{+}}$.  First of all, we have
\begin{equation}\label{the term I'}
\begin{split}
   \mathbf{I}_{\bm{+}} &\leqslant \Big(\int_{0}^{t}\int_{\Sigma_{\tau}}\frac{\langle u_{-}\rangle^{2\omega}}{\langle u_{+}\rangle^{\omega}}|z_{+}|^2\Big)^{\frac{1}{2}}
   \Big(\int_{0}^{t}\int_{\Sigma_{\tau}}\langle u_{-}\rangle^{2\omega}\langle u_{+}\rangle^\omega |\nabla p|^2 \Big)^{\frac{1}{2}}\\
   &\stackrel{\eqref{eq:flux}}{\lesssim}C_1 \varepsilon_+\Big(\underbrace{\int_{0}^{t}\int_{\Sigma_{\tau}}\langle u_{-}\rangle^{2\omega}\langle u_{+}\rangle^\omega |\nabla p|^2 }_{\mathbf{I'}}\Big)^{\frac{1}{2}}.
   \end{split}
\end{equation}
We need to study $\nabla p$ in order to bound $\mathbf{I'}$. Since div$ z_{\pm}=0$,  the divergence of the first equation in \eqref{eq:MHD} yields 
\begin{equation*}
  -\Delta p=\D_{i}z^{j}_{-}\D_{j}z^{i}_{+}.
\end{equation*}
Therefore, by use of the Newtonian potential, on each time slice $\Sigma_{\tau}$ we have the following decomposition:
\begin{align*}
  \nabla p(\tau,x)&=
   \frac{1}{4\pi}\nabla\int_{\mathbb{R}^3}\frac{1}{|x-x'|}\cdot\big(\D_{i}z^{j}_{-}\D_{j}z^{i}_{+}\big)(\tau,x')dx'\\
   &=\frac{1}{4\pi}\int_{\mathbb{R}^3}\nabla\frac{1}{|x-x'|}\cdot \theta(|x-x'|)\cdot \big(\D_{i}z^{j}_{-}\D_{j}z^{i}_{+}\big)(\tau,x')dx'\\
   &\ \ +\frac{1}{4\pi}\int_{\mathbb{R}^3}\nabla\frac{1}{|x-x'|}\cdot \big(1-\theta(|x-x'|)\big)\cdot\big(\D_{i}z^{j}_{-}\D_{j}z^{i}_{+}\big)(\tau,x')dx',
\end{align*}
where the smooth cut-off function $\theta(r)$ is chosen so that $\theta(r)=1$ for $r\leqslant 1$ and $\theta(r)=0$ for $r\geqslant 2$. In view of the fact that $\operatorname{div} z_\pm=0$,  integration by parts gives
\begin{equation}\label{eq:nabla p}
\begin{split}
   \nabla p(\tau,x)&=
   \frac{1}{4\pi}\int_{\mathbb{R}^3}\nabla\frac{1}{|x-x'|}\cdot\theta(|x-x'|)\cdot\big(\D_{i}z^{j}_{-}\D_{j}z^{i}_{+}\big)(\tau,x')dx'\\
   &\ \ +\frac{1}{4\pi}\int_{\mathbb{R}^3}\D_{j}\D_{i}\Big(\nabla\frac{1}{|x-x'|} \cdot\big(1-\theta(|x-x'|)\big)\Big)\cdot\big(z^{j}_{-}z^{i}_{+}\big)(\tau,x')dx'.
   \end{split}
\end{equation}
In view of the property of the cut-off function $\theta(r)$, we can bound $\nabla p$ by
\begin{align*}
\!\!|\nabla p(\tau,x)|
&\lesssim \underbrace{\int_{|x-x'|\leqslant 2}\!\!\!\!\!\!\!\!\!\!\!\!\!\!\frac{|(\nabla z_{-}\cdot\nabla z_{+})(\tau,x')|}{|x-x'|^2}dx'}_{\mathbf{A_1}}+\underbrace{\int_{|x-x'|\geqslant 1}\!\!\!\!\!\!\!\!\!\!\frac{|(z_{-}\cdot z_{+})(\tau,x')|}{|x-x'|^4}dx'}_{\mathbf{A_2}} +\underbrace{\int_{1\leqslant |x-x'|\leqslant 2}\!\! {|(z_{-}\cdot z_{+})(\tau,x')|}dx'}_{\mathbf{A_3}}\!\!.\stepcounter{equation}\tag{\theequation}\label{decomposition of nabla p}
\end{align*}
It follows that 
\begin{equation}\label{eq:I'}
\mathbf{I'} \lesssim 
\underbrace{\int_{0}^{t}\int_{\Sigma_{\tau}}\langle u_{-}\rangle^{2\omega}\langle u_{+}\rangle^\omega |\mathbf{A_1}|^2}_{\mathbf{I_1}}
+\underbrace{\int_{0}^{t}\int_{\Sigma_{\tau}}\langle u_{-}\rangle^{2\omega}\langle u_{+}\rangle^\omega |\mathbf{A_2}|^2}_{\mathbf{I_2}}
+\underbrace{\int_{0}^{t}\int_{\Sigma_{\tau}}\langle u_{-}\rangle^{2\omega}\langle u_{+}\rangle^\omega |\mathbf{A_3}|^2}_{\mathbf{I_3}}.
\end{equation}
For $\mathbf{I_1}$, according to  the definition of $\mathbf{A_1}$, we have
\begin{align*}
\langle u_-\rangle^\omega\langle u_+\rangle^{\frac{\omega}{2}}|\mathbf{A_1}|
&=
\int_{|x-x'|\leqslant 2}\frac{\big(\langle u_-\rangle^\omega\langle u_+\rangle^{\frac{\omega}{2}}\big)(\tau,x)|(\nabla z_{-}\cdot\nabla z_{+})(\tau,x')|}{|x-x'|^2}dx'\\
&\stackrel{\eqref{eq:xleq2coro}}{\lesssim}
 \int_{|x-x'|\leqslant 2}\frac{\big(\langle u_-\rangle^\omega\langle u_+\rangle^{\frac{\omega}{2}}\big)(\tau,x')|(\nabla z_{-}\cdot\nabla z_{+})(\tau,x')|}{|x-x'|^2}dx'\\
&\leqslant \big\|\langle u_{+}\rangle^{\omega}\nabla z_-\big\|_{L^\infty}\int_{|x-x'|\leqslant2}\frac{\langle u_{-}\rangle^{{\omega}}(\tau,x')|\nabla z_+(\tau,x')|}{\langle u_{+}\rangle^{\frac{\omega}{2}}(\tau,x')|x-x'|^2}dx'\\
&\stackrel{\eqref{eq:sobolev}}{\lesssim}
C_1 \varepsilon_-\int_{|x-x'|\leqslant2}\frac{1}{|x-x'|^2}\frac{\langle u_{-}\rangle^{\omega}(\tau,x')}{\langle u_{+}\rangle^{\frac{\omega}{2}}(\tau,x')} |\nabla z_+(\tau,x')|dx'.
\end{align*}
We note that $\dfrac{1}{|x|^2}\text{\Large$\text{\Large$\chi$}$}_{|x|\leqslant 2}\in L^1(\mathbb{R}^3)$ where
$\text{\Large$\text{\Large$\chi$}$}_{|x|\leqslant 2}$ is the characteristic function of the ball of radius $2$ centered at the origin. Thanks to Young's inequality, we have
\begin{align*}
	\big\|\langle u_-\rangle^\omega\langle u_+\rangle^{\frac{\omega}{2}}\mathbf{A_1}\big\|^2_{L^2(\Sigma_\tau)}
	&\lesssim\big(C_1\big)^2\varepsilon_-^2\Big\|\frac{1}{|x|^2}\text{\Large$\text{\Large$\chi$}$}_{|x|\leqslant 2}\Big\|^2_{L^1(\Sigma_\tau)} \Big\|\frac{\langle u_{-}\rangle^{\omega}}{\langle u_{+}\rangle^{\frac{\omega}{2}}}\nabla z_{+}\Big\|^2_{L^2(\Sigma_\tau)}\\
	&\lesssim\big(C_1\big)^2\varepsilon_-^2\Big\|\frac{\langle u_{-}\rangle^{\omega}}{\langle u_{+}\rangle^{\frac{\omega}{2}}}
	\nabla z_{+}\Big\|^2_{L^2(\Sigma_\tau)}.
\end{align*}
Thus,  we can apply \eqref{eq:flux} to derive
\begin{equation}\label{eq:I1}
\mathbf{I_{1}} \lesssim \big(C_1\big)^4\varepsilon_+^2\varepsilon_-^2.
\end{equation}
We remark that $\mathbf{I_3}$ can be treated exactly in the same manner. So we have
\begin{equation}\label{eq:I3}
\mathbf{I_3}\lesssim \big(C_1\big)^4\varepsilon_+^2\varepsilon_-^2.
\end{equation}
For $\mathbf{I_2}$, we have
\begin{align*}
\mathbf{I_2}
&= 
\int_{0}^{t}\Big\|\langle u_{-}\rangle^{\omega}(\tau,x)\langle u_{+}\rangle^{\frac{\omega}{2}}(\tau,x)\int_{|x-x'|\geqslant 1}\frac{|(z_{-}\cdot z_{+})(\tau,x')|}{|x-x'|^4}dx'\Big\|_{L^2(\Sigma_\tau)}^2\\
&\stackrel{\eqref{eq:xgeq1coro}}{\lesssim} \int_{0}^{t}\Big\|\int_{|x-x'|\geqslant 1}\Big(\big(\langle u_{-}\rangle^{\omega}\langle u_{+}\rangle^{\frac{\omega}{2}}\big)(\tau,x')+|x-x'|^{\frac{3\omega}{2}}\Big)\frac{|(z_{-}\cdot z_{+})(\tau,x')|}{|x-x'|^4}dx'\Big\|_{L^2(\Sigma_\tau)}^2\\
&\lesssim
\underbrace{\int_{0}^{t}\Big\|\int_{|x-x'|\geqslant 1}\!\!\!\!\!\!\big(\langle u_{-}\rangle^{\omega}\langle u_{+}\rangle^{\frac{\omega}{2}}\big)(\tau,x')\frac{|(z_{-}\cdot z_{+})(\tau,x')|}{|x-x'|^4}dx'\Big\|_{L^2(\Sigma_\tau)}^2}_{\mathbf{I_{21}}}+\underbrace{\int_{0}^{t}\Big\|\int_{|x-x'|\geqslant 1}\!\!\!\!\!\frac{|(z_{-}\cdot z_{+})(\tau,x')|}{|x-x'|^{4-\frac{3\omega}{2}}}dx'\Big\|_{L^2(\Sigma_\tau)}^2}_{\mathbf{I_{22}}}.
\end{align*}
For $\mathbf{I_{21}}$, since $\dfrac{1}{|x|^4}\text{\Large$\chi$}_{|x|\geqslant 1}\in L^1(\mathbb{R}^3)$, we have
\begin{align*}
\mathbf{I_{21}}&
\stackrel{\text{Young's}}{\lesssim}
\int_{0}^{t}\Big\|\frac{1}{|x|^4}\text{\Large$\chi$}_{|x|\geqslant 1}\Big\|_{L^1(\Sigma_\tau)}^2 \big\|\langle u_{+}\rangle^{\omega}z_{-}\big\|_{L^\infty(\Sigma_\tau)}^2\Big\|\frac{\langle u_{-}\rangle^{\omega}}{\langle u_{+}\rangle^{\frac{\omega}{2}}}
z_{+}\Big\|_{L^2(\Sigma_\tau)}^2 \\
&\stackrel{\eqref{eq:sobolev}}{\lesssim}(C_1
 \varepsilon_-)^2\int_{0}^{t}\Big\|\frac{\langle u_{-}\rangle^{\omega}}{\langle u_{+}\rangle^{\frac{\omega}{2}}}z_{+}\Big\|_{L^2(\Sigma_\tau)}^2\\
 &\stackrel{\eqref{eq:flux}}{\lesssim} \big(C_1\big)^4\varepsilon_+^2\varepsilon_-^2.
\end{align*}
For $\mathbf{I_{22}}$, since $\dfrac{1}{|x|^{4-\frac{3\omega}{2}}}\text{\Large$\chi$}_{|x|\geqslant 1}\in L^2(\mathbb{R}^3)$ when $\omega\in(1,\frac{5}{3})$ (this is the place where we have constraints on $\delta$), we have
\begin{align*}
\mathbf{I_{22}}&
\stackrel{\text{Young's}}{\lesssim}
\int_{0}^{t}\Big\|\frac{1}{|x|^{4-\frac{3\omega}{2}}}\text{\Large$\chi$}_{|x|\geqslant 1}\Big\|_{L^2(\Sigma_\tau)}^2\big\|z_{-}z_{+}\big\|_{L^1(\Sigma_\tau)}^2\\
&\lesssim \int_{0}^{t}\big\|z_{-} z_{+}\big\|_{L^1(\Sigma_\tau)}^2.
\end{align*}
Since \eqref{u+u-}, we have $\langle u_{+}\rangle\langle u_{-}\rangle\gtrsim 1+|\tau+a|$ and then
\begin{align*}
\mathbf{I_{22}}&\lesssim  \int_{0}^{t}\Big\|\frac{1}{\langle u_{+}\rangle^{\omega}\langle u_{-}\rangle^{\omega}}\langle u_{+}\rangle^{\omega}z_{-}\langle u_{-}\rangle^{\omega} z_{+}\Big\|_{L^1(\Sigma_\tau)}^2\\
&\lesssim \int_{0}^{t}\frac{1}{\big(1+|\tau+a|\big)^{2\omega}}\big\|\langle u_{+}\rangle^{\omega}z_{-}\big\|_{L^2(\Sigma_\tau)}^2\big\|\langle u_{-}\rangle^{\omega}z_{+}\big\|_{L^2(\Sigma_\tau)}^2\\
&\lesssim \big(C_1\big)^2\varepsilon_{-}^2\big(C_1\big)^2\varepsilon_{+}^2\int_{0}^{t}\frac{1}{\big(1+|\tau+a|\big)^{2\omega}}d\tau\lesssim \big(C_1\big)^4\varepsilon_+^2\varepsilon_-^2.
\end{align*}
Consequently, we obtain
\begin{equation}\label{eq:I2}
\mathbf{I_2}\lesssim \big(C_1\big)^4\varepsilon_+^2\varepsilon_-^2.
\end{equation}
By putting \eqref{eq:I1}, \eqref{eq:I3} and \eqref{eq:I2} together, in view of \eqref{eq:I'}, we get
\begin{equation}\label{final estimate on I'}
\mathbf{I'}\lesssim \big(C_1\big)^4\varepsilon_+^2\varepsilon_-^2.
\end{equation}
This leads to
\begin{equation*}
\mathbf{I}_{\bm{+}}\lesssim \big(C_1\big)^3\varepsilon_+^2\varepsilon_-.
\end{equation*}
Combined with \eqref{eq:LOW} and \eqref{eq:J+}, we take supremum over all $t\in [0,t^*]$ and then obtain
\begin{equation}\label{eq:LOWEE}
E_{\pm}+F_{\pm}\leqslant C_0 E_{\pm}(0)+C_0\big(C_1\big)^3\varepsilon_\pm^2\varepsilon_\mp,
\end{equation}
where $C_0$ is a universal constant and the constant $C_1$ is from the bootstrap assumption \eqref{bootstrap assumption}. We remark that the universal constant $C_0$ will appear many times in the sequel and they may be different. 

\subsection{The higher order energy bounds}

To derive higher order energy estimates, we commute derivatives with the vorticity equations. For a given multi-index $\beta$ with $0\leqslant|\beta|\leqslant N_*$, we  apply $\D^{\beta}$ to \eqref{eq:curlMHD-2} and we obtain
\begin{equation}\begin{cases}\label{eq:DcurlMHD-2}
\D_t j^{(\beta)}_{+}-B_{0}\cdot\nabla j^{(\beta)}_{+}&=\rho^{(\beta)}_{+},\\
\D_t j^{(\beta)}_{-}+B_{0}\cdot\nabla j^{(\beta)}_{-}&=\rho^{(\beta)}_{-},
\end{cases}\end{equation}
where source terms $\rho_\pm^{(\beta)}$ are given by
\begin{equation}\label{definition of rhobeta}
\rho_\pm^{(\beta)}=-\D^\beta(\nabla z_\mp\wedge\nabla z_\pm)-[\D^\beta,z_\mp\cdot\nabla] j_\pm-z_\mp\cdot\nabla j_\pm^{(\beta)}.
\end{equation}
We apply \eqref{eq:LEE} to \eqref{eq:DcurlMHD-2} with the weight functions $\lambda_{\pm}=\langle u_{\mp}\rangle^{2\omega}$. This yields
\begin{equation*}
	\int_{\Sigma_{t}}\langle u_{\mp}\rangle^{2\omega}\big|j^{(\beta)}_{\pm}\big|^2  +\sup_{u_{\pm}\in \mathbb{R}}\int_{C_{u_{\pm}}^{\pm,t}}\langle u_{\mp}\rangle^{2\omega}\big|j^{(\beta)}_{\pm}\big|^2d\sigma_{\pm} 
	\lesssim \int_{\Sigma_{0}}\langle u_{\mp}\rangle^{2\omega}\big|j^{(\beta)}_{\pm}\big|^2  +\underbrace{\Big|\int_{0}^{t}\int_{\Sigma_{\tau}}\langle u_{\mp}\rangle^{2\omega}j^{(\beta)}_{\pm}\cdot\rho^{(\beta)}_{\pm} \Big|}_{\mathbf{K}_{\bm{\pm}}} ,
\end{equation*}
i.e.,
\begin{equation}\label{eq:HIGH}
	E^{(\beta)}_{\pm}(t)+F^{(\beta)}_{\pm}(t)\lesssim E^{(\beta)}_{\pm}(0) +\mathbf{K}_{\bm{\pm}}.
\end{equation}
Based on the symmetry considerations, it suffices to control $\mathbf{K}_{\bm{+}}$. We note that the source term $\rho_+^{(\beta)}$ in $\eqref{eq:DcurlMHD-2}$ can be bounded by
\begin{align*}
\big|\rho_+^{(\beta)}\big|&\leqslant\sum_{\gamma\leqslant\beta} C_\beta^\gamma\big|\nabla z_-^{(\gamma)}\big|\big|\nabla z_+^{(\beta-\gamma)}\big|
+\sum_{0\neq\gamma\leqslant\beta} C_\beta^\gamma\big|z_-^{(\gamma)}\big|\big|\nabla j_+^{(\beta-\gamma)}\big| \ +\ \big|z_-\cdot\nabla j_+^{(\beta)}\big|\\
&\lesssim\sum_{k\leqslant|\beta|} \big|\nabla z_-^{(k)}\big|\big|\nabla z_+^{(|\beta|-k)}\big|\ +\ \big|z_-\cdot\nabla j_+^{(\beta)}\big|.
\stepcounter{equation}\tag{\theequation}\label{rho beta explicit}
\end{align*}
As a consequence, we can bound $\mathbf{K}_{\bm{+}}$ by
\begin{equation*}
	\mathbf{K}_{\bm{+}}\lesssim \underbrace{\sum_{k\leqslant |\beta|}\int_{0}^{t}\int_{\Sigma_{\tau}}\langle u_{-}\rangle^{2\omega}\big|j^{(\beta)}_{+}\big|\big|\nabla z^{(k)}_{-} \big|\big|\nabla z^{(|\beta|-k)}_{+}\big|}_{\mathbf{K_1}}
	+\underbrace{\Big|\int_0^t\int_{\Sigma_\tau}\langle u_{-}\rangle^{2\omega}j^{(\beta)}_{+}\cdot \big(z_{-}\cdot \nabla j_{+}^{(\beta)}\big)\Big|}_{\mathbf{K_2}}.
\end{equation*}

\medskip

We first bound $\mathbf{K_1}$. According to the number of derivatives, we distinguish two cases:

\bigskip

\Emph{Case 1:} $0\leqslant|\beta|\leqslant N_{*}-2$. We can use Sobolev inequality on $\nabla z_{-}^{(k)}$ because $k+2\leqslant N_{*}$. Hence,
\begin{align*}
	\mathbf{K_1} 
	&\lesssim\sum_{k\leqslant|\beta|}\int_{0}^{t}
	\bigg(\big\| \langle u_{+}\rangle^{\omega}\nabla z^{(k)}_{-} \big\|_{L^\infty(\Sigma_{\tau})}
	\int_{\Sigma_{\tau}}\frac{\langle u_{-}\rangle^{\omega}
	\big|j^{(\beta)}_{+}\big|\cdot	\langle u_{-}\rangle^{\omega}\big|\nabla z^{(|\beta|-k)}_{+}\big|}{\langle u_{+}\rangle^{\omega}} \bigg)d\tau\\
	&\stackrel{\eqref{eq:sobolev}}{\lesssim}C_1\varepsilon_-\sum_{k\leqslant|\beta|}\int_{0}^{t}\int_{\Sigma_{\tau}}
	\frac{\langle u_{-}\rangle^{\omega}\big|j^{(\beta)}_{+}\big|\cdot
	\langle u_{-}\rangle^{\omega}\big|\nabla z^{(|\beta|-k)}_{+}\big|}{\langle u_{+}\rangle^{\omega}}\\
   &\lesssim C_1 \varepsilon_-
	\sum_{k\leqslant|\beta|}\int_{0}^{t}\bigg(\Big\| \frac{\langle u_{-}\rangle^{\omega}}{\langle u_{+}\rangle^{\frac{\omega}{2}}}j^{(\beta)}_{+}\Big\|^2_{L^2(\Sigma_\tau)}
	+\Big\|\frac{\langle u_{-}\rangle^{\omega}}
	{\langle u_{+}\rangle^{\frac{\omega}{2}}}\nabla z^{(|\beta|-k)}_{+}\Big\|^2_{L^2(\Sigma_\tau)}\bigg)d\tau\\
	&\stackrel{\eqref{eq:flux}}{\lesssim}\big(C_1\big)^3\varepsilon_+^2\varepsilon_-.
\end{align*}

\medskip

\Emph{Case 2:} $|\beta|= N_{*}-1$ or $N_{*}$. We rewrite $\mathbf{K_1}$ as
\begin{equation*}
	\mathbf{K_1}\lesssim\bigg(\underbrace{\sum_{k\leqslant N_{*}-2}}_{\mathbf{K_{11}}}+\underbrace{\sum_{N_{*}-1\leqslant k\leqslant|\beta|}}_{\mathbf{K_{12}}}\bigg)\int_{0}^{t}\int_{\Sigma_{\tau}}\langle u_{-}\rangle^{2\omega}\big|j^{(\beta)}_{+}\big|\big|\nabla z^{(k)}_{-} \big|\big|\nabla z^{(|\beta|-k)}_{+} \big|.
\end{equation*}
We notice that $\mathbf{K_{11}}$ can be controlled in the same manner as in \Emph{Case 1} so that
\begin{equation*}
	\mathbf{K_{11}}\lesssim \big(C_1\big)^3\varepsilon_+^2\varepsilon_-.
\end{equation*}
For $\mathbf{K_{12}}$, the $L^\infty$ bounds on $\nabla z_-^{(k)}$ are not available since one can not afford more than $N_*$ derivatives (via Sobolev inequality). Instead, we will use $L^\infty$ bounds of $\nabla z_+^{(|\beta|-k)}$ in a different way:
\begin{align*}
	\mathbf{K_{12}}
	&=\sum_{N_{*}-1\leqslant k\leqslant|\beta|}\int_{0}^{t}\int_{\Sigma_{\tau}}	\langle u_{+}\rangle^{\omega}
	\big|\nabla z^{(k)}_{-}\big|\cdot\frac{\langle u_{-}\rangle^{\omega}}{\langle u_{+}\rangle^{\frac{\omega}{2}}}\big|j^{(\beta)}_{+}\big|\cdot
	\frac{\langle u_{-}\rangle^{\omega}}{\langle u_{+}\rangle^{\frac{\omega}{2}}}
	\big|\nabla z^{(|\beta|-k)}_{+} \big|\\
	&\!\!\!\!\!\stackrel{\text{H\"older}}{\lesssim}\sum_{N_{*}-1\leqslant k\leqslant|\beta|}{\Big\|\langle u_{+}\rangle^{\omega}\nabla z^{(k)}_{-}	\Big\|_{L^\infty_{\tau}L^2_x} }
	\Big\|\frac{\langle u_{-}\rangle^{\omega}}{\langle u_{+}\rangle^{\frac{\omega}{2}}}j^{(\beta)}_{+}\Big\|_{L^2_{\tau}L^2_x} \Big\|\frac{\langle u_{-}\rangle^{\omega}}{\langle u_{+}\rangle^{\frac{\omega}{2}}}
	\nabla z^{(|\beta|-k)}_{+} \Big\|_{L^2_{\tau}L^\infty_x}.
\end{align*}
By \eqref{z to j inequality} and \eqref{bootstrap assumption}, we have
\[ \Big\|\langle u_{+}\rangle^{\omega}\nabla z^{(k)}_{-}\Big\|_{L^\infty_{\tau}L^2_x} \lesssim C_1\varepsilon_-.\]
By  \eqref{eq:flux}, we have
\[ \Big\|\frac{\langle u_{-}\rangle^{\omega}}{\langle u_{+}\rangle^{\frac{\omega}{2}}}j^{(\beta)}_{+}\Big\|_{L^2_{\tau}L^2_x} \lesssim C_1\varepsilon_+.\]
Therefore, we obtain
\begin{align*}
	\mathbf{K_{12}}
	&\lesssim \big(C_1\big)^2\varepsilon_+\varepsilon_-
	\!\!\sum_{k=0}^{|\beta|-(N_{*}-1)}\!\Big\|\frac{\langle u_{-}\rangle^{\omega}}{\langle u_{+}\rangle^{\frac{\omega}{2}}}
	\nabla z^{(k)}_{+} \Big\|_{L^2_{\tau}L^\infty_x}.
\end{align*}
Similar to the proof of Lemma \ref{SI}, the standard Sobolev inequalities give rise to
\begin{align*}
   \Big\|\frac{\langle u_{-}\rangle^{\omega}}{\langle u_{+}\rangle^{\frac{\omega}{2}}}
	\nabla z^{(k)}_{+} \Big\|_{L^2_\tau L^\infty_x}
	& \lesssim
	\Big\|\frac{\langle u_{-}\rangle^{\omega}}{\langle u_{+}\rangle^{\frac{\omega}{2}}}
	\nabla z^{(k)}_{+} \Big\|_{L^2_\tau L^2_x}\!\!
	+\Big\|\nabla^2\Big(\frac{\langle u_{-}\rangle^{\omega}}{\langle u_{+}\rangle^{\frac{\omega}{2}}}\nabla z^{(k)}_{+} \Big)
	\Big\|_{L^2_\tau L^2_x}\\
	&\!\!\stackrel{\eqref{differentiate weights coro}}{\lesssim}
	\sum_{l=k}^{k+2}\Big\|\frac{\langle u_{-}\rangle^{\omega}}{\langle u_{+}\rangle^{\frac{\omega}{2}}}
	\nabla z^{(l)}_{+}\Big\|_{L^2_\tau L^2_x}.
\end{align*}
Therefore, according to \eqref{eq:flux}, we have
\begin{equation}\label{eq:instead sobolev}
\Big\|\frac{\langle u_{-}\rangle^{\omega}}{\langle u_{+}\rangle^{\frac{\omega}{2}}}
	\nabla z^{(k)}_{+} \Big\|_{L^2_\tau L^\infty_x}\lesssim C_1\varepsilon_+.
\end{equation}
It follows that
\begin{equation*}
\mathbf{K_{12}}\lesssim \big(C_1\big)^3\varepsilon_+^2\varepsilon_-.
\end{equation*}
Thus, we conclude that for $0\leqslant|\beta|\leqslant N_*$,
\begin{equation*}
	\mathbf{K_{1}}\lesssim  \big(C_1\big)^3\varepsilon_+^2\varepsilon_-.
\end{equation*}
\bigskip

We turn to bound $\mathbf{K_{2}}$ now.  According to the number of derivatives, we distinguish two cases:

\bigskip

\Emph{Case 1}: $0\leqslant|\beta|\leqslant N_*-1$. In this case, we can use  Lemma \ref{flux} because $|\beta|+1\leqslant N_{*}$. Hence,
\begin{align*}
	\mathbf{K_{2}} &
	\lesssim\int_{0}^{t}\big\| \langle u_{+}\rangle^{\omega}z_{-} \big\|_{L^\infty(\Sigma_{\tau})}
	\int_{\Sigma_{\tau}}\frac{\langle u_{-}\rangle^{\omega}
	\big|j^{(\beta)}_{+}\big|\cdot\langle u_{-}\rangle^{\omega}\big|\nabla j^{(\beta)}_{+}\big|}{\langle u_{+}\rangle^{\omega}} \\
	&\stackrel{\eqref{eq:sobolev}}{\lesssim}
	C_1\varepsilon_-\int_0^t\int_{\Sigma_{\tau}}\frac{\langle u_{-}\rangle^{\omega}\big|j^{(\beta)}_{+}\big|\cdot
	\langle u_{-}\rangle^{\omega}\big|\nabla j^{(\beta)}_{+}\big|}{\langle u_{+}\rangle^{\omega}} \\
	&\lesssim 
	C_1 \varepsilon_-\bigg(\int_{0}^{t}\Big\| \frac{\langle u_{-}\rangle^{\omega}}{\langle u_{+}\rangle^{\frac{\omega}{2}}}j^{(\beta)}_{+}\Big\|^2_{L^2(\Sigma_\tau)}
	+\int_{0}^{t}\Big\| \frac{\langle u_{-}\rangle^{\omega}}{\langle u_{+}\rangle^{\frac{\omega}{2}}}\nabla j^{(\beta)}_{+}\Big\|^2_{L^2(\Sigma_\tau)}\bigg)\\
	&\stackrel{\eqref{eq:flux}}{\lesssim}\big(C_1\big)^3\varepsilon_+^2\varepsilon_-.
\end{align*}

\Emph{Case 2}: $|\beta|=N_*$. This is the top order term contributed from the quasi-linear part of the equations.  We need to apply integration by parts to save one derivative. In view of the fact that $\operatorname{div}z_-=0$,  we have
\begin{align*}
	\int_0^t\int_{\Sigma_\tau}\langle u_{-}\rangle^{2\omega}j^{(\beta)}_{+}\cdot \big(z_{-}\cdot\nabla j_{+}^{(\beta)}\big)
	&=\frac{1}{2}\int_0^t\int_{\Sigma_\tau}\langle u_{-}\rangle^{2\omega}z_{-}\cdot\nabla\big(\big| j_{+}^{(\beta)}\big|^2\big)\\
	&=-\frac{1}{2}\int_0^t\int_{\Sigma_\tau}\nabla\langle u_{-}\rangle^{2\omega} z_{-}\cdot\big| j_{+}^{(\beta)}\big|^2.
\end{align*}
In view of \eqref{differentiate weights coro}, we have $|\nabla\langle u_{-}\rangle^{2\omega}|\lesssim |\langle u_{-}\rangle^{2\omega}|$. Hence,
\begin{align*}
	\mathbf{K_{2}} & \lesssim\int_0^t\int_{\Sigma_\tau}|z_{-}|\cdot\langle u_{-}\rangle^{2\omega}\big|j_{+}^{(\beta)}\big|^2\\
	&\lesssim\int_0^t\int_{\Sigma_\tau}\big\|\langle u_{+}\rangle^{\omega} z_{-}\big\|_{L^\infty(\Sigma_\tau)}\cdot
	\frac{\langle u_{-}\rangle^{2\omega}}{\langle u_{+}\rangle^{\omega}}\big|j_{+}^{(\beta)}\big|^2\\
	&\stackrel{\eqref{eq:sobolev}}{\lesssim}C_1\varepsilon_-\int_0^t\int_{\Sigma_{\tau}}\frac{\langle u_{-}\rangle^{\omega}}{\langle u_{+}\rangle^{\frac{\omega}{2}}}\big|j^{(\beta)}_{+}\big|^2.
\end{align*}
Therefore, in view of \eqref{eq:flux} and the \Emph{Case 1}, for $0\leqslant|\beta|\leqslant N_*$, we always have
\begin{equation*}
	\mathbf{K_{2}}\lesssim \big(C_1\big)^3\varepsilon_+^2\varepsilon_-.
\end{equation*}
Hence, for $0\leqslant|\beta|\leqslant N_*$, we have
\begin{equation*}
\mathbf{K}_{\bm{+}}\lesssim \big(C_1\big)^3\varepsilon_+^2\varepsilon_-.
\end{equation*}
Combined with \eqref{eq:HIGH}, we obtain that for all $t\in [0,t^*]$ and for all $\beta$ with $0\leqslant|\beta|\leqslant N_*$, 
\begin{equation}\label{eq:H}
E_{\pm}^{(\beta)}(t)+F^{(\beta)}_{\pm}(t)\leqslant C_0 E_{\pm}^{(\beta)}(0)+C_0\big(C_1\big)^3\varepsilon_\pm^2\varepsilon_\mp,
\end{equation}
where $C_0$ is a universal constant. 
Summing up \eqref{eq:H} for all $0\leqslant |\beta|\leqslant N_{*}$ and taking  supremum over all  $t\in [0,t^*]$, we obtain  that
\begin{equation}\label{eq:HIGHEE}
	\sum_{k=0}^{ N_{*}}E_{\pm}^k+\sum_{k=0}^{ N_{*}} F_{\mp}^k\leqslant C_0 \sum_{k=0}^{ N_{*}}E_{\pm}^k(0)+C_0\big(C_1\big)^3\varepsilon_{\pm}^2\varepsilon_{\mp}.
\end{equation}

\subsection{End of the bootstrap argument}
According to \eqref{bootstrap assumption}, \eqref{eq:LOWEE} and \eqref{eq:HIGHEE}, for a universal constant $C_0$, we have
\begin{align*}
	\sum_{+,-}\bigg(E_{\pm}+F_{\pm}+\sum_{k=0}^{N_{*}}E_{\pm}^k+\sum_{k=0}^{N_{*}}F_{\mp}^k\bigg)
	&\leqslant\sum_{+,-}\Big( C_0\varepsilon_\pm^2+C_0\big(C_1\big)^3\varepsilon_\pm^2\varepsilon_\mp\Big)\\
	&=C_0\varepsilon^2+C_0\big(C_1\big)^3\varepsilon_+^2\varepsilon_-+C_0\big(C_1\big)^3\varepsilon_-^2\varepsilon_+\\
	&\leqslant
	C_0\varepsilon^2+C_0\big(C_1\big)^3\varepsilon^3.
\end{align*}
We then take $C_1=(3 C_0)^{\frac{1}{2}}$, $\varepsilon_0=\dfrac{2}{(3C_0)^{\frac{3}{2}}}$. Thus, for all $\varepsilon< \varepsilon_0$, the above inequality implies
\begin{equation*}
	\sum_{+,-}\bigg(E_{\pm}+F_{\pm}+\sum_{k=0}^{N_{*}}E_{\pm}^k+\sum_{k=0}^{N_{*}}F_{\mp}^k\bigg)\leqslant \big(C_1\big)^2\varepsilon^2,
\end{equation*}
which is the improved estimate \eqref{bootstrap assumption improved}. The proof of the Main Energy Estimates is now complete.

\begin{Remark}
The Main Energy Estimates is also independent of the choice of the initial slice $\Sigma_0$, namely, if we pose initial data on $\Sigma_{t_0}$ for some $t_0$, all the constants $C_0$, $C_1$, $\varepsilon_0$ and the estimates obtained in this section still hold. This is due to the time translation invariance of the MHD equations. 
\end{Remark}

\begin{Remark}
We also remark that, since we have completed the bootstrap argument, the constant $C_1$ from now on can be treated as a universal constant.
\end{Remark}

\begin{Remark}
	In view of \eqref{eq:LOWEE} and \eqref{eq:HIGHEE}, we have already proved the Refined Energy Estimates.
\end{Remark}

\medskip

We have two corollaries of the Main Energy Estimates which will be useful in the study of scattering fields in the next section.

\begin{Corollary}\label{coro:bound on p}
	For the solution constructed in the Main Energy Estimates, for $l=1,2$, for all $(\tau,x)\in \mathbb{R}\times \mathbb{R}^3$, we have the following bounds on pressure:
	\begin{equation}\label{eq:nablap}
	|\nabla^l p(\tau,x)|\lesssim\frac{\varepsilon_+\varepsilon_-}{\big(1+|\tau+a|\big)^\omega}.
	\end{equation}
\end{Corollary}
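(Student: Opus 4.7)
The plan is to use the Newton-potential representation of $p$ coming from $-\Delta p=\partial_i z_-^j\partial_j z_+^i$ and combine it with the pointwise Separation Estimate~\eqref{eq:separation}. The starting point for both $l=1$ and $l=2$ is the already-derived decomposition~\eqref{decomposition of nabla p}, which expresses $|\nabla p(\tau,x)|$ as a sum of three integrals whose kernels are, respectively, $|y|^{-2}\text{\Large$\chi$}_{|y|\leqslant 2}$, $|y|^{-4}\text{\Large$\chi$}_{|y|\geqslant 1}$, and the characteristic function of $\{1\leqslant|y|\leqslant 2\}$, acting on $|\nabla z_-\cdot\nabla z_+|$ in the first case and on $|z_-\cdot z_+|$ in the other two.

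For $l=1$, the Separation Estimate applied with $|\alpha|=|\beta|=0$ or $|\alpha|=|\beta|=1$ yields the uniform pointwise control $|(\nabla z_-)(\nabla z_+)|(\tau,x')+|(z_-z_+)|(\tau,x')\lesssim \varepsilon_+\varepsilon_-/(1+|\tau+a|)^{\omega}$, which is independent of the integration variable $x'$. Since the three kernels are in $L^1(\mathbb{R}^3)$ with $a$-independent bounds, one pulls the pointwise bound out of each integral and obtains the claim immediately.

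For $l=2$ I would differentiate once more. The far-field pieces $\mathbf{A_2}$ and $\mathbf{A_3}$ pose no difficulty: one additional $x$-derivative turns the corresponding kernel into $|y|^{-5}\text{\Large$\chi$}_{|y|\geqslant 1}$ or a bounded function supported on $\{1\leqslant|y|\leqslant 2\}$, both of which are in $L^1$, and the Separation Estimate on $|z_-z_+|$ closes the bound. The delicate piece is the local integral $\mathbf{A_1}$, because naively differentiating its kernel produces a $|y|^{-3}$ singularity that is not locally integrable in $\mathbb{R}^3$. The trick is to exploit the translation-invariant structure of the Newton kernel: writing the local part of $\nabla p$ as $\int G(y)f(x-y)\,dy$ with $G(y)=\nabla(1/|y|)\,\theta(|y|)$ and $f=\partial_i z_-^j\partial_j z_+^i$, one has
\[
\nabla_x\!\int G(y)\,f(x-y)\,dy=\int G(y)\,\nabla f(x-y)\,dy,
\]
so that the kernel remains $|G(y)|\lesssim|y|^{-2}$ (integrable on $\{|y|\leqslant 2\}$) while the source becomes $\nabla f$, which is a sum of terms of the form $\nabla^{\beta_1}z_-\cdot\nabla^{\beta_2}z_+$ with $|\beta_1|+|\beta_2|=3$ and each $|\beta_i|\leqslant 2\leqslant N_*-1$. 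The Separation Estimate then gives the same $\varepsilon_+\varepsilon_-/(1+|\tau+a|)^\omega$ pointwise bound for $\nabla f$, and the claim follows after one more application of Young's inequality.

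The only real obstacle is the local Calderón--Zygmund singularity appearing in the $l=2$ case, and it is resolved by this translation-invariance trick, which trades an extra derivative on the kernel for an extra derivative on the source—affordable thanks to the higher-order regularity already secured by the Main Energy Estimates ($N_*=7$). All constants are explicitly independent of $a$ since the Separation Estimate, the $L^1$-norms of the kernels, and the translation-invariant rearrangement all are.
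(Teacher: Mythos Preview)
Your proof is correct and follows essentially the same route as the paper: the Newton-potential decomposition~\eqref{decomposition of nabla p} together with pointwise bounds on the bilinear products, and for $l=2$ the translation-invariance trick (equivalently, integration by parts) to shift the extra derivative from the singular local kernel onto the source. Your invocation of the Separation Estimate~\eqref{eq:separation} as a black box is a small shortcut over the paper, which re-traces its proof inline via $|\langle u_\mp\rangle^\omega z_\pm^{(\alpha)}|\lesssim\varepsilon_\pm$ together with $\langle u_+\rangle\langle u_-\rangle\gtrsim 1+|\tau+a|$, but the substance is the same.
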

\begin{proof}
For $l=1$, we can use \eqref{decomposition of nabla p} and we bound $\nabla p$ by
\begin{equation*}
	|\nabla p(\tau,x)|
	\lesssim \underbrace{\int_{|x-x'|\leqslant 2}\!\!\!\!\frac{|(\nabla z_{-}\cdot\nabla z_{+})(\tau,x')|}{|x-x'|^2}dx'}_{\mathbf{A_1}}+\underbrace{\int_{|x-x'|\geqslant 1}\!\!\!\!\frac{|(z_{-}\cdot z_{+})(\tau,x')|}{|x-x'|^4}dx'}_{\mathbf{A_2}} +\underbrace{\int_{1\leqslant |x-x'|\leqslant 2}\! |(z_{-}\cdot z_{+})(\tau,x')|dx'}_{\mathbf{A_3}}.
\end{equation*}
We now estimate these three terms one by one. For $\mathbf{A_1}$, we have
\begin{align*}
\langle u_-\rangle^\omega\langle u_+\rangle^{\omega}|\mathbf{A_1}|
&=\int_{|x-x'|\leqslant 2}\frac{\big(\langle u_-\rangle^\omega\langle u_+\rangle^{\omega}\big)(\tau,x)|(\nabla z_{-}\cdot\nabla z_{+})(\tau,x')|}{|x-x'|^2}dx'\\
&\stackrel{\eqref{eq:xleq2}}{\lesssim}
 \int_{|x-x'|\leqslant 2}\frac{\big(\langle u_-\rangle^\omega\langle u_+\rangle^{ \omega}\big)(\tau,x')|(\nabla z_{-}\cdot\nabla z_{+})(\tau,x')|}{|x-x'|^2}dx'.
\end{align*}
Since we have proved that $\big|\langle u_\mp\rangle^\omega\nabla z_\pm\big|\lesssim \varepsilon_\pm$ in \eqref{eq:sobolev}, we conclude that
\begin{align*}
\langle u_-\rangle^\omega\langle u_+\rangle^{\omega}|\mathbf{A_1}|
& \lesssim
 \int_{|x-x'|\leqslant 2}\frac{\varepsilon_+\varepsilon_-}{|x-x'|^2}dx'\\
 &\lesssim \varepsilon_+\varepsilon_-.
\end{align*}
We can bound $\mathbf{A_3}$ in the same manner as for $\mathbf{A_1}$ and we obtain
\begin{equation*}
\langle u_+\rangle^\omega\langle u_-\rangle^\omega|\mathbf{A_3}|
\lesssim\varepsilon_+\varepsilon_-.
\end{equation*}
For $\mathbf{A_2}$, since \eqref{u+u-}, we have 
\[1+|\tau+a| \lesssim \langle u_{+}\rangle\langle u_{-}\rangle,\] 
and then
\begin{align*}
	\big(1+|\tau+a|\big)^{{\omega}}|\mathbf{A_2}|&\lesssim\int_{|x-x'|\geqslant 1}\frac{\langle u_+\rangle^\omega(\tau,x')\langle u_-\rangle^\omega(\tau,x')| z_{-}(\tau,x')||z_{+}(\tau,x')|}{|x-x'|^4}dx'\\
	&\lesssim \int_{|x-x'|\geqslant 1}\frac{\varepsilon_+\varepsilon_-}{|x-x'|^4}dx'\\
	&\lesssim \varepsilon_+\varepsilon_-.
\end{align*}
Putting the estimates of all the $\mathbf{A_i}$'s together, we obtain
\begin{equation}\label{nablap}
	|\nabla p(\tau,x)|\lesssim\frac{\varepsilon_+\varepsilon_-}{\big(1+|\tau+a|\big)^\omega}.
\end{equation}

To bound $\nabla^2 p$, we apply $\nabla$ on \eqref{eq:nabla p}. Similar to the derivation of \eqref{decomposition of nabla p}, we obtain
\begin{align*}
   |\nabla^2 p(\tau,x)|
	&\lesssim \underbrace{\sum_{l_{1},l_{2}=1}^2 \int_{|x-x'|\leqslant 2}\frac{1}{|x-x'|^2}\big|(\nabla^{l_1} z_{-}\cdot\nabla^{l_2} z_{+})(\tau,x')\big|dx'}_{\mathbf{B_1}}\\
	&\ \ 
	+\underbrace{\int_{|x-x'|\geqslant 1}\frac{1}{|x-x'|^4}\big| (z_{-}\cdot\nabla z_{+})(\tau,x')\big|dx'}_{\mathbf{B_2}} +\underbrace{\int_{1\leqslant |x-x'|\leqslant 2}\frac{1}{|x-x'|^3}\big|( z_{-}\cdot\nabla z_{+})(\tau,x')\big|dx'}_{\mathbf{B_3}},
\end{align*}
where $(l_1,l_2)=(1,1)$, $(1,2)$ or $(2,1)$. We then repeat the procedure of the estimates on  $\mathbf{A_i}$'s. This gives the estimates on $\mathbf{B_i}$'s and thus on $\nabla^2 p$.
\end{proof}

\begin{Corollary} For the solution constructed in the Main Energy Estimates, we have the following space-time estimates:
\begin{equation}\label{eq:LOW2}
\int_{\mathbb{R}\times \mathbb{R}^3} \langle u_{\mp}\rangle^{2\omega}\langle u_{\pm}\rangle^\omega |\nabla p +z_{\mp}\cdot\nabla z_{\pm}|^2\lesssim\varepsilon_+^2\varepsilon_-^2,
\end{equation}
and for each multi-index $\beta$ with $0\leqslant|\beta|\leqslant N_*-1$,
\begin{equation}\label{eq:HIGH2}
  \int_{\mathbb{R}\times \mathbb{R}^3}\langle u_{\mp}\rangle^{2\omega}\langle u_{\pm}\rangle^\omega \big|\rho^{(\beta)}_{\pm} \big|^2 \lesssim \varepsilon_+^2\varepsilon_-^2,
\end{equation}
where we refer to \eqref{definition of rhobeta} for the definition of $\rho_\pm^{(\beta)}$.
\end{Corollary}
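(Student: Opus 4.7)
\smallskip

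The plan is that \eqref{eq:LOW2} follows by direct bookkeeping within the bootstrap argument that was just closed. Since $C_1$ is now a universal constant, the quantities $\mathbf{I'}$ and $\mathbf{J'}$ introduced in \eqref{the term I'} and \eqref{the term J'} coincide, after the cut-off $t^*$, with the left-hand side of \eqref{eq:LOW2} associated to $|\nabla p|^2$ and $|z_-\cdot\nabla z_+|^2$ respectively. Each was bounded by $\lesssim \varepsilon_+^2\varepsilon_-^2$ uniformly in $t^*$ in \eqref{final estimate on I'} and \eqref{final estimate on J'}. Sending $t^*\to+\infty$ and applying the elementary inequality $|a+b|^2\le 2|a|^2+2|b|^2$ gives the future-time part of \eqref{eq:LOW2} for the $+$ equation; the $-$ case is symmetric, and the negative-time half of the integral is handled identically thanks to the time-reversibility of the MHD system.

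For \eqref{eq:HIGH2}, by symmetry I focus on $\rho_+^{(\beta)}$. The pointwise decomposition \eqref{rho beta explicit} reduces the task to bounding the transport-type integral $\int\langle u_-\rangle^{2\omega}\langle u_+\rangle^\omega|z_-|^2|\nabla j_+^{(\beta)}|^2$ together with the bilinear integral $\int\langle u_-\rangle^{2\omega}\langle u_+\rangle^\omega|\nabla z_-^{(k)}|^2|\nabla z_+^{(|\beta|-k)}|^2$ for $0\le k\le|\beta|\le N_*-1$. The transport term is immediate: \eqref{eq:sobolev} gives $|z_-|\lesssim \varepsilon_-/\langle u_+\rangle^\omega$, which absorbs the $\langle u_+\rangle^\omega$ weight and reduces the integral to $\varepsilon_-^2\int\frac{\langle u_-\rangle^{2\omega}}{\langle u_+\rangle^\omega}|\nabla j_+^{(\beta)}|^2\lesssim\varepsilon_+^2\varepsilon_-^2$ by \eqref{eq:flux}, valid since $|\beta|+1\le N_*$. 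For the bilinear integral I split into Case A, $k\le N_*-2$, and Case B, $k=N_*-1$ (which forces $|\beta|=N_*-1$ and $|\beta|-k=0$). Case A is analogous to the transport term: \eqref{eq:sobolev} gives $|\nabla z_-^{(k)}|\lesssim\varepsilon_-/\langle u_+\rangle^\omega$, and what remains is bounded by $\varepsilon_-^2\int\frac{\langle u_-\rangle^{2\omega}}{\langle u_+\rangle^\omega}|\nabla z_+^{(|\beta|-k)}|^2\lesssim\varepsilon_+^2\varepsilon_-^2$ via \eqref{eq:flux}.

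The main obstacle is Case B: substituting a pointwise bound on $\nabla z_+$ would reduce the integral to $\varepsilon_+^2\int\langle u_+\rangle^\omega|\nabla z_-^{(N_*-1)}|^2$, which is \emph{not} of flux-norm form. Instead I write the integrand as $|a|^2|b|^2$ with $a=\langle u_+\rangle^\omega\nabla z_-^{(N_*-1)}$ and $b=\frac{\langle u_-\rangle^\omega}{\langle u_+\rangle^{\omega/2}}\nabla z_+$. The algebraic identity $\left(\langle u_+\rangle^\omega\right)^2\cdot\Big(\frac{\langle u_-\rangle^\omega}{\langle u_+\rangle^{\omega/2}}\Big)^2=\langle u_-\rangle^{2\omega}\langle u_+\rangle^\omega$ matches the prescribed weight exactly; this matching dictates the choice of $a$ and $b$. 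Then $\int|a|^2|b|^2\le\|a\|_{L^\infty_\tau L^2_x}^2\|b\|_{L^2_\tau L^\infty_x}^2\lesssim\varepsilon_-^2\varepsilon_+^2$, where the first factor comes from the div-curl estimate \eqref{z to j inequality} combined with the energy bound $E_-^{N_*-1}\lesssim\varepsilon_-^2$ from \eqref{bootstrap assumption improved}, and the second from \eqref{eq:instead sobolev} applied at order $0\le N_*-2$.
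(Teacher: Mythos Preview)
Your proposal is correct and follows essentially the same approach as the paper: both reduce \eqref{eq:LOW2} to the already-established bounds on $\mathbf{I'}$ and $\mathbf{J'}$, and both handle \eqref{eq:HIGH2} by the same case split on the bilinear term---using the pointwise Sobolev bound \eqref{eq:sobolev} on $\nabla z_-^{(k)}$ when $k\le N_*-2$, and the $L^\infty_\tau L^2_x\times L^2_\tau L^\infty_x$ factorization (with exactly your choice of $a,b$) when $k=N_*-1$. Your organization by $k$ rather than by $|\beta|$ is a cosmetic difference, and your explicit mention of time-reversibility to cover the negative-time half of $\mathbb{R}\times\mathbb{R}^3$ is a point the paper leaves implicit.
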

\begin{proof}
By the symmetry considerations, we only give details for the estimates on  $\nabla p +z_{-}\cdot\nabla z_{+}$ and $\rho_+^{(\beta)}$.

For the first estimate, we have
\begin{align*}
    \int_{\mathbb{R}\times \mathbb{R}^3}\langle u_{-}\rangle^{2\omega}\langle u_{+}\rangle^\omega |\nabla p +z_{-}\cdot\nabla z_{+}|^2\leqslant\int_{\mathbb{R}\times \mathbb{R}^3}\langle u_{-}\rangle^{2\omega}\langle u_+\rangle^\omega|\nabla p|^2 +\int_{\mathbb{R}\times \mathbb{R}^3}\langle
	u_{-}\rangle^{2\omega}\langle u_+\rangle^\omega|z_-|^2|\nabla z_{+}|^2.
\end{align*}
The first term on the right hand side can be bounded exactly as the term $\mathbf{I}'$ in \eqref{the term I'}. Therefore, we can use the estimate from \eqref{final estimate on I'} and bound it by $\varepsilon_+^2\varepsilon_-^2$ (up to a universal constant). 
The second term on the right hand side can be bounded exactly as the term $\mathbf{J}'$ in \eqref{the term J'} and hence can be bounded by $\varepsilon_+^2\varepsilon_-^2$ (up to a universal constant) due to \eqref{final estimate on J'} . 
As a conclusion, we have
\[\int_{\mathbb{R}\times \mathbb{R}^3}\langle u_{-}\rangle^{2\omega}\langle u_{+}\rangle^\omega |\nabla p +z_{-}\cdot\nabla z_{+}|^2\lesssim\varepsilon_+^2\varepsilon_-^2.
\]

For the second estimate, for any multi-index $\beta$ with $0\leqslant|\beta|\leqslant N_*-1$, let 
\[\mathbf{II}=\int_{\mathbb{R}}\int_{\Sigma_{\tau}}\langle u_{-}\rangle^{2\omega}\langle u_{+}\rangle^\omega \big|\rho^{(\beta)}_{+} \big|^2.\]
In view of \eqref{rho beta explicit}, we have
\begin{equation}\label{term II1}
\mathbf{II}\lesssim\underbrace{\sum_{k\leqslant|\beta|}\int_{\mathbb{R}}\int_{\Sigma_{\tau}}\langle u_{-}\rangle^{2\omega}\langle u_{+}\rangle^\omega\big|\nabla z^{(k)}_{-} \big|^2\big|\nabla z^{(|\beta|-k)}_{+} \big|^2}_{\mathbf{II_1}}
+\underbrace{\int_{\mathbb{R}}\int_{\Sigma_\tau}\langle u_{-}\rangle^{2\omega}\langle u_{+}\rangle^\omega\big|z_{-}\big|^2\big|\nabla j_{+}^{(\beta)}\big|^2}_{\mathbf{II_2}}.
\end{equation}

\smallskip

To bound $\mathbf{II_1}$, we have two cases:

\smallskip

\Emph{Case 1:} $0\leqslant|\beta|\leqslant N_{*}-2$. We can bound $\nabla z_{-}^{(k)}$ in $L^\infty$ because $k+2\leqslant N_{*}$. Hence,
\begin{align*}
\mathbf{II_1} &\lesssim\sum_{k\leqslant|\beta|}   \int_{\mathbb{R}} \big\|\langle u_{+}\rangle^{\omega}\nabla z^{(k)}_{-}\big\|_{L^\infty(\Sigma_\tau)}^2\int_{\Sigma_{\tau}}\frac{\langle u_{-}\rangle^{2\omega}}{\langle u_{+}\rangle^{\omega}}\big|\nabla z^{(|\beta|-k)}_{+}\big|^2 \\
&\stackrel{\eqref{eq:sobolev}}{\lesssim}
\varepsilon_-^2\sum_{k\leqslant|\beta|}\int_{\mathbb{R}}\int_{\Sigma_{\tau}}\frac{\langle u_{-}\rangle^{2\omega}}{\langle u_{+}\rangle^{\omega}}\big|\nabla z^{(|\beta|-k)}_{+}\big|^2\\
&\stackrel{\eqref{eq:flux}}{\lesssim}\varepsilon_+^2\varepsilon_-^2.
\end{align*}

\smallskip

\Emph{Case 2:} $|\beta|= N_{*}-1$. We rewrite $\mathbf{II_1}$ as
\begin{equation*}
\mathbf{II_1}\lesssim\underbrace{\sum_{k\leqslant N_{*}-2}\int_{\mathbb{R}}\int_{\Sigma_{\tau}}\langle u_{-}\rangle^{2\omega}\langle u_{+}\rangle^\omega\big|\nabla z^{(k)}_{-} \big|^2\big|\nabla z^{(|\beta|-k)}_{+} \big|^2}_{\mathbf{II_{11}}}
+\underbrace{\int_{\mathbb{R}}\int_{\Sigma_{\tau}}\langle u_{-}\rangle^{2\omega}\langle u_{+}\rangle^\omega\big|\nabla z^{(|\beta|)}_{-} \big|^2\big|\nabla z_{+} \big|^2}_{\mathbf{II_{12}}}.
\end{equation*}

The terms $\mathbf{II_{11}}$ can be controlled in the same manner as in \Emph{Case 1} so that
\begin{equation*}
\mathbf{II_{11}}\lesssim\varepsilon_+^2\varepsilon_-^2.
\end{equation*}

For $\mathbf{II_{12}}$, we use $L^\infty$ estimates on $\nabla z_+$ to derive
\begin{align*}
\mathbf{II_{12}}&
=\int_{\mathbb{R}}\int_{\Sigma_{\tau}}
\langle u_{+}\rangle^{2\omega}
\big|\nabla z^{(|\beta|)}_{-}\big|^2\cdot
\frac{\langle u_{-}\rangle^{2\omega}}{\langle u_{+}\rangle^{\omega}}
\big|\nabla z_{+} \big|^2\\
& \lesssim
\Big\|\langle u_{+}\rangle^{\omega}
\nabla z^{(|\beta|)}_{-}
\Big\|^2_{L^\infty_{\tau}L^2_x}
\Big\|\frac{\langle u_{-}\rangle^{\omega}}{\langle u_{+}\rangle^{\frac{\omega}{2}}}
\nabla z_{+} 
\Big\|^2_{L^2_{\tau}L^\infty_x}\\
&\stackrel{\eqref{z to j inequality}, \eqref{bootstrap assumption}}{\lesssim}\varepsilon_-^2\Big\|\frac{\langle u_{-}\rangle^{\omega}}{\langle u_{+}\rangle^{\frac{\omega}{2}}}
\nabla z_{+} \Big\|^2_{L^2_{\tau}L^\infty_x}\\
&\stackrel{\eqref{eq:instead sobolev}}{\lesssim}\varepsilon_+^2\varepsilon_-^2.
\end{align*}

Therefore,
\begin{equation*}
\mathbf{II_1}\lesssim  \varepsilon_+^2\varepsilon_-^2.
\end{equation*}

To bound $\mathbf{II_2}$, we proceed as follows:
\begin{align*}
\mathbf{II_2} &\lesssim\int_{\mathbb{R}} \Big(\big\|\langle u_{+}\rangle^{\omega} z_{-}
\big\|_{L^\infty(\Sigma_\tau)}^2\int_{\Sigma_{\tau}}\frac{\langle u_{-}\rangle^{2\omega}}{\langle u_{+}\rangle^{\omega}}\big|\nabla j^{(\beta)}_{+}\big|^2 \Big)d\tau\\
&\stackrel{\eqref{eq:sobolev}}{\lesssim}
\varepsilon_-^2
\int_{\mathbb{R}\times \mathbb{R}^3}\frac{\langle u_{-}\rangle^{2\omega}}{\langle u_{+}\rangle^{\omega}}\big|\nabla j^{(\beta)}_{+}\big|^2\\
&\stackrel{\eqref{eq:flux}}{\lesssim}\varepsilon_+^2\varepsilon_-^2.
\end{align*}
Therefore, the estimates of $\mathbf{II_1}$ and $\mathbf{II_2}$ give the desired bound for $\mathbf{II}$. This completes the proof of the corollary.
\end{proof}

\section{The proof of the rigidity theorems}\label{main proof}

\subsection{The construction of  scattering fields and weight energy spaces at infinities}\label{SF}
By the symmetry considerations, we only consider the future scattering fields.
Given a point $(x_1,x_2,u_-) \in \mathcal{F}_+$ and
a point $(x_1,x_2,u_+) \in \mathcal{F}_-$, for the solution $\big(z_+(t,x),z_-(t,x)\big)$ constructed in the previous section, we  show that 
\begin{equation}\label{eq:def scattering}
\begin{cases}
	&\displaystyle z_+(+\infty;x_1,x_2,u_-):=z_+(0,x_1,x_2,u_-)-\int_0^{+\infty} \big(\nabla p+z_{-}\cdot\nabla z_{+}\big)(\tau,x_1,x_2,u_--\tau)d\tau,\\
	&\displaystyle z_-(+\infty;x_1,x_2,u_+):=z_-(0,x_1,x_2,u_+)-\int_0^{+\infty} \big(\nabla p+z_{+}\cdot\nabla z_{-}\big)(\tau,x_1,x_2,u_++\tau)d\tau
	\end{cases}
\end{equation}
are well-defined. Hence,  they define the scattering fields $z_+(+\infty;x_1,x_2,u_-)$ on $\mathcal{F}_+$ and $z_-(+\infty;x_1,x_2,u_+)$ on $\mathcal{F}_-$. In what follows, based on the symmetry considerations, we will only show the convergence for the integral in the definition of $z_+(+\infty;x_1,x_2,u_-)$.

In view of Lemma \ref{SE}, we have
\begin{equation*}
|(z_-\cdot\nabla z_+)(\tau,x_1,x_2,u_--\tau)|\lesssim \frac{\varepsilon_+\varepsilon_-}{\big(1+|\tau+a|\big)^\omega}.
\end{equation*}
In view of Corollary \ref{coro:bound on p}, we have 
\begin{equation*}
|\nabla p(\tau,x_1,x_2,u_--\tau)|\lesssim \frac{\varepsilon_+\varepsilon_-}{\big(1+|\tau+a|\big)^\omega}.
\end{equation*}
Thus, 
\begin{equation}
\label{eq:nablapz-nablaz+}
|(\nabla p+z_-\cdot\nabla z_+)(\tau,x_1,x_2,u_--\tau)|\lesssim\frac{\varepsilon_+\varepsilon_-}{\big(1+|\tau+a|\big)^\omega}.
\end{equation}
We observe that as a function of $\tau$,
$\big(1+|\tau+a|\big)^{-\omega} \!\in L^1(\mathbb{R})$ 
and its $L^1$-norm is independent of $a$. Therefore, the integral in the definition of $z_+(+\infty;x_1,x_2,u_-)$ in \eqref{eq:def scattering} converges. This implies that the scattering field $z_+(\infty;x_1,x_2,u_-)$ is well-defined point-wisely on $\mathcal{F}_+$.

\bigskip

On $\mathcal{F}_+$, we define the weighted Sobolev norms/spaces as follows: for any vector field $f$ on $\mathcal{F}_+$, we use the weighted measure 
\[\langle u_- \rangle^{2\omega} d\mu_- = \langle u_- \rangle^{2\omega} dx_1 dx_2 d u_-\] 
on $\mathcal{F}_+$. This leads to the definition of $L^2$-typespace $L^2(\mathcal{F}_+,\langle u_- \rangle^\omega d\mu_-)$. Similarly, by setting
\[\langle u_+ \rangle^{2\omega} d\mu_+ = \langle u_+ \rangle^{2\omega} dx_1 dx_2 d u_+,\]
we obtain the $L^2$-typespace $L^2(\mathcal{F}_-,\langle u_+ \rangle^{2\omega} d\mu_+)$ on $\mathcal{F}_-$. The key property of the scattering fields, which are point-wisely defined, is that they live in the Sobolev spaces based on the above measures:

\begin{Proposition}\label{prop:9}For all multi-indices $\beta$ with $0\leqslant\left|\beta\right|\leqslant N_*$, we have
\[\nabla^\beta z_{\pm}(+\infty;x_1,x_2,u_\mp)\in L^{2}(\mathcal{F}_\pm,\langle u_\mp \rangle^{2\omega} d\mu_{\mp}).\]
\end{Proposition}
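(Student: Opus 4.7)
The plan is to exploit the uniform-in-$t$ energy bounds from the Main Energy Estimates by pulling the derivatives back along characteristics to $\mathcal{F}_+$, and then pass to the limit via weak lower semicontinuity. By symmetry, only $z_+(+\infty;\cdot)$ on $\mathcal{F}_+$ needs to be treated; the other three cases are analogous.

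For each $t\geqslant 0$ and multi-index $\beta$ with $|\beta|\leqslant N_*$, define the pullback
\[g_t^\beta(x_1,x_2,u_-):=\partial^\beta z_+(t,x_1,x_2,u_- - t),\]
where $\partial^\beta$ denotes the spatial derivative on $\mathbb{R}^3$. Along the characteristic $\ell_-(x_1,x_2,u_-)$ the coordinate $u_-$ is conserved, so under the change of variable $u_-\mapsto x_3:=u_- - t$ (Jacobian $1$) the weight $\langle u_-\rangle$ on $\mathcal{F}_+$ agrees with $\langle u_-\rangle(t,x)$ on $\Sigma_t$, giving
\[\big\|g_t^\beta\big\|_{L^2(\mathcal{F}_+,\,\langle u_-\rangle^{2\omega}d\mu_-)}^2=\int_{\Sigma_t}\langle u_-\rangle^{2\omega}(t,x)\,\big|\partial^\beta z_+(t,x)\big|^2 dx.\]
For $|\beta|=0$ the right-hand side is exactly $E_+(t)$, and for $1\leqslant|\beta|\leqslant N_*$ the div-curl inequality \eqref{z to j inequality} with $\lambda=\langle u_-\rangle^{2\omega}$ (whose derivatives are controlled by \eqref{differentiate weights coro}) bounds it by $E_+(t)+\sum_{k=0}^{|\beta|-1}E_+^{(k)}(t)\lesssim\varepsilon^2$, uniformly in $t$, thanks to the Main Energy Estimates.

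It remains to identify $\lim_{t\to+\infty}g_t^\beta$ with $\partial^\beta z_+(+\infty;\cdot)$. The Existence of Future Scattering Fields theorem supplies the pointwise convergence $g_t^0\to z_+(+\infty;\cdot)$ on $\mathcal{F}_+$, which combined with the uniform weighted $L^2$-bound above and dominated convergence upgrades to $L^2_{\text{loc}}$-convergence. For any test function $\varphi\in C_c^\infty(\mathcal{F}_+)$ and any $\beta$, integration by parts on $\mathcal{F}_+$ yields
\[\int_{\mathcal{F}_+}g_t^\beta\cdot\varphi\,d\mu_-=(-1)^{|\beta|}\int_{\mathcal{F}_+}g_t^0\cdot\partial^\beta\varphi\,d\mu_-\xrightarrow[t\to+\infty]{}(-1)^{|\beta|}\int_{\mathcal{F}_+}z_+(+\infty)\cdot\partial^\beta\varphi\,d\mu_-,\]
so $g_t^\beta\to\partial^\beta z_+(+\infty;\cdot)$ in $\mathcal{D}'(\mathcal{F}_+)$. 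The uniform weighted $L^2$-bound lets Banach--Alaoglu extract a subsequence converging weakly in $L^2(\mathcal{F}_+,\langle u_-\rangle^{2\omega}d\mu_-)$, and the distributional identification forces its weak limit to coincide with $\partial^\beta z_+(+\infty;\cdot)$. Weak lower semicontinuity of the norm then delivers
\[\big\|\partial^\beta z_+(+\infty;\cdot)\big\|_{L^2(\mathcal{F}_+,\,\langle u_-\rangle^{2\omega}d\mu_-)}^2\leqslant\liminf_{t\to+\infty}\big\|g_t^\beta\big\|^2\lesssim\varepsilon^2.\]

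The main technical point is the identification of the limit for top-order $|\beta|=N_*-1,\,N_*$: at these orders no Sobolev embedding provides pointwise control of $g_t^\beta$, so a direct pointwise argument is unavailable. The distributional route circumvents this obstacle by relying only on the $\beta=0$ pointwise convergence already furnished by the Existence of Future Scattering Fields theorem, together with the uniform weighted higher-order $L^2$-bounds provided by the div-curl lemma and the Main Energy Estimates.
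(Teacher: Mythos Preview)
Your argument is correct and takes a genuinely different, more functional-analytic route than the paper's. The paper works directly from the integral representation $z_+(+\infty;\cdot)=z_+(0,\cdot)-\int_0^\infty(\nabla p+z_-\cdot\nabla z_+)\,d\tau$, splits into an initial-data piece and a nonlinear tail, and then devotes Lemma~\ref{Claim 1} through Corollary~\ref{Claim 2-0} to justifying the commutation of $\nabla^{\beta-1}\operatorname{curl}$ with the $\tau$-integral---first pointwise for one derivative via dominated convergence, then in the weighted $L^2$ sense for higher orders via Fatou, Newton--Leibniz, and Fubini. Your weak-compactness route bypasses all of this: only the pointwise convergence at order zero (already furnished by the Existence of Future Scattering Fields) and the uniform weighted energy bound on $\Sigma_t$ are needed, after which Banach--Alaoglu plus a distributional identification finish the job. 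The paper's computation does yield the refined bound $\mathbf{Q_2}\lesssim\varepsilon_+^2\varepsilon_-^2$ on the nonlinear piece, but that is not required for the proposition as stated, and the strong-convergence statements in Lemmas~\ref{lemma:15}--\ref{lemma:16} are proved independently downstream.

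One small imprecision worth flagging: ``uniform weighted $L^2$-bound \ldots{} and dominated convergence upgrades to $L^2_{\mathrm{loc}}$-convergence'' is not literally dominated convergence, since a uniform $L^2$ bound is not a pointwise majorant. What you actually have is either (i) the genuine pointwise domination $|g_t^0|\leqslant |z_+(0,\cdot)|+C\varepsilon^2$ coming from \eqref{eq:nablapz-nablaz+}, which is locally square-integrable and does license Lebesgue on compact sets; or (ii) uniform integrability on bounded sets from the uniform $L^2$ bound, after which Vitali's theorem gives $L^1_{\mathrm{loc}}$-convergence, already enough to pass to the limit against the compactly supported $\partial^\beta\varphi$. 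Either fix is routine and the rest of the argument is unaffected.
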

\begin{Remark}
There is also a counterpart for this statement at past infinities:
	\[\nabla^\beta z_{\pm}(-\infty;x_1,x_2,u_\mp)\in L^{2}(\mathcal{P}_\pm,\langle u_\mp \rangle^{2\omega} d\mu_{\mp}),\]
	where all the objects are constructed in the same manner.
\end{Remark}
\begin{proof}
The proof is divided into two steps. The first step deals with the case where $\beta=0$. The second step deals with the cases with $|\beta|\geqslant 1$. By the symmetry considerations, we only study $z_+$ and its derivatives on the future infinity $\mathcal{F}_+$.

\medskip
\noindent
\Emph{Step 1: } We show that $z_+(+\infty;x_1,x_2,u_-)\in L^2(\mathcal{F}_+,\langle u_- \rangle^{2\omega} d\mu_-)$.

\medskip
\noindent
By definition, we have\begin{align*}
	& \ \ \ \int_{\mathcal{F}_+}|z_{+}(\infty;x_1,x_2,u_-)|^2\langle u_{-}\rangle^{2\omega}d\mu_-\\
	&=\int_{\mathbb{R}^3}\Big|z_+(0,x_1,x_2,u_-)-\int_0^\infty(\nabla p+z_{-}\cdot\nabla z_{+})(\tau,x_1,x_2,u_--\tau)d\tau\Big|^2 \langle u_{-}\rangle^{2\omega}dx_1dx_2du_-\\
	&\lesssim\underbrace{\int_{\mathbb{R}^3}|z_{+}(0,x_1,x_2,u_-)|^2\langle u_{-}\rangle^{2\omega}dx_1dx_2du_-}_{\mathbf{P_1}}\\
	&\ \ 
	+\underbrace{\int_{\mathbb{R}^3} \Big|\int_{0}^{\infty}|(\nabla p+z_{-}\cdot\nabla z_{+})(\tau,x_1,x_2,u_--\tau)|d\tau\Big|^2\langle u_{-}\rangle^{2\omega}dx_1dx_2du_-}_{\mathbf{P_2}}.
\end{align*}
For $\mathbf{P_1}$, we have
\[\mathbf{P_1}= \int_{\mathbb{R}^{3}}\langle u_{-}\rangle^{2\omega}|z_{+}(0,x)|^2dx.
\]
This is $E_+(0)$. Thus, 
\[\mathbf{P_1}\lesssim \varepsilon_+^2.
\]
Before we treat  $\mathbf{P_2}$, we first recall that we can use the following four coordinate systems on $\mathbb{R}\times \mathbb{R}^3$: the Cartesian coordinates $(t,x_1,x_2,x_3)$, two characteristic coordinates $(t,x_1,x_2,u_-)$ and $(t,x_1,x_2,u_+)$ and the double characteristic coordinates $(x_1,x_2,u_-,u_+)$.

Therefore, in the double characteristic coordinates, we can write the nonlinear term in the definition of the scattering fields \eqref{eq:def scattering} as
\[\int_0^\infty \big(\nabla p+z_{-}\cdot\nabla z_{+}\big)(\tau,x_1,x_2,u_--\tau)d\tau=\int_{u_-}^{-\infty} \big(\nabla p+z_{-}\cdot\nabla z_{+}\big)(x_1,x_2,u_-,u_+)du_+.\]

We can then bound $\mathbf{P}_2$ as follows:
\begin{align*}
	\mathbf{P_2}
	&\leqslant\int_{\mathbb{R}^3}\Big|\int_{\mathbb{R}}|(\nabla p+z_{-}\cdot\nabla z_{+})(x_1,x_2,u_-,u_+)|du_+\Big|^2 \langle u_{-}\rangle^{2\omega}dx_1dx_2du_-\\
	&\lesssim\int_{\mathbb{R}^3} \Big(\int_{\mathbb{R}}\frac{1}{\langle u_{+}\rangle^{\omega}}du_+\Big)\Big(\int_{\mathbb{R}}\langle u_+\rangle^\omega|(\nabla p+z_{-}\cdot\nabla z_{+})(x_1,x_2,u_-,u_+)|^2du_+\Big)\langle u_{-}\rangle^{2\omega}dx_1dx_2du_-\\
    &\lesssim\int_{\mathbb{R}\times\mathbb{R}^3}\langle u_{-}\rangle^{2\omega}\langle u_+\rangle^\omega|(\nabla p+z_{-}\cdot\nabla z_{+})(x_1,x_2,u_-,u_+)|^2dx_1dx_2du_-du_+.
\end{align*}
We change the double characteristic coordinates $(x_1,x_2,u_-,u_+)$ to the Cartesian coordinates $(\tau,x_1,x_2,x_3)$. According to \eqref{eq:LOW2}, we obtain
\begin{equation}\label{eq:z+infty}
\begin{split}
	\mathbf{P_2}
	&\lesssim\int_{\mathbb{R}\times \mathbb{R}^3}\langle u_{-}\rangle^{2\omega}\langle u_+\rangle^\omega|\nabla p+z_{-}\cdot\nabla z_{+}|^2 dx_1dx_2dx_3d\tau\\
	&\lesssim\varepsilon_+^2\varepsilon_-^2.
	\end{split}
\end{equation}
Putting the estimates on $\mathbf{P_1}$ and $\mathbf{P_2}$ together, we have proved that $z_+(+\infty;x_1,x_2,u_-)\in L^2(\mathcal{F}_+,\langle u_- \rangle^\omega d\mu_-)$. 

\bigskip

\noindent
\Emph{Step 2: } We show that for all multi-indices $\beta$ with $1\leqslant|\beta|\leqslant N_*$, we have
\[(\nabla^{\beta}z_+)(+\infty;x_1,x_2,u_-)\in L^2(\mathcal{F}_+,\langle u_- \rangle^{2\omega} d\mu_-).\]
Before we proceed, it is important to clarify the meaning of differential operators defined on $\mathcal{F}_+$: all the differentiations are done with respect to the given coordinates $(x_1,x_2,u_-)$. For example, for a vector field $f=(f_1,f_2,f_3)=f_1\partial_{x_1}+f_2\partial_{x_2}+f_3\partial_{u_-}$ defined on $\mathcal{F}_+$, its divergence and curl are defined as
\begin{equation*}\begin{split}
\operatorname{div} f &=\partial_{x_1}f_1+\partial_{x_2}f_2+ \partial_{u_-}f_3,\\
\operatorname{curl} f &=\big(\partial_{x_2}f_3-\partial_{u_-}f_2, \partial_{u_-}f_1-\partial_{x_1}f_3,\partial_{x_1}f_2-\partial_{x_2}f_1\big).
\end{split}\end{equation*}
Moreover, we differentiate the integral in the definition of $z_+(\infty;x_1,x_2,u_-)$ as follows:
\begin{Lemma}\label{Claim 1} For any partial derivative $D\in \big\{\partial_{x_1},\partial_{x_2}, \partial_{u_-}\big\}$, we have 
	\begin{equation*}
	D\Big(\int_0^{+\infty}(\nabla p+z_{-}\cdot\nabla z_{+})(\tau,x_1,x_2,u_--\tau)d\tau\Big)=\int_0^{+\infty} D(\nabla p+z_{-}\cdot\nabla z_{+})(\tau,x_1,x_2,u_--\tau)d\tau.
	\end{equation*}
	We remark that on the left hand side of the equation, $D$ is defined with respect to the coordinate system $(x_1,x_2,u_-)$ on $\mathcal{F}_+$, while on the right hand side of the equation, $D$ is defined with respect to the coordinate system $(t, x_1,x_2,u_-)$ on $\mathbb{R}\times \mathbb{R}^3$. Furthermore, we have 
\begin{equation*}\begin{split}
	\operatorname{div}\Big(
	\int_0^{+\infty}(\nabla p+z_{-}\cdot\nabla z_{+})(\tau,x_1,x_2,u_--\tau)d\tau\Big)&=0,\\
	\operatorname{curl}\Big(
	\int_0^{+\infty}(\nabla p+z_{-}\cdot\nabla z_{+})(\tau,x_1,x_2,u_--\tau)d\tau\Big)&=\int_0^{+\infty}\operatorname{curl } (z_{-}\cdot\nabla z_{+})(\tau,x_1,x_2,u_--\tau)d\tau.
	\end{split}\end{equation*}
\end{Lemma}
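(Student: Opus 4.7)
The plan is to verify the three statements by standard differentiation under the integral sign, for which the decay estimates established in Section~\ref{energy estimates} will provide the required domination. Throughout, observe that on $\mathcal{F}_+$, equipped with the coordinates $(x_1,x_2,u_-)$, the partial derivatives $\partial_{x_1}, \partial_{x_2}, \partial_{u_-}$ correspond (at fixed $t$) to the usual spatial partials $\partial_{x_1}, \partial_{x_2}, \partial_{x_3}$ on $\mathbb{R}^3$, since $u_- = x_3 + t$. This identification is what makes the ``$D$ on the left'' and ``$D$ on the right'' in the statement the same differential operator once we pull it under the integral.

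For the interchange of $D$ with the $\tau$-integral, I invoke the Lebesgue dominated convergence theorem. The key inputs are already at hand: from the pointwise Sobolev bound \eqref{eq:sobolev}, the separation estimate \eqref{eq:separation}, and the pressure bound \eqref{eq:nablap}, we have for $l=0,1$
\[
\bigl|\nabla^l(\nabla p + z_-\!\cdot\!\nabla z_+)(\tau,x_1,x_2,u_--\tau)\bigr| \;\lesssim\; \frac{\varepsilon_+\varepsilon_-}{(1+|\tau+a|)^\omega},
\]
uniformly in $(x_1,x_2,u_-)$, and this envelope lies in $L^1_\tau(\mathbb{R})$ because $\omega = 1+\delta > 1$. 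Dominated convergence then gives $D\int_0^{+\infty}(\cdot)d\tau = \int_0^{+\infty} D(\cdot)d\tau$, proving the first assertion.

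For the divergence statement, I compute the divergence of the integrand pointwise on $\mathbb{R}\times\mathbb{R}^3$. Since $\nabla p$ is a gradient, $\mathrm{div}(\nabla p)=\Delta p = -\partial_i z_-^j\,\partial_j z_+^i$ by the elliptic equation for $p$ derived in Section~\ref{energy estimates}. On the other hand, using $\mathrm{div}\,z_+=0$,
\[
\operatorname{div}(z_-\!\cdot\!\nabla z_+) \;=\; \partial_k(z_-^l\,\partial_l z_+^k) \;=\; \partial_k z_-^l\,\partial_l z_+^k + z_-^l\,\partial_l(\mathrm{div}\,z_+) \;=\; \partial_i z_-^j\,\partial_j z_+^i.
\]
These two contributions cancel exactly, so the integrand is pointwise divergence-free; having already justified the interchange, the divergence of the integral is zero.

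For the curl statement, the observation $\operatorname{curl}(\nabla p)\equiv 0$ reduces the integrand's curl to $\operatorname{curl}(z_-\!\cdot\!\nabla z_+)$, and the required interchange of $\operatorname{curl}$ with $\int_0^{+\infty}$ is again dominated convergence, this time needing one extra derivative. The envelope remains integrable: Corollary~\ref{coro:bound on p} gives $|\nabla^2 p|\lesssim \varepsilon_+\varepsilon_-/(1+|\tau+a|)^\omega$, and the separation estimate \eqref{eq:separation} applied to $\nabla z_-\otimes\nabla z_+$ and $z_-\otimes\nabla^2 z_+$ (both with total derivative order $\leqslant N_*-1$) provides the same bound for $\operatorname{curl}(z_-\!\cdot\!\nabla z_+)$. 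There is no serious obstacle in the argument — the structural cancellation for the divergence is a one-line index computation, and the analytic subtlety has all been absorbed into the pointwise estimates of the previous section; the only point requiring care is to be explicit that $\partial_{u_-}$ on $\mathcal{F}_+$ is the chain rule derivative hitting the $x_3$-slot of $(\tau,x_1,x_2,u_--\tau)$, so that the identity in the lemma is a literal equality rather than something up to a change-of-coordinates Jacobian.
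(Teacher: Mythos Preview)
Your proof is correct and follows essentially the same approach as the paper: dominated convergence with the integrable envelope $\varepsilon_+\varepsilon_-/(1+|\tau+a|)^\omega$ supplied by Corollary~\ref{coro:bound on p} and Lemma~\ref{SE}, followed by the pointwise identities $\operatorname{div}(\nabla p + z_-\cdot\nabla z_+)=0$ and $\operatorname{curl}\nabla p=0$. The paper spells out the difference-quotient-plus-mean-value-theorem step for DCT and obtains the divergence identity by differentiating the first equation of \eqref{eq:MHD} rather than by your explicit index cancellation, but these are cosmetic differences.
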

\begin{proof}
	We take $D=\partial_{x_1}$. For a fixed $\tau$, to prove the identity, we use the definition of derivatives: 
	\begin{equation*}
	\partial_{x_1}\!(\nabla p+z_{-}\cdot\nabla z_{+})(\tau,x_1,x_2,u_-\!-\tau\!) \!=\!\lim_{h\to 0}\!\frac{(\nabla p+\!z_{-}\!\cdot\!\nabla z_{+})
		(\tau,x_1\!+\!h,x_2,u_-\!\!-\!\tau\!)\!-\!\!(\nabla p+\!z_{-}\!\cdot\!\nabla z_{+})
		(\tau,x_1,x_2,u_-\!\!-\!\tau\!)}{h}.
	\end{equation*}
	By virtue of the mean value theorem, there exists $\theta\in(0,1)$ so that
	\begin{align*}
	&\ \ \ \  \Big|\frac{(\nabla p+z_{-}\cdot\nabla z_{+})
		(\tau,x_1+h,x_2,u_--\tau)-(\nabla p+z_{-}\cdot\nabla z_{+})
		(\tau,x_1,x_2,u_--\tau)}{h}\Big|\\
	&=\big|\partial_{x_1}(\nabla p+z_{-}\cdot\nabla z_{+})
	(\tau,x_1+\theta h,x_2,u_--\tau)\big|\\
	&\leqslant 
	\Big|\big(|\nabla^2 p|+|\nabla z_{-}\cdot\nabla z_{+}| +|z_{-}\cdot\nabla^2 z_{+}|\big)
	(\tau,x_1+\theta h,x_2,u_--\tau)\Big|.
	\end{align*}
	According to \eqref{eq:nablap} and \eqref{eq:separation}, we have
	\begin{equation*}
	 \Big|\frac{(\nabla p+z_{-}\cdot\nabla z_{+})
		(\tau,x_1+h,x_2,u_--\tau)-(\nabla p+z_{-}\cdot\nabla z_{+})
		(\tau,x_1,x_2,u_--\tau)}{h}\Big|\lesssim \frac{\varepsilon_+\varepsilon_-}{\big(1+|\tau+a|\big)^\omega}.
	\end{equation*}
	The dominant function $\dfrac{\varepsilon_+\varepsilon_-}{\big(1+|\tau+a|\big)^\omega}$ is integrable in $\tau$. Thus, we can apply the Lebesgue's dominated convergence theorem to commute the limit and the integral:
	\begin{align*}
	&\ \ \ \partial_{x_1}\Big(\int_0^{+\infty}(\nabla p+z_{-}\cdot\nabla z_{+})(\tau,x_1,x_2,u_--\tau)d\tau\Big)\\	
	&=\lim_{h\to 0}\int_0^{+\infty}\frac{(\nabla p+z_{-}\cdot\nabla z_{+})(\tau,x_1+h,x_2,u_--\tau)-(\nabla p+z_{-}\cdot\nabla z_{+})(\tau,x_1,x_2,u_--\tau)}{h}
	d\tau\\
	&=\int_0^{+\infty}\lim_{h\to 0}\frac{(\nabla p+z_{-}\cdot\nabla z_{+})
		(\tau,x_1+h,x_2,u_--\tau)-(\nabla p+z_{-}\cdot\nabla z_{+})(\tau,x_1,x_2,u_--\tau)}{h}d\tau\\
	&=\int_0^{+\infty} \partial_{x_1}(\nabla p+z_{-}\cdot\nabla z_{+})(\tau,x_1,x_2,u_--\tau)d\tau.
	\end{align*}
	This proves the identity for $D=\partial_{x_1}$. For the other $D\in \big\{\partial_{x_1},\partial_{x_2}, \partial_{u_-}\big\}$, the proof is exactly the same.
	
	Finally, we remark that in the Cartesian coordinates, for a vector field $f$, we have $\operatorname{div}f=\D_i f^i $ and $\operatorname{curl}f=\varepsilon_{ijk}\D_i f^j \D_k$. By changing to the characteristic coordinates $(t,x_1,x_2,u_-)$, it is straightforward to check that
	\begin{equation*}\begin{split}
	\operatorname{div} f &=\partial_{x_1}f_1+\partial_{x_2}f_2+ \partial_{u_-}f_3,\\
	\operatorname{curl} f &=\big(\partial_{x_2}f_3-\partial_{u_-}f_2, \partial_{u_-}f_1-\partial_{x_1}f_3,\partial_{x_1}f_2-\partial_{x_2}f_1\big).
	\end{split}\end{equation*}
	Therefore, we have
	\begin{equation*}\begin{split}
	\operatorname{div}\Big(
	\int_0^{+\infty}(\nabla p+z_{-}\cdot\nabla z_{+})(\tau,x_1,x_2,u_--\tau)d\tau\Big)&=\int_0^{+\infty}\operatorname{div } (\nabla p+z_{-}\cdot\nabla z_{+})(\tau,x_1,x_2,u_--\tau)d\tau,\\
	\operatorname{curl}\Big(
	\int_0^{+\infty}(\nabla p+z_{-}\cdot\nabla z_{+})(\tau,x_1,x_2,u_--\tau)d\tau\Big)&=\int_0^{+\infty}\operatorname{curl } (\nabla p+z_{-}\cdot\nabla z_{+})(\tau,x_1,x_2,u_--\tau)d\tau.
	\end{split}\end{equation*}
	To be more explicit, we compute the integrands on the right side of the equalities above. For the first one,
	by taking the divergence of the first equation in \eqref{eq:MHD}, we derive from $\operatorname{div}z_+=0$ that
	$$\operatorname{div}(\nabla p+z_-\cdot\nabla z_+ )=0$$
	and thus
	\begin{equation*}
	\operatorname{div}\Big(\int_0^{+\infty}(\nabla p+z_{-}\cdot\nabla z_{+})(\tau,x_1,x_2,u_--\tau)d\tau\Big)=0.
	\end{equation*}
	For the second one, we note that $\operatorname{curl}\nabla p=0$
	since $p$ is a scalar function, which yields
	\begin{equation*}
	\operatorname{curl}\Big(\int_0^{+\infty}(\nabla p+z_{-}\cdot\nabla z_{+})(\tau,x_1,x_2,u_--\tau)d\tau\Big)=\int_0^{+\infty}\operatorname{curl}(z_{-}\cdot\nabla z_{+})
	(\tau,x_1,x_2,u_--\tau)d\tau.
	\end{equation*}	
	This completes the proof of the lemma.
\end{proof}
\begin{Remark}
	According to the definition in \eqref{eq:def scattering}, we obtain
	\begin{equation*}
	\operatorname{div }\nabla^{\beta-1}\!z_{+}(+\infty;x_1,x_2,u_-)	=\operatorname{div }\nabla^{\beta-1}\!z_+(0,x_1,x_2,u_-)-\operatorname{div }\!\nabla^{\beta-1}\!\Big(\!\int_0^{+\infty}\!\!\!\!\!\!\!\underbrace{(\nabla p+z_{-}\cdot\nabla z_{+})(\tau,x_1,x_2,u_--\tau)}_{\text{use characteristic coordinates}}\!d\tau\!\Big).
	\end{equation*}
	Since $\operatorname{div}z_+=0$, the first term on the right side above becomes zero. By virtue of Lemma \ref{Claim 1}, the second term on the right side above also vanishes. 
	As a consequence, for all multi-indices $\beta$ with $1\leqslant|\beta|\leqslant N_*$, we have the divergence free property
	\[\operatorname{div} (\nabla^{\beta-1}z_+)(+\infty;x_1,x_2,u_-)=0.\]
\end{Remark}
Now we return to show that for all multi-indices $\beta$ with $1\leqslant|\beta|\leqslant N_*$, we have
\[(\nabla^{\beta}z_+)(+\infty;x_1,x_2,u_-)\in L^2(\mathcal{F}_+,\langle u_- \rangle^{2\omega} d\mu_-).\]
In view of Lemma \ref{d-c} and the divergence free property, it suffices to show that
\[\operatorname{curl} (\nabla^{\beta-1}z_+)(+\infty;x_1,x_2,u_-)\in L^2(\mathcal{F}_+,\langle u_- \rangle^{2\omega} d\mu_-).\]
In fact, we can bound the above quantity in $L^2$ as follows:
\begin{align*}
&\ \ \ \
\int_{\mathcal{F}_+} \big|\operatorname{curl }\nabla^{\beta-1}z_{+}(+\infty;x_1,x_2,u_-)\big|^2\langle u_{-}\rangle^{2\omega}d\mu_-\\
&=\int_{\mathcal{F}_+}\Big|\operatorname{curl }\nabla^{\beta-1}z_+(0,x_1,x_2,u_-)-\operatorname{curl }\nabla^{\beta-1}\Big(\int_0^{+\infty}\underbrace{(\nabla p+z_{-}\cdot\nabla z_{+})(\tau,x_1,x_2,u_--\tau)}_{\text{use characteristic coordinates}}d\tau\Big) \Big|^2\langle u_{-}\rangle^{2\omega}d\mu_-\\
&\lesssim \underbrace{\int_{\mathbb{R}^3} \big|\nabla^{\beta}z_+(0,x_1,x_2,u_-)\big|^2\langle u_{-}\rangle^{2\omega}dx_1dx_2du_-}_{\mathbf{Q_1}}\\
& \ \ \ +\underbrace{\int_{\mathbb{R}^3} \Big|\operatorname{curl }\nabla^{\beta-1}\Big(\int_0^{+\infty}(\nabla p+z_{-}\cdot\nabla z_{+})(\tau,x_1,x_2,u_--\tau)d\tau\Big)\Big|^2\langle u_{-}\rangle^{2\omega}dx_1dx_2du_-}_{\mathbf{Q_2}}.
\end{align*}
We can bound $\mathbf{Q_1}$ by the initial energy norms:
\begin{align*}
\mathbf{Q_1}&\lesssim \int_{\mathbb{R}^{3}}\langle u_{-}\rangle^{2\omega}\big|\nabla^{\beta} z_{+}(0,x_1,x_2,x_3)\big|^2dx_1 dx_2 dx_3\\
&\lesssim \int_{\mathbb{R}^{3}}\langle u_{-}\rangle^{2\omega}\big| j^{(\beta-1)}_{+}(0,x_1,x_2,x_3)\big|^2dx_1 dx_2 dx_3,
\end{align*}
where we have used Lemma \ref{d-c}. Thus, we can use $E^{(\beta-1)}_+(0)$ to control the last integral. Finally, we have
\[\mathbf{Q_1}\lesssim\varepsilon_+^2.\]

\medskip

We now prepare a sequence of lemmas to  bound $\mathbf{Q_2}$.

\begin{Lemma}\label{inL2-0}
For all multi-indices $\beta$ with $1\leqslant\left|\beta\right|\leqslant N_*$,  we have 
\[\int_0^{+\infty} \underbrace{\nabla^{\beta-1}\operatorname{curl }\big( z_{-}\cdot\nabla z_{+}\big)(\tau,x_1,x_2,u_--\tau)}_{\text{use characteristic coordinates}} d\tau\in L^2\big(\mathcal{F}_+,\langle u_-\rangle^{2\omega}d\mu_-\big).\]
\end{Lemma}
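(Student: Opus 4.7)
The plan is to mimic the treatment of the term $\mathbf{P_2}$ from Step 1 of the preceding proof, after first reducing the integrand to the curl-equation source term $\rho_+^{(\beta-1)}$ from \eqref{definition of rhobeta}. The key algebraic identity I would establish is
\[
\nabla^{\beta-1}\operatorname{curl}(z_-\cdot\nabla z_+) = -\rho_+^{(\beta-1)},
\]
which follows from the direct computation $\operatorname{curl}(z_-\cdot\nabla z_+) = \nabla z_-\wedge\nabla z_+ + z_-\cdot\nabla j_+$ (using $\operatorname{div} z_- = 0$), followed by applying $\partial^{\beta-1}$ and comparing with the definition of $\rho_+^{(\beta-1)}$ (the commutator $[\partial^{\beta-1}, z_-\cdot\nabla]j_+$ and the term $z_-\cdot\nabla j_+^{(\beta-1)}$ combine into a single $\partial^{\beta-1}(z_-\cdot\nabla j_+)$). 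This algebraic reduction is the only non-routine step.

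Once this is in place, I would switch to double characteristic coordinates $(x_1,x_2,u_-,u_+)$: along the line $\tau \mapsto (\tau,x_1,x_2,u_- - \tau)$ one has $u_+ = u_- - 2\tau$, so the $\tau$-integral becomes an integral in $u_+$ over $(-\infty, u_-]$ with a harmless constant Jacobian. Cauchy--Schwarz with the weight split $1 = \langle u_+\rangle^{-\omega/2}\cdot\langle u_+\rangle^{\omega/2}$ then gives
\[
\Big|\int_{-\infty}^{u_-} \rho_+^{(\beta-1)}\,du_+\Big|^2 \lesssim \Big(\int_{\mathbb{R}}\frac{du_+}{\langle u_+\rangle^{\omega}}\Big)\int_{\mathbb{R}}\langle u_+\rangle^{\omega}\,\big|\rho_+^{(\beta-1)}\big|^2\,du_+,
\]
where the first factor on the right is a universal constant independent of $a$ because $\omega > 1$.

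The final step is to multiply by $\langle u_-\rangle^{2\omega}$, integrate over $\mathcal{F}_+$, and convert back to Cartesian coordinates $(\tau,x_1,x_2,x_3)$ in order to invoke the spacetime estimate \eqref{eq:HIGH2}. This yields
\[
\int_{\mathcal{F}_+}\Big|\int_0^{+\infty}\nabla^{\beta-1}\operatorname{curl}(z_-\cdot\nabla z_+)\,d\tau\Big|^2 \langle u_-\rangle^{2\omega}\,d\mu_- \lesssim \int_{\mathbb{R}\times\mathbb{R}^3}\langle u_-\rangle^{2\omega}\langle u_+\rangle^{\omega}\,\big|\rho_+^{(\beta-1)}\big|^2 \lesssim \varepsilon_+^2\varepsilon_-^2,
\]
which is finite precisely when $|\beta-1|\in[0,N_*-1]$, i.e., in the stated range $1\leqslant|\beta|\leqslant N_*$. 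There is no genuine obstacle beyond the algebraic identification with $\rho_+^{(\beta-1)}$; the remainder is a verbatim repetition of the $\mathbf{P_2}$-estimate from Step 1, with \eqref{eq:HIGH2} playing the role that \eqref{eq:LOW2} played there.
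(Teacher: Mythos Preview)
Your proposal is correct and follows essentially the same approach as the paper: both convert to double characteristic coordinates, apply Cauchy--Schwarz with the weight split $\langle u_+\rangle^{-\omega/2}\cdot\langle u_+\rangle^{\omega/2}$, and reduce to the spacetime estimate \eqref{eq:HIGH2} for $\rho_+^{(\beta-1)}$. The paper simply asserts the identification with $\rho_+^{(\beta-1)}$ without writing out the algebraic identity $\nabla^{\beta-1}\operatorname{curl}(z_-\cdot\nabla z_+) = -\rho_+^{(\beta-1)}$, whereas you make this step explicit; otherwise the arguments are verbatim the same.
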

\begin{proof}
The proof is similar to that of the bound of $\mathbf{P_2}$ in the previous step:
\begin{align*}
	&\ \ \ \ 
	\int_{\mathbb{R}^3}\Big|\int_0^{+\infty } \nabla^{\beta-1}\operatorname{curl }( z_{-}\cdot\nabla z_{+})(\tau,x_1,x_2,u_--\tau) d\tau\Big|^2\langle u_{-}\rangle^{2\omega}d\mu_-&\\
	&\lesssim \int_{\mathbb{R}^3} \left(\int_{\mathbb{R}}\frac{1}{\langle u_{+}\rangle^{\omega}}du_+\right)\left(\int_{\mathbb{R}}\langle u_+\rangle^\omega\big|\nabla^{\beta-1}\operatorname{curl }( z_{-}\cdot\nabla z_{+})(x_1,x_2,u_-,u_+)\big|^2du_+\right)\langle u_{-}\rangle^{2\omega}dx_1dx_2du_-\\
	&\lesssim \int_{\mathbb{R}\times \mathbb{R}^3}\langle u_{-}\rangle^{2\omega}\langle u_{+}\rangle^\omega\big|\nabla^{\beta-1}\operatorname{curl }( z_{-}\cdot\nabla z_{+})(x_1,x_2,u_-,u_+)\big|^2dx_1dx_2du_-du_+.
	\end{align*}
This is exactly the term $\displaystyle{\int_{\mathbb{R}\times \mathbb{R}^3}\langle u_{-}\rangle^{2\omega}\langle u_{+}\rangle^\omega\big|\rho_+^{(\beta-1)}\big|^2}$ in \eqref{eq:HIGH2}. Therefore, the above integral is bounded by $\varepsilon_+^2\varepsilon_-^2$ up to a universal constant. This completes the proof of the lemma.
\end{proof}

\begin{Lemma}\label{inL2''-0}
For all multi-indices $\beta$ with $1\leqslant\left|\beta\right|\leqslant N_*$,  for all partial derivatives $D\in \big\{\partial_{x_1},\partial_{x_2}, \partial_{u_-}\big\}$,we have 
\[D\Big(\int_0^{+\infty}\underbrace{\nabla^{\beta-2}\operatorname{curl }(z_{-}\cdot\nabla z_{+})(\tau,x_1,x_2,u_--\tau)}_{\text{use characteristic coordinates}}d\tau\Big)\in L^2\big(\mathcal{F}_+,\langle u_-\rangle^{2\omega}d\mu_-\big).\]
\end{Lemma}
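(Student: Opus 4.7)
The plan is to commute the outer partial $D$ past the $\tau$-integral and then invoke Lemma \ref{inL2-0}. Each $D\in\{\partial_{x_1},\partial_{x_2},\partial_{u_-}\}$ acts on the argument $(\tau,x_1,x_2,u_--\tau)$ as a pure spatial partial on the ambient Cartesian coordinates $(\tau,x_1,x_2,x_3)$ (with $\partial_{u_-}$ playing the role of $\partial_{x_3}$), so once the commutation is granted,
\[
D\!\int_0^{+\infty}\!\!\nabla^{\beta-2}\operatorname{curl}(z_-\!\cdot\!\nabla z_+)(\tau,x_1,x_2,u_--\tau)\,d\tau
=\int_0^{+\infty}\!\!\nabla^{\beta-1}\operatorname{curl}(z_-\!\cdot\!\nabla z_+)(\tau,x_1,x_2,u_--\tau)\,d\tau,
\]
and the right-hand side lies in $L^2(\mathcal{F}_+,\langle u_-\rangle^{2\omega}d\mu_-)$ with bound $\lesssim\varepsilon_+^2\varepsilon_-^2$ by Lemma \ref{inL2-0}. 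The entire task thus reduces to justifying this commutation identity.

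I would carry this out in the distributional sense, uniformly in $|\beta|$. Pair both sides against a test field $\phi\in C_c^\infty(\mathcal{F}_+;\mathbb{R}^3)$, write $\langle Df,\phi\rangle=-\langle f,D\phi\rangle$ where $f$ denotes the inner $\tau$-integral, and change to the double-characteristic coordinates $(x_1,x_2,u_-,u_+)$ on $\mathbb{R}\times\mathbb{R}^3$ so that $\int_0^{+\infty}d\tau$ becomes $\int_{-\infty}^{u_-}du_+$ (as already used in \textbf{Step 1}). Applying Fubini, integrating by parts in $D$ on the $(x_1,x_2,u_-)$-variables of the compactly supported $\phi$, and exchanging integrations back identifies the distributional derivative with $\int_0^{+\infty}\nabla^{\beta-1}\operatorname{curl}(z_-\!\cdot\!\nabla z_+)\,d\tau$, which is an $L^2$-function on $\mathcal{F}_+$ by Lemma \ref{inL2-0}. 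Absolute integrability for Fubini is supplied by Cauchy--Schwarz in $u_+$ against the weighted spacetime bound \eqref{eq:HIGH2}---applied to $\rho_+^{(\beta-2)}=-\nabla^{\beta-2}\operatorname{curl}(z_-\!\cdot\!\nabla z_+)$---combined with the compact support of $\phi$.

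The expected main obstacle is the top-order case $|\beta|=N_*$, where $\nabla^{N_*}z_\pm$ is only in $L^2$ on each time slice and a pointwise dominated-convergence commutation in the spirit of Lemma \ref{Claim 1} would fail: the Leibniz expansion of $\nabla^{\beta-1}\operatorname{curl}(z_-\!\cdot\!\nabla z_+)$ contains a factor to which neither the Sobolev pointwise bound of Lemma \ref{SI} nor the separation estimate \eqref{eq:separation} applies. The distributional formulation above bypasses this obstruction: it invokes only weighted $L^2$-control on spacetime, precisely as furnished by \eqref{eq:HIGH2} and already exploited in Lemma \ref{inL2-0}, and thus succeeds uniformly for all $1\leqslant|\beta|\leqslant N_*$.
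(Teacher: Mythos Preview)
Your approach is correct but differs from the paper's. The paper proves Lemma \ref{inL2''-0} by a direct real-variable argument: write the outer $D$ as a limit of difference quotients, pull the limit outside the weighted $L^2$ norm via Fatou's lemma, rewrite each difference quotient as $\int_0^1 \partial_{x_1}(\cdots)(x_1+\theta h,\ldots)\,d\theta$ via Newton--Leibniz, and then shift variables $x_1\mapsto X_1=x_1+\theta h$ to reduce to Lemma \ref{inL2-0} uniformly in $h,\theta$. No distributional pairing or Fubini enters at this stage.

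What you have done instead is essentially the paper's proof of the \emph{next} lemma, Lemma \ref{claim-0}: pair against a test field, pass $D$ onto $\phi$, apply Fubini (justified by \eqref{eq:HIGH2} exactly as the paper does for the quantities $\mathbf{V_1},\mathbf{V_2}$), integrate by parts, and Fubini back. This identifies the distributional derivative with $\int_0^{+\infty}\nabla^{\beta-1}\operatorname{curl}(z_-\cdot\nabla z_+)\,d\tau$, which lies in $L^2(\mathcal{F}_+,\langle u_-\rangle^{2\omega}d\mu_-)$ by Lemma \ref{inL2-0}; hence the distributional derivative is itself in that weighted $L^2$. Your route is thus more economical --- it proves Lemma \ref{inL2''-0} and Lemma \ref{claim-0} in a single stroke --- whereas the paper's Fatou argument establishes only the $L^2$ membership first and then separately proves the $L^2$-identity in Lemma \ref{claim-0} (citing both Lemma \ref{inL2-0} and Lemma \ref{inL2''-0} as inputs, though in fact only the former is needed once the distributional equality is shown). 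Your remark about the top-order obstacle to a dominated-convergence argument is apt, but note that the paper's Fatou route also avoids that obstacle without appeal to distributions.
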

\begin{proof}
We consider the case where $D=\partial_{x_1}$. The other cases can be treated in the same way. Let
\[\mathbf{U}= \int_{\mathcal{F}_+}\Big|\partial_{x_1} \Big(\int_0^{+\infty}\nabla^{\beta-2}\operatorname{curl }(z_{-}\cdot\nabla z_{+})(\tau,x_1,x_2,u_--\tau)d\tau\Big)\Big|^2\langle u_{-}\rangle^{2\omega}d\mu_-.\]
By definition, we have
\begin{align*}
	\mathbf{U}&\!=\!\int_{\mathbb{R}^3}\!\Big|\!\lim_{h\to 0}\int_0^{+\infty}\!\!\frac{\nabla^{\beta-2}\!\operatorname{curl }(z_{-}\!\cdot\!\nabla z_{+})(\tau,x_1\!+\!h,x_2,u_-\!-\!\tau\!)\!-\!\nabla^{\beta-2}\!\operatorname{curl }(z_{-}\!\cdot\!\nabla z_{+})(\tau,x_1,x_2,u_-\!-\!\tau\!)\!}{h}d\tau\Big|^2\!\!\!\langle u_{-}\rangle^{2\omega}\! d\mu_-\\
	&\!=\!\int_{\mathbb{R}^3}\!\lim_{h\to 0}\!\Big|\!\int_0^{+\infty}\!\!\frac{\nabla^{\beta-2}\!\operatorname{curl }(z_{-}\!\cdot\!\nabla z_{+})(\tau,x_1\!+\!h,x_2,u_-\!-\!\tau\!)\!-\!\nabla^{\beta-2}\!\operatorname{curl }(z_{-}\!\cdot\!\nabla z_{+})(\tau,x_1,x_2,u_-\!-\!\tau\!)\!}{h}d\tau\Big|^2\!\!\!\langle u_{-}\rangle^{2\omega}\! d\mu_-.
\end{align*}
Therefore, by Fatou's lemma, we obtain
\begin{equation*}
	\mathbf{U}\! \leqslant\!\liminf_{h\to 0}\!\!\int_{\mathbb{R}^3}\!\!\Big|\!\!\int_0^{+\infty}\!\!\frac{\nabla^{\beta-2}\!\operatorname{curl }(z_{-}\!\cdot\!\nabla z_{+})(\tau,x_1\!+\!h,x_2,u_-\!-\!\tau\!)\!-\!\!\nabla^{\beta-2}\!\operatorname{curl }(z_{-}\!\cdot\!\nabla z_{+})(\tau,x_1,x_2,u_-\!-\!\tau\!)\!\!}{h}d\tau\!\Big|^{2}\!\!\!\langle u_{-}\rangle^{2\omega}\!d\mu_-.
\end{equation*}
By virtue of Newton-Leibniz formula, we have
\begin{align*}
	\mathbf{U}&\leqslant\liminf_{h\to 0}\underbrace{\int_{\mathbb{R}^3}\Big|\int_0^{+\infty}\int_{0}^{1}\partial_{x_1}\nabla^{\beta-2}\operatorname{curl}(z_{-}\cdot\nabla z_{+})(\tau,x_1+\theta h,x_2,u_--\tau)d\theta d\tau\Big|^2 \langle u_{-}\rangle^{2\omega}d\mu_-}_{\mathbf{U_h}}.
\end{align*}
For all $h$, we have
\begin{align*}
	\mathbf{U}_h&\leqslant\int_{\mathbb{R}^3}\Big|\int_{0}^{1}\int_0^{+\infty}\big|\partial_{x_1}\nabla^{\beta-2}\operatorname{curl}(z_{-}\cdot\nabla z_{+})(\tau,x_1+\theta h,x_2,u_--\tau)\big|d\tau d\theta\Big|^2 \langle u_{-}\rangle^{2\omega}dx_1dx_2du_-\\
	&\leqslant\int_{0}^{1}\underbrace{\int_{\mathbb{R}^3}\Big|\int_0^{+\infty}\big|\partial_{x_1}\nabla^{\beta-2}\operatorname{curl}(z_{-}\cdot\nabla z_{+})(\tau,x_1+\theta h,x_2,u_--\tau)\big|d\tau\Big|^2 \langle u_{-}\rangle^{2\omega}dx_1dx_2du_-}_{\mathbf{U}_{h,\theta}}d\theta.
\end{align*}
In $\mathbf{U}_{h,\theta}$, we observe that $\theta$ and $h$ do not depend on $x_1$. By change of variable $x_1\rightarrow X_1=x_1+\theta h$, we have
\begin{align*}
	\mathbf{U}_{h,\theta}&\lesssim\int_{\mathbb{R}^3}\Big|\int_0^{+\infty} \big|\partial_{x_1}\nabla^{\beta-2}\operatorname{curl }(z_{-}\cdot\nabla z_{+})(\tau,X_1,x_2,u_--\tau)\big| d\tau\Big|^2 \langle u_{-}\rangle^{2\omega}dX_1dx_2du_-\\
	&\lesssim \varepsilon_+^2\varepsilon_-^2.
\end{align*}
For the last step, we have used the proof of Lemma \ref{inL2-0}. As an immediate consequence, $\mathbf{U}$ is finite. Hence, the proof of the lemma is complete.
\end{proof}

\begin{Lemma}\label{claim-0}
For all multi-indices $\beta$ with $2\leqslant\left|\beta\right|\leqslant N_*$, for all $(x_1,x_2,u_-)\in \mathcal{F}_+$, for all partial derivatives $D\in \big\{\partial_{x_1},\partial_{x_2}, \partial_{u_-}\big\}$, as vector fields in $L^2(\mathcal{F}_+,\langle u_{-}\rangle^{2\omega} d\mu_-)$, we have
\begin{equation*}
	D\Big(\!\int_0^{+\infty}\!\!\!\!\!\nabla^{\beta-2}\!\operatorname{curl }(z_{-}\cdot\nabla z_{+})(\tau,x_1,x_2,u_--\tau)d\tau\Big)  \stackrel{L^2(\mathcal{F}_+,\langle u_{-}\rangle^{2\omega} d\mu_-)}{=\joinrel=\joinrel=\joinrel=} \!\int_0^{+\infty}\!\!\!\!\! D\nabla^{\beta-2}\!\operatorname{curl }(z_{-}\cdot\nabla z_{+})(\tau,x_1,x_2,u_--\tau)d\tau,
\end{equation*}
and therefore 
\begin{equation*}
\nabla\Big(\!\int_0^{+\infty}\!\!\!\!\nabla^{\beta-2}\!\operatorname{curl }(z_{-}\cdot\nabla z_{+})(\tau,x_1,x_2,u_--\tau)d\tau\Big)  \stackrel{L^2(\mathcal{F}_+,\langle u_{-}\rangle^{2\omega} d\mu_-)}{=\joinrel=\joinrel=\joinrel=} \!\int_0^{+\infty}\! \nabla^{\beta-1}\!\operatorname{curl }(z_{-}\cdot\nabla z_{+})(\tau,x_1,x_2,u_--\tau)d\tau.
\end{equation*}
\end{Lemma}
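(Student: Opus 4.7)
The plan is to establish the identity by a truncation argument: cut the improper integral off at a finite parameter $T$, apply classical differentiation under the integral on $[0,T]$, and then pass to the limit $T\to+\infty$ in the weighted $L^2$ space. For each $T>0$, I would set
\begin{equation*}
F_T(x_1,x_2,u_-) := \int_0^T \nabla^{\beta-2}\operatorname{curl}(z_-\cdot\nabla z_+)(\tau,x_1,x_2,u_--\tau)\,d\tau,
\end{equation*}
and write $F_\infty$ for the corresponding integral over $[0,+\infty)$; likewise define $G_T$ and $G_\infty$ by inserting the extra derivative $D$ inside the integrand. On the finite interval $[0,T]$ the integrand is smooth in the parameters $(x_1,x_2,u_-)$, and by combining \eqref{eq:sobolev}, \eqref{eq:separation} and Corollary \ref{coro:bound on p} it is dominated by a $\tau$-integrable function. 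Hence the same dominated-convergence reasoning that produced Lemma \ref{Claim 1} yields the pointwise identity $D F_T = G_T$ on $\mathcal{F}_+$ for every finite $T$.

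Next I would establish the $L^2$ convergences $F_T\to F_\infty$ and $G_T\to G_\infty$ in $L^2(\mathcal{F}_+,\langle u_-\rangle^{2\omega}d\mu_-)$. For this I would repeat verbatim the computation in Lemma \ref{inL2-0}, but with the integration range $[0,+\infty)$ replaced by the tail $[T,+\infty)$. Passing to the double characteristic coordinates $(x_1,x_2,u_-,u_+)$ and applying Cauchy--Schwarz against $\langle u_+\rangle^{-\omega}\in L^1(du_+)$ gives
\begin{equation*}
\|F_\infty-F_T\|_{L^2(\mathcal{F}_+,\langle u_-\rangle^{2\omega}d\mu_-)}^2\lesssim \int_{\{\tau\geqslant T\}\times\mathbb{R}^3}\langle u_-\rangle^{2\omega}\langle u_+\rangle^\omega\big|\nabla^{\beta-2}\operatorname{curl}(z_-\cdot\nabla z_+)\big|^2,
\end{equation*}
together with the analogous tail bound for $G_\infty-G_T$ with $\beta-1$ in place of $\beta-2$. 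Both tails tend to $0$ as $T\to+\infty$ because the full integrals are controlled by \eqref{eq:HIGH2}; the hypothesis $|\beta|\leqslant N_*$ is precisely what guarantees that \eqref{eq:HIGH2} applies to the multi-index $\beta-1$ of order at most $N_*-1$.

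To conclude, I would invoke closedness of the distributional derivative on $L^2(\mathcal{F}_+,\langle u_-\rangle^{2\omega}d\mu_-)$. For any $\phi\in C_c^\infty(\mathcal{F}_+)$, the pointwise identity $DF_T=G_T$ combined with a compact-support integration by parts on $\mathcal{F}_+$ gives $\langle F_T,D\phi\rangle = -\langle G_T,\phi\rangle$; letting $T\to+\infty$ and using the two $L^2$ convergences just established yields $\langle F_\infty,D\phi\rangle=-\langle G_\infty,\phi\rangle$. Hence $D F_\infty = G_\infty$ as distributions, and since both sides already lie in $L^2(\mathcal{F}_+,\langle u_-\rangle^{2\omega}d\mu_-)$ by Lemmas \ref{inL2''-0} and \ref{inL2-0}, the identity holds in that space. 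Summing the three partial-derivative identities over $D\in\{\partial_{x_1},\partial_{x_2},\partial_{u_-}\}$ produces the displayed identity for $\nabla$.

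The step I expect to be most delicate is the coordinate bookkeeping: the $D$ on the outside of the integral acts in the coordinates $(x_1,x_2,u_-)$ on $\mathcal{F}_+$, whereas the $D$ on the right-hand side acts in the characteristic coordinates $(\tau,x_1,x_2,u_-)$ on $\mathbb{R}\times\mathbb{R}^3$ evaluated along the integral curve $x_3=u_--\tau$. Reexpressing everything in double characteristic coordinates $(x_1,x_2,u_-,u_+)$, as was already done in Lemmas \ref{inL2-0} and \ref{inL2''-0}, is the cleanest way to verify that the two $D$'s really coincide on each slice and to bring \eqref{eq:HIGH2} to bear on the tail estimates.
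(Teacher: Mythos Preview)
Your truncation-and-limit strategy is a reasonable alternative to the paper's direct Fubini argument, and steps 3 and 4 are fine. The gap is in step 2. You claim that the dominated-convergence reasoning of Lemma~\ref{Claim 1} gives the \emph{pointwise} identity $DF_T=G_T$, citing \eqref{eq:sobolev}, \eqref{eq:separation} and Corollary~\ref{coro:bound on p}. But the integrand of $G_T$ is $\nabla^{\beta-1}\operatorname{curl}(z_-\cdot\nabla z_+)$, whose top-order piece is $z_-\cdot\nabla j_+^{(\beta-1)}$, carrying $|\beta|+1$ derivatives on $z_+$. The pointwise Sobolev bound \eqref{eq:sobolev} only reaches $|\nabla z_\mp^{(\alpha)}|$ for $|\alpha|\leqslant N_*-2$, i.e.\ at most $N_*-1$ derivatives; likewise the separation estimate \eqref{eq:separation} stops at $N_*-1$. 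So for $|\beta|\in\{N_*-1,N_*\}$ you have no $L^\infty$ control on the differentiated integrand, and the Lebesgue differentiation-under-the-integral argument of Lemma~\ref{Claim 1} does not apply. In fact for $|\beta|=N_*$ even the undifferentiated integrand of $F_T$ need not be continuous in $(x_1,x_2,u_-)$, so the ``pointwise identity'' is not well posed.

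The paper avoids this entirely by never invoking a pointwise bound on the high-order term: it pairs $D\big(\int_0^\infty\cdots\big)$ against a test function $\varphi\in\mathcal D(\mathbb R^3)$, checks absolute integrability on $[0,\infty)\times\mathbb R^3$ via a weighted Cauchy--Schwarz bound (exactly the $\langle u_+\rangle^{\omega}$ trick you already use in step 3), applies Fubini to swap $d\tau$ and $d\mu_-$, and then integrates by parts at each fixed $\tau$ in the weak sense, which only needs the $L^2$ control of \eqref{eq:HIGH2}. Your scheme can be repaired the same way---replace the pointwise claim $DF_T=G_T$ by the distributional identity $\langle F_T,D\varphi\rangle=-\langle G_T,\varphi\rangle$ obtained via Fubini on $[0,T]\times\mathbb R^3$---but once you do that the truncation buys nothing, since the same Fubini argument works directly on $[0,\infty)$ and yields the conclusion without the detour through $T\to\infty$.
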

\begin{proof}
In view of Lemma \ref{inL2-0} and Lemma \ref{inL2''-0}, it suffices to show that
\[D\Big(\int_0^{+\infty}\nabla^{\beta-2}\operatorname{curl }(z_{-}\cdot\nabla z_{+})(\tau,x_1,x_2,u_--\tau)d\tau\Big)  \stackrel{\mathcal{D}'(\mathcal{F}_+)}{=\joinrel=\joinrel=} \int_0^{+\infty}D\nabla^{\beta-2}\operatorname{curl }(z_{-}\cdot\nabla z_{+})(\tau,x_1,x_2,u_--\tau)d\tau\]
in the sense of distributions. Similar as before, we may assume $D=\partial_{x_1}$.

We take a vector field $\varphi\in \mathcal{D}(\mathbb{R}^3)$ and we define the pairing $\mathbf{V}$ in the sense of distributions:
\begin{align*}
	\mathbf{V}&= \bigg\langle D\Big(\int_0^{+\infty}\nabla^{\beta-2}\operatorname{curl }(z_{-}\cdot\nabla z_{+})(\tau,x_1,x_2,u_--\tau)d\tau\Big),\varphi(x_1,x_2,u_-)\bigg\rangle\\
	&=-\bigg\langle\int_0^{+\infty}\nabla^{\beta-2}\operatorname{curl }(z_{-}\cdot\nabla z_{+})(\tau,x_1,x_2,u_--\tau)d\tau, D \varphi(x_1,x_2,u_-)\bigg\rangle\\
	&=-\int_{\mathbb{R}^3}\Big(\int_0^{+\infty}\nabla^{\beta-2}\operatorname{curl }(z_{-}\cdot\nabla z_{+})(\tau,x_1,x_2,u_--\tau)d\tau\Big)\cdot D\varphi(x_1,x_2,u_-)d\mu_-.
\end{align*}
In the last step, we have used Lemma \ref{inL2-0} to conclude that $\displaystyle\int_0^{+\infty}\nabla^{\beta-2}\operatorname{curl }(z_{-}\cdot\nabla z_{+})(\tau,x_1,x_2,u_--\tau)d\tau$ is a locally integrable function. We will apply Fubini's theorem to $\mathbf{V}$ to commute the integrals. Therefore, it is natural to consider the following spacetime integral:
\begin{equation}\label{eq:a1}
\begin{split}
&\ \ \ \
\int_{[0,+\infty)\times \mathbb{R}^3}\big|\nabla^{\beta-2}\operatorname{curl }(z_{-}\cdot\nabla z_{+})(\tau,x_1,x_2,u_--\tau)\cdot D\varphi(x_1,x_2,u_-)\big|d\tau dx_1dx_2du_-\\
&\lesssim \int_{\mathbb{R}^3}\int_{\mathbb{R}}\big|\nabla^{\beta-2}\operatorname{curl }(z_{-}\cdot\nabla z_{+})(x_1,x_2,u_-,u_+)\big|\cdot|D\varphi(x_1,x_2,u_-)|dx_1dx_2du_-du_+\\
&\lesssim
\bigg(\underbrace{\int_{\mathbb{R}\times \mathbb{R}^3}\langle  u_+\rangle^{\omega}\big|\nabla^{\beta-2}\operatorname{curl }(z_{-}\cdot\nabla z_{+})(x_1,x_2,u_-,u_+)\big|^2 }_{\mathbf{V_1}}\bigg)^{\frac{1}{2}}
\Bigg(\underbrace{\int_{\mathbb{R}\times \mathbb{R}^3}\frac{|D\varphi(x_1,x_2,u_-)|^2}{\langle u_+\rangle^{\omega}}}_{\mathbf{V_2}}\Bigg)^{\frac{1}{2}}.
\end{split}
\end{equation}
In view of the fact that $\langle u_-\rangle^{2\omega}\geqslant 1$, we can bound $\mathbf{V_1}$ as follows:
\begin{equation*}
\mathbf{V_1}
\lesssim
\int_{\mathbb{R}\times \mathbb{R}^3}\langle u_-\rangle^{2\omega}\langle u_+\rangle^{\omega}\big|\nabla^{\beta-2}\operatorname{curl }(z_{-}\cdot\nabla z_{+})(x_1,x_2,u_-,u_+)\big|^2.
\end{equation*}
This is the term treated in Lemma \ref{inL2-0}. Therefore,
\begin{align*}
\mathbf{V_1}&\lesssim \varepsilon_+^2\varepsilon_-^2.
\end{align*}
We note that $\varphi\in \mathcal{D}(\mathbb{R}^3)$, then $D\varphi\in \mathcal{D}(\mathbb{R}^3)\subset L^2(\mathbb{R}^3)$ and we can use the flux to bound
\begin{equation*}
\mathbf{V_2}= \int_{\mathbb{R}}\frac{1}{\langle u_+\rangle^{\omega}}\Big(\int_{C^{+,t}_{u_+}} |D \varphi(x_1,x_2,u_-)|^2  d\sigma_+ \Big)du_+.
\end{equation*}
Since $\varphi$ is a smooth function with compact support, we obtain
\begin{equation*}
\mathbf{V_2}\lesssim \|\nabla \varphi\|_{L^1}.
\end{equation*}
Thus, we conclude that
\begin{equation*}
\int_{[0,+\infty)\times \mathbb{R}^3}\big|\nabla^{\beta-2}\operatorname{curl }(z_{-}\cdot\nabla z_{+})(\tau,x_1,x_2,u_--\tau)\cdot D\varphi(x_1,x_2,u_-)\big|d\tau dx_1dx_2du_-<\infty.
\end{equation*}
Therefore, we can apply Fubini's theorem to $\mathbf{V}$ to derive
\begin{align*}
	\mathbf{V}&=-\int_{[0,+\infty) \times \mathbb{R}^3}\nabla^{\beta-2}\operatorname{curl }(z_{-}\cdot\nabla z_{+})(\tau,x_1,x_2,u_--\tau)\cdot D\varphi(x_1,x_2,u_-)d\tau dx_1dx_2du_-\\
	&=-\int_{[0,+\infty)} \bigg(\int_{\mathbb{R}^3}\nabla^{\beta-2}\operatorname{curl }(z_{-}\cdot\nabla z_{+})(\tau,x_1,x_2,u_--\tau)\cdot D\varphi(x_1,x_2,u_-)dx_1dx_2du_-\bigg)d\tau \\
	&=\int_{[0,+\infty)} \bigg(\int_{\mathbb{R}^3}D\nabla^{\beta-2}\operatorname{curl }(z_{-}\cdot\nabla z_{+})(\tau,x_1,x_2,u_--\tau)\cdot \varphi(x_1,x_2,u_-)dx_1dx_2du_-\bigg)d\tau \\
	&=\int_{[0,+\infty)\times \mathbb{R}^3}D\nabla^{\beta-2}\operatorname{curl }(z_{-}\cdot\nabla z_{+})(\tau,x_1,x_2,u_--\tau)\cdot \varphi(x_1,x_2,u_-)dx_1dx_2du_-d\tau.
\end{align*}
We can repeat the argument in \eqref{eq:a1} to show that 
\[\int_{[0,+\infty)\times \mathbb{R}^3}\big|D\nabla^{\beta-2}\operatorname{curl }(z_{-}\cdot\nabla z_{+})(\tau,x_1,x_2,u_--\tau)\cdot \varphi(x_1,x_2,u_-)\big|dx_1dx_2du_-d\tau<\infty.\]
Therefore, we can use Fubini's theorem again to derive
\begin{align*}
	\mathbf{V}&=\int_{\mathbb{R}^3}\Big(\int_{0}^{+\infty} D\nabla^{\beta-2}\operatorname{curl }(z_{-}\cdot\nabla z_{+})(\tau,x_1,x_2,u_--\tau)d\tau\Big)\cdot \varphi(x_1,x_2,u_-)dx_1dx_2du_-\\
	&=\bigg\langle\int_0^{+\infty} D\nabla^{\beta-2}\operatorname{curl }(z_{-}\cdot\nabla z_{+})(\tau,x_1,x_2,u_--\tau)d\tau,\varphi(x_1,x_2,u_-)\bigg\rangle.
\end{align*}
This proves the lemma.
\end{proof}
By induction on $\beta$, the above lemma has the following immediate consequence:
\begin{Corollary}\label{Claim 2-0} For all multi-indices $\beta$ with $2\leqslant\left|\beta\right|\leqslant N_*$,  as vector fields in $L^2(\mathcal{F}_+,\langle u_{-}\rangle^{2\omega} d\mu_-)$, we have
\begin{equation}\label{L2beta-0}
\nabla^{\beta-1}\!\Big(\!\!\int_0^{+\infty}\!\!\!\!\!\!\operatorname{curl}(z_{-}\cdot\nabla z_{+})(\tau,x_1,x_2,u_-\!-\tau)d\tau\!\Big)
\!\stackrel{L^2(\mathcal{F}_+,\langle u_{-}\rangle^{2\omega} d\mu_-)}{=\joinrel=\joinrel=\joinrel=}\!\!\!\! \int_0^{+\infty}\!\!\!\!\!\nabla^{\beta-1}\!\operatorname{curl}(z_{-}\cdot\nabla z_{+})(\tau,x_1,x_2,u_-\!-\tau)d\tau.
\end{equation}
\end{Corollary}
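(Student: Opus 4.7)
The plan is to induct on $|\beta|$, using Lemma \ref{claim-0} to pull derivatives one at a time inside the time-integral. For the base case $|\beta|=2$, the identity \eqref{L2beta-0} is exactly the ``therefore'' clause of Lemma \ref{claim-0} specialized to $|\beta|=2$, where $\nabla^{\beta-2}$ reduces to the identity operator.

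For the inductive step, suppose \eqref{L2beta-0} holds for all multi-indices of length $k$ for some fixed $2\leq k\leq N_*-1$. Let $\beta$ be any multi-index with $|\beta|=k+1$ and factor $\nabla^{\beta-1}=D\circ \nabla^{\alpha}$, where $D\in\{\partial_{x_1},\partial_{x_2},\partial_{u_-}\}$ and $\alpha$ is a multi-index with $|\alpha|=k-1$. Applying the inductive hypothesis to a multi-index of length $k$ whose corresponding $(k-1)$-th order operator is $\nabla^{\alpha}$, I obtain, as elements of $L^2(\mathcal{F}_+,\langle u_-\rangle^{2\omega}d\mu_-)$,
\begin{equation*}
\nabla^{\alpha}\Big(\int_0^{+\infty}\operatorname{curl}(z_-\cdot\nabla z_+)(\tau,x_1,x_2,u_--\tau)\,d\tau\Big) = \int_0^{+\infty}\nabla^{\alpha}\operatorname{curl}(z_-\cdot\nabla z_+)(\tau,x_1,x_2,u_--\tau)\,d\tau.
\end{equation*}
Applying $D$ to both sides in the sense of distributions, the left-hand side becomes $\nabla^{\beta-1}$ of the original time integral. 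The right-hand side, thanks to Lemma \ref{claim-0} applied with the multi-index $\beta$ (legitimate since $2\leq|\beta|=k+1\leq N_*$), equals
\begin{equation*}
\int_0^{+\infty}D\nabla^{\alpha}\operatorname{curl}(z_-\cdot\nabla z_+)(\tau,x_1,x_2,u_--\tau)\,d\tau = \int_0^{+\infty}\nabla^{\beta-1}\operatorname{curl}(z_-\cdot\nabla z_+)(\tau,x_1,x_2,u_--\tau)\,d\tau,
\end{equation*}
which lies in $L^2(\mathcal{F}_+,\langle u_-\rangle^{2\omega}d\mu_-)$ by Lemma \ref{inL2-0}. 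Chaining these two identities yields \eqref{L2beta-0} for $|\beta|=k+1$.

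The only delicate point, and what I expect to be the main (essentially the only genuine) obstacle, is the promotion of a distributional derivative of an $L^2$ equality to an $L^2$ equality. This is resolved exactly as in the proof of Lemma \ref{claim-0}: knowing independently that both sides are $L^2$ functions (via Lemmas \ref{inL2-0} and \ref{claim-0}), the distributional identity automatically upgrades to an $L^2$ identity. No new analytic estimates are required beyond those already established; the corollary is an immediate consequence of iterating Lemma \ref{claim-0}.
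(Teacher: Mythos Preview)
Your proposal is correct and matches the paper's approach exactly: the paper simply states that the corollary follows ``by induction on $\beta$'' as an immediate consequence of Lemma~\ref{claim-0}, and you have spelled out precisely that induction. No additional ideas are needed.
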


\bigskip

We return to the proof of Proposition \ref{prop:9}. It remains to bound $\mathbf{Q_2}$:
\begin{align*}
\mathbf{Q_2}
&=\int_{\mathbb{R}^3}\left|\nabla^{\beta-1}\operatorname{curl}\Big(\int_0^{+\infty}(\nabla p+z_{-}\cdot\nabla z_{+})(\tau,x_1,x_2,u_--\tau)d\tau\Big)\right|^2\left<u_{-}\right>^{2\omega}dx_1dx_2du_-\\
&\stackrel{\text{Lemma \ref{Claim 1}}}{=}
\int_{\mathbb{R}^3}\left|\nabla^{\beta-1}\Big(\int_0^{+\infty}\operatorname{ curl }(z_{-}\cdot\nabla z_{+})(\tau,x_1,x_2,u_--\tau)d\tau\Big)\right|^2\left<u_{-}\right>^{2\omega}dx_1dx_2du_-\\
&\stackrel{\text{Corollary \ref{Claim 2-0}}}{=}\int_{\mathbb{R}^3}
\left|\int_0^{+\infty} \nabla^{\beta-1}\operatorname{curl}(z_{-}\cdot\nabla z_{+})(\tau,x_1,x_2,u_--\tau)d\tau\right|^2\left<u_{-}\right>^{2\omega}dx_1dx_2du_-.
\end{align*}		
Thus, we can apply the proof of Lemma \ref{inL2-0} to conclude that
\[\mathbf{Q_2}\lesssim \varepsilon_+^2\varepsilon_-^2.\]
Together with the estimate of $\mathbf{Q_1}$, we obtain
\[\operatorname{curl} (\nabla^{\beta-1}z_+)(\infty;x_1,x_2,u_-)\in L^2(\mathcal{C}_+,\langle u_- \rangle^{2\omega} d\mu_-).\]
This completes the proof of Proposition \ref{prop:9}.
\end{proof}

\subsection{The large time behavior of the solution}
In the previous subsection, we have showed that for all multi-indices $\beta$ with $0\leqslant|\beta|\leqslant N_*$, the scattering fields  $z_{\pm}(+\infty;x_1,x_2,u_\mp)$ satisfy:
\[\nabla^{\beta}z_{\pm}(+\infty;x_1,x_2,u_\mp)\in L^2\big(\mathcal{F}_\pm,\langle u_\mp\rangle^{2\omega}d\mu_\mp\big).\] 
We will prove in the following two lemmas that for large $T>0$, the real solutions $z_{\pm}(T,x_1,x_2,u_\mp\mp T)$ converging to the scattering fields also make sense in the corresponding weighted Sobolev spaces.
\begin{Lemma}\label{lemma:15}When $T$ approaches $+\infty$, we have
\[\lim_{T\rightarrow +\infty}\int_{\mathbb{R}^3}|z_{\pm}(+\infty;x_1,x_2,u_\mp)-z_{\pm}(T,x_1,x_2,u_\mp\mp T)|^2\langle u_\mp\rangle^{2\omega}dx_1dx_2du_\mp = 0.\]
Here, we remark: we use a common coordinate system $(x_1,x_2,u_-)$ so that we can compare two vector fields $z_{+}(+\infty;x_1,x_2,u_-)$ and $z_{+}(T,x_1,x_2,u_--T)$ (they are defined on different spaces);
we use a common coordinate system $(x_1,x_2,u_+)$ so that we can compare two vector fields $z_{-}(+\infty;x_1,x_2,u_+)$ and $z_{-}(T,x_1,x_2,u_++T)$ (they are defined on different spaces).	
\end{Lemma}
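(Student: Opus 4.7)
The plan is to express the difference as a tail integral and then mimic the argument used to bound the term $\mathbf{P_2}$ in the proof of Proposition~\ref{prop:9}, exploiting the spacetime bound \eqref{eq:LOW2}. By the symmetry considerations, I will only treat $z_+$.

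First, integrating the $z_+$ equation in \eqref{eq:MHD} along the characteristic $\ell_+(x_1,x_2,u_-)$ gives
\[
z_+(T,x_1,x_2,u_--T)=z_+(0,x_1,x_2,u_-)-\int_0^{T}(\nabla p+z_-\cdot\nabla z_+)(\tau,x_1,x_2,u_--\tau)\,d\tau.
\]
Subtracting this from the definition \eqref{eq:def scattering} of $z_+(+\infty;x_1,x_2,u_-)$ yields
\[
z_+(+\infty;x_1,x_2,u_-)-z_+(T,x_1,x_2,u_--T)=-\int_T^{+\infty}(\nabla p+z_-\cdot\nabla z_+)(\tau,x_1,x_2,u_--\tau)\,d\tau.
\]
So the goal is to show that the $L^2(\mathcal{F}_+,\langle u_-\rangle^{2\omega}d\mu_-)$ norm of the right-hand side tends to $0$ as $T\to+\infty$.

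Second, I apply Cauchy--Schwarz in the $\tau$-variable against the weight $\langle u_+\rangle^{\omega}$, exactly as in the estimate of $\mathbf{P_2}$:
\[
\Big|\int_T^{+\infty}(\nabla p+z_-\cdot\nabla z_+)\,d\tau\Big|^2\leqslant \Big(\int_T^{+\infty}\frac{d\tau}{\langle u_+\rangle^{\omega}}\Big)\Big(\int_T^{+\infty}\langle u_+\rangle^{\omega}|\nabla p+z_-\cdot\nabla z_+|^2 d\tau\Big).
\]
On the line $x_3=u_--\tau$, we have $u_+=u_--2\tau$, so $d\tau=-\tfrac12 du_+$, and the first factor is bounded by $\tfrac12\int_{\mathbb R}\langle u_+\rangle^{-\omega}du_+$, a universal constant independent of $T$, $u_-$ and $a$. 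Multiplying by $\langle u_-\rangle^{2\omega}$, integrating over $dx_1dx_2du_-$, and changing back to Cartesian coordinates $(\tau,x_1,x_2,x_3)$ (via $u_-=x_3+\tau$, $du_-=dx_3$) gives
\[
\int_{\mathbb R^3}\langle u_-\rangle^{2\omega}\big|z_+(+\infty;\cdot)-z_+(T,\cdot)\big|^2 dx_1dx_2du_-\lesssim \int_T^{+\infty}\!\int_{\mathbb R^3}\langle u_-\rangle^{2\omega}\langle u_+\rangle^{\omega}|\nabla p+z_-\cdot\nabla z_+|^2\,dx\,d\tau.
\]

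Third, I invoke the spacetime estimate \eqref{eq:LOW2}, which says that the full spacetime integral $\int_{\mathbb R\times\mathbb R^3}\langle u_-\rangle^{2\omega}\langle u_+\rangle^{\omega}|\nabla p+z_-\cdot\nabla z_+|^2$ is finite (bounded by $\varepsilon_+^2\varepsilon_-^2$). Since the integrand is nonnegative and integrable on $\mathbb R\times\mathbb R^3$, its restriction to the half-space $\{\tau\geqslant T\}$ tends to $0$ as $T\to+\infty$ by the dominated convergence theorem. This gives the desired limit. The argument for $z_-$ is identical after interchanging the roles of $L_+$ and $L_-$.

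There is no real obstacle: the two ingredients (the tail representation coming from the definition of the scattering field, and the global spacetime bound \eqref{eq:LOW2}) are already available, and the only mild subtlety is to ensure uniformity of the constant $\int_{\mathbb R}\langle u_+\rangle^{-\omega}du_+$ in $a$, $T$ and $u_-$, which is immediate since the weight is merely translated.
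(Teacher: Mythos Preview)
Your proof is correct and follows essentially the same approach as the paper: express the difference as the tail integral $\int_T^{+\infty}(\nabla p+z_-\cdot\nabla z_+)\,d\tau$, apply Cauchy--Schwarz in $\tau$ against the weight $\langle u_+\rangle^{\omega}$, and reduce to the tail of the finite spacetime integral \eqref{eq:LOW2}. The paper packages the change of variables via the double characteristic coordinates $(x_1,x_2,u_-,u_+)$ and a characteristic function $\chi_{\tau\geqslant T}$, whereas you compute $u_+=u_--2\tau$ directly, but the argument is the same.
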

\begin{proof}
Based on the symmetry considerations, we only study $z_+$. First of all, we have
\begin{align*}
\mathbf{X}_T&=
\int_{\mathbb{R}^3}|z_{+}(+\infty;x_1,x_2,u_-)-z_{+}(T,x_1,x_2,u_--T)|^2\langle u_-\rangle^{2\omega}dx_1dx_2du_- \\
&=\int_{\mathbb{R}^3} \Big|\int_{T}^{+\infty}(\nabla p+z_{-}\cdot\nabla z_{+})(\tau,x_1,x_2,u_--T)d\tau\Big|^2\langle u_{-}\rangle^{2\omega}dx_1dx_2du_- \\
&\leqslant\int_{\mathbb{R}^3} \Big|\int_{\mathbb{R}}(\nabla p+z_{-}\cdot\nabla z_{+})(x_1,x_2,u_-,u_+)\cdot \text{\Large$\text{\Large$\chi$}$}_{\tau\geqslant T}(x_1,x_2,u_-,u_+)du_+\Big|^2 \langle u_{-}\rangle^{2\omega}dx_1dx_2du_-.
\end{align*}
We remark that the characteristic function $\text{\Large$\text{\Large$\chi$}$}_{\tau\geqslant T}$ is defined on $\mathbb{R}\times \mathbb{R}^3$. We then use the following trick again:
\begin{align*}
\mathbf{X}_T
&\lesssim\int_{\mathbb{R}^3}\!\! \Big(\!\!\int_{\mathbb{R}}\frac{du_+}{\langle u_{+}\rangle^{\omega}}\!\Big)\Big(\!\!\int_{\mathbb{R}}\langle u_+\rangle^\omega|(\nabla p+z_{-}\cdot\nabla z_{+})(x_1,x_2,\!u_-,\!u_+)|^2\!\cdot\! \text{\Large$\text{\Large$\chi$}$}_{\tau\geqslant T}(x_1,x_2,\!u_-,\!u_+)du_+\!\Big)\!\langle u_{-}\rangle^{2\omega} \!dx_1dx_2du_-\\
&\lesssim\int_{\mathbb{R}\times \mathbb{R}^3}\langle u_{-}\rangle^{2\omega}\langle u_+\rangle^\omega|(\nabla p+z_{-}\cdot\nabla z_{+})(x_1,x_2,u_-,u_+)|^2\cdot \text{\Large$\text{\Large$\chi$}$}_{\tau\geqslant T}(x_1,x_2,u_-,u_+)dx_1dx_2du_-du_+\\
&=
\int_{T}^{+\infty}\int_{\mathbb{R}^3}\langle u_{-}\rangle^{2\omega}\langle u_+\rangle^\omega|(\nabla p+z_{-}\cdot\nabla z_{+})(\tau,x)|^2 dxd\tau.
\end{align*}
By \eqref{eq:LOW2}, the above integral is finite. Therefore, when $T\rightarrow +\infty$, the above integral decays to $0$. This proves the lemma.
\end{proof}

\begin{Lemma}\label{lemma:16}
For all multi-indices $\beta$ with $1\leqslant|\beta|\leqslant N_*$, we have
\[
\lim_{T\rightarrow +\infty}\int_{\mathbb{R}^3}\big|\nabla^{\beta}z_{\pm}(+\infty;x_1,x_2,u_\mp)-\nabla^{\beta}z_{\pm}(T,x_1,x_2,u_\mp\mp T)\big|^2\langle u_\mp\rangle^{2\omega}dx_1dx_2du_\mp=0.
\]\end{Lemma}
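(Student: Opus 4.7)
The plan is, by the symmetry considerations, to focus on $z_+$; the argument for $z_-$ is identical. Fix a multi-index $\beta$ with $1\leq|\beta|\leq N_*$ and set
\[
f_T(x_1,x_2,u_-) := z_+(+\infty;x_1,x_2,u_-) - z_+(T,x_1,x_2,u_--T) = -\int_T^{+\infty}(\nabla p+z_-\cdot\nabla z_+)(\tau,x_1,x_2,u_--\tau)\,d\tau.
\]
The first key observation is that $f_T$ is divergence-free in the variables $(x_1,x_2,u_-)$: the scattering field is divergence-free by the remark following Lemma \ref{Claim 1}, while $\operatorname{div} z_+=0$ since $z_+$ solves \eqref{eq:MHD}. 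I would then apply the weighted div-curl lemma \eqref{eq:d-c3} iteratively with $\lambda=\langle u_-\rangle^{2\omega}$ (which is admissible by \eqref{differentiate weights coro}), reducing the control of $\|\langle u_-\rangle^\omega \nabla^\beta f_T\|_{L^2(\mathbb{R}^3)}$ to that of $\|\langle u_-\rangle^\omega f_T\|_{L^2(\mathbb{R}^3)}$, which already tends to $0$ as $T\to+\infty$ by Lemma \ref{lemma:15}, together with the quantities $\|\langle u_-\rangle^\omega \operatorname{curl}\nabla^k f_T\|_{L^2(\mathbb{R}^3)}$ for $0\leq k\leq |\beta|-1$.

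For the curl terms I would mimic the arguments in Lemma \ref{Claim 1}, Lemma \ref{claim-0} and Corollary \ref{Claim 2-0}: since $\operatorname{curl}\nabla p\equiv 0$, and the pointwise dominating functions $\varepsilon_+\varepsilon_-(1+|\tau+a|)^{-\omega}$ furnished by \eqref{eq:nablap} and \eqref{eq:separation} remain integrable on $[T,+\infty)$, the same Fatou/Fubini and dominated convergence steps would justify the $L^2$-identity
\[
\operatorname{curl}\nabla^k f_T(x_1,x_2,u_-) \stackrel{L^2(\mathcal{F}_+,\langle u_-\rangle^{2\omega}d\mu_-)}{=\joinrel=\joinrel=\joinrel=} -\int_T^{+\infty}\nabla^k \operatorname{curl}(z_-\cdot\nabla z_+)(\tau,x_1,x_2,u_--\tau)\,d\tau
\]
for each $0\leq k\leq|\beta|-1\leq N_*-1$. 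Then, passing to double characteristic coordinates and applying Cauchy--Schwarz in $u_+$ exactly as in Lemma \ref{inL2-0} (using that $\int_{\mathbb{R}}\langle u_+\rangle^{-\omega}du_+$ is finite and independent of $a$), I would reduce the weighted $L^2$-norm on the left to the spacetime tail integral
\[
\int_T^{+\infty}\int_{\mathbb{R}^3}\langle u_-\rangle^{2\omega}\langle u_+\rangle^\omega\big|\rho_+^{(k)}\big|^2\,dx\,d\tau,
\]
with $\rho_+^{(k)}$ as in \eqref{definition of rhobeta}.

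Finally, since the integral over all of $\mathbb{R}\times\mathbb{R}^3$ is finite and bounded by a constant multiple of $\varepsilon_+^2\varepsilon_-^2$ thanks to \eqref{eq:HIGH2}, the tail over $[T,+\infty)\times\mathbb{R}^3$ will tend to $0$ as $T\to+\infty$ by the absolute continuity of the Lebesgue integral. Combined with the vanishing of $\|\langle u_-\rangle^\omega f_T\|_{L^2}$ from Lemma \ref{lemma:15}, the iterated div-curl reduction then yields the desired limit. The main subtlety, as in Proposition \ref{prop:9}, will be the careful justification of commuting $\nabla^k\operatorname{curl}$ past the improper $\tau$-integral in an $L^2$ sense; however, since the dominating functions for the truncated integrals on $[T,+\infty)$ are exactly those already treated in that proposition, no new analytic difficulty arises beyond tracking the tail-in-$T$ behavior.
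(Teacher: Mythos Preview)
Your proposal is correct and follows essentially the same route as the paper: reduce via the weighted div-curl lemma to curl terms, pass the curl and derivatives inside the truncated time integral (killing $\nabla p$), switch to double characteristic coordinates and Cauchy--Schwarz in $u_+$, and then invoke the finiteness of the spacetime integral \eqref{eq:HIGH2} to conclude the tail vanishes. If anything, you are more careful than the paper, which compresses the div-curl iteration and the appeal to Lemma~\ref{lemma:15} for the zeroth-order term into a single ``$\lesssim$'' step.
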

\begin{proof}
Based on the symmetry considerations, we only study the derivatives of $z_+$.	
First of all, using the divergence free property, we have
\begin{align*}
\mathbf{Y}_T&=\int_{\mathbb{R}^3}\big|\nabla^{\beta}z_{+}(+\infty;x_1,x_2,u_-)-\nabla^{\beta}z_{+}(T,x_1,x_2,u_--T)\big|^2\langle u_-\rangle^{2\omega}dx_1dx_2du_-\\
&\lesssim\int_{\mathbb{R}^3} \Big|\operatorname{curl }\nabla^{\beta-1}\big(z_{+}(+\infty;x_1,x_2,u_-)-z_{+}(T,x_1,x_2,u_--T)\big)\Big|^2\langle u_{-}\rangle^{2\omega}dx_1dx_2du_-\\
&=\int_{\mathbb{R}^3}
\Big|\int_T^{+\infty} \nabla^{\beta-1}\operatorname{curl }(z_{-}\cdot\nabla z_{+})(\tau,x_1,x_2,u_--\tau)d\tau\Big|^2\langle u_{-}\rangle^{2\omega}dx_1dx_2du_-.
\end{align*}
We then proceed in the same manner as $\mathbf{X}_T$ in the previous lemma:
\begin{align*}
\mathbf{Y}_T
&\lesssim\int_{\mathbb{R}^3}\!\!\Big(\!\!\int_{\mathbb{R}}\frac{du_+}{\langle u_{+}\rangle^{\omega}}\Big)\Big(\!\!\int_{\mathbb{R}}\langle u_+\rangle^\omega\big|\nabla^{\beta-1}\!\operatorname{curl}(z_{-}\cdot\nabla z_{+})(x_1,x_2,\!u_-,\!u_+)\big|^2\!\cdot\!\text{\Large$\text{\Large$\chi$}$}_{\tau\geqslant T}(x_1,x_2,\!u_-,\!u_+)\Big) \langle u_{-}\rangle^{2\omega}\!dx_1dx_2du_-\\
&\lesssim
\int_{T}^{+\infty}\int_{\mathbb{R}^3}\langle u_{-}\rangle^{2\omega}\langle u_+\rangle^\omega\big|\nabla^{\beta-1}\operatorname{curl }(z_{-}\cdot\nabla z_{+})(\tau,x)\big|^2 dxd\tau.
\end{align*}
We have already seen in the proof of Lemma \ref{inL2-0} that this term is bounded above by $\varepsilon_+^2\varepsilon_-^2$ up to a universal constant. Therefore, when $T\rightarrow +\infty$, the above integral decays to $0$. This proves the lemma.
\end{proof}
Similarly, we also have a statement concerning the convergence of  solutions towards the past infinites:
\begin{Lemma}\label{lemma:16-}
	For all multi-indices $\beta$ with $0\leqslant|\beta|\leqslant N_*$, we have
	\[
	\lim_{T\rightarrow +\infty}\int_{\mathbb{R}^3}\big|\nabla^{\beta}z_{\pm}(-\infty;x_1,x_2,u_\mp)-\nabla^{\beta}z_{\pm}(-T,x_1,x_2,u_\mp\pm T)\big|^2\langle u_\mp\rangle^{2\omega}dx_1dx_2du_\mp=0.
	\]\end{Lemma}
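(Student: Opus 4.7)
The plan is to mirror the proofs of Lemmas \ref{lemma:15} and \ref{lemma:16} verbatim, replacing $+\infty$ by $-\infty$ and the tail window $[T, +\infty)$ by $(-\infty, -T]$. Time-translation invariance of the MHD system ensures that the spacetime bounds \eqref{eq:LOW2} and \eqref{eq:HIGH2}, already stated over the full time axis $\mathbb{R} \times \mathbb{R}^3$, cover negative times as well. By symmetry (swapping the roles of $z_+$ and $z_-$, and of $\mathcal{P}_+$ and $\mathcal{P}_-$), it suffices to treat $z_+$ on $\mathcal{P}_+$.

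For $\beta = 0$, integrating the first equation of \eqref{eq:MHD} along the $L_-$-characteristic through $(0, x_1, x_2, u_-)$ from $\tau = 0$ down to $\tau = -T$ and subtracting the definition of the past scattering field yields
\[z_+(-\infty; x_1, x_2, u_-) - z_+(-T, x_1, x_2, u_- + T) = \int_{-\infty}^{-T} (\nabla p + z_- \cdot \nabla z_+)(\tau, x_1, x_2, u_- - \tau) d\tau.\]
Following the computation of $\mathbf{X}_T$ in the proof of Lemma \ref{lemma:15}---namely, splitting $1 = \langle u_+\rangle^{\omega}\cdot\langle u_+\rangle^{-\omega}$ inside the characteristic-function-restricted $\tau$-integral, applying Cauchy--Schwarz together with the $a$-independent bound $\int_{\mathbb{R}}\langle u_+\rangle^{-\omega}du_+ < \infty$, and converting back to Cartesian coordinates---one obtains
\[\int_{\mathbb{R}^3}\big|z_+(-\infty; x_1, x_2, u_-) - z_+(-T, x_1, x_2, u_- + T)\big|^2 \langle u_- \rangle^{2\omega} dx_1 dx_2 du_- \lesssim \int_{-\infty}^{-T}\!\!\int_{\Sigma_\tau}\langle u_-\rangle^{2\omega}\langle u_+\rangle^{\omega}\big|\nabla p + z_- \cdot \nabla z_+\big|^2 dx d\tau.\]
The integrand is $L^1(\mathbb{R} \times \mathbb{R}^3)$ by \eqref{eq:LOW2}, so the tail vanishes as $T \to +\infty$ by absolute continuity of the Lebesgue integral.

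For $\beta \geq 1$, the past analogues of Lemma \ref{Claim 1} and Corollary \ref{Claim 2-0}---proved by identical dominated-convergence and Fubini arguments with $\int_{0}^{-\infty}$ in place of $\int_{0}^{+\infty}$---provide the divergence-free property of $\nabla^{\beta-1}(z_+(-\infty;\cdot) - z_+(-T,\cdot))$ on $\mathcal{P}_+$ and permit the commutation of $\nabla^{\beta}$ with the tail integral. The div-curl lemma (Lemma \ref{d-c}) then bounds the target norm by the $L^2$ norm of the curl, in which the $\nabla p$ contribution drops out since $\operatorname{curl}\nabla p = 0$, yielding
\[\int_{\mathbb{R}^3}\big|\nabla^\beta z_+(-\infty;x_1,x_2,u_-) - \nabla^\beta z_+(-T,x_1,x_2,u_-+T)\big|^2 \langle u_-\rangle^{2\omega} dx_1 dx_2 du_- \lesssim \int_{-\infty}^{-T}\!\!\int_{\Sigma_\tau} \langle u_-\rangle^{2\omega}\langle u_+\rangle^{\omega}\big|\nabla^{\beta-1}\operatorname{curl}(z_-\cdot\nabla z_+)\big|^2 dx d\tau,\]
which vanishes as $T \to +\infty$ by the full-axis version of \eqref{eq:HIGH2}.

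The only point requiring real verification---and the mildest obstacle---is that the Fatou/Fubini manipulations of Lemmas \ref{Claim 1}, \ref{inL2-0}, \ref{inL2''-0} used to justify passing $\nabla^{\beta}$ under $\int_{0}^{-\infty}$ all carry over. This is automatic: the pointwise dominators $\varepsilon_+\varepsilon_-(1+|\tau+a|)^{-\omega}$ supplied by Lemma \ref{SE} and Corollary \ref{coro:bound on p} are $L^1_\tau$ on $(-\infty,0]$ exactly as on $[0,+\infty)$, and the test-function pairings in the distributional identities are handled identically by splitting against $\langle u_+\rangle^{\pm\omega/2}$.
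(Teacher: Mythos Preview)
Your proposal is correct and follows precisely the approach the paper intends: the paper does not write out a proof of Lemma~\ref{lemma:16-} at all, merely prefacing it with ``Similarly, we also have a statement concerning the convergence of solutions towards the past infinities,'' so mirroring Lemmas~\ref{lemma:15} and~\ref{lemma:16} with the tail $(-\infty,-T]$ in place of $[T,+\infty)$ is exactly what is expected. Your observation that \eqref{eq:LOW2} and \eqref{eq:HIGH2} are already stated over $\mathbb{R}\times\mathbb{R}^3$ is the key point that makes the transfer automatic.
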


\subsection{The rigidity theorem 1}\label{IFT}

We now assume that the scattering fields $z_\pm(+\infty;x_1,x_2,u_\mp)$ vanish identically at infinities, i.e., 
\[z_\pm(+\infty;x_1,x_2,u_\mp)\equiv 0\  \text{on $\mathcal{F}_\pm$.}\]

Let $\epsilon<\varepsilon_0$ be an arbitrarily given small positive constant. According to Lemma \ref{lemma:15} and Lemma \ref{lemma:16}, there exists a $T_\epsilon>0$ such that we have the following smallness condition:
\[\sum_{+,-}\sum_{0\leqslant|\beta|\leqslant N_*}
\int_{\mathbb{R}^3}\big|\nabla^{\beta}z_{\pm}(T_\epsilon,x_1,x_2,u_\mp\mp T_\epsilon)\big|^2\langle u_\mp\rangle^{2\omega}dx_1dx_2du_\mp<\epsilon^2.
\]
We now study the position parameter $a$. At initial slice $\Sigma_0$, the position parameter $a_0=0$ is given so that we have constructed the solution $z_\pm(t,x_1,x_2,x_3)$. At time slice $\Sigma_{T_\epsilon}$, within the Cartesian coordinates, the above smallness condition can be expressed as
\[\sum_{+,-}\sum_{0\leqslant|\beta|\leqslant N_*}
\int_{\mathbb{R}^3}\big|\nabla^{\beta}z_{\pm}(T_\epsilon,x_1,x_2,x_3)\big|^2\big(1+|x_3\pm T_\epsilon|^2\big)^{\omega}dx_1dx_2dx_3<\epsilon^2.
\]
\begin{center}
\includegraphics[width=4in]{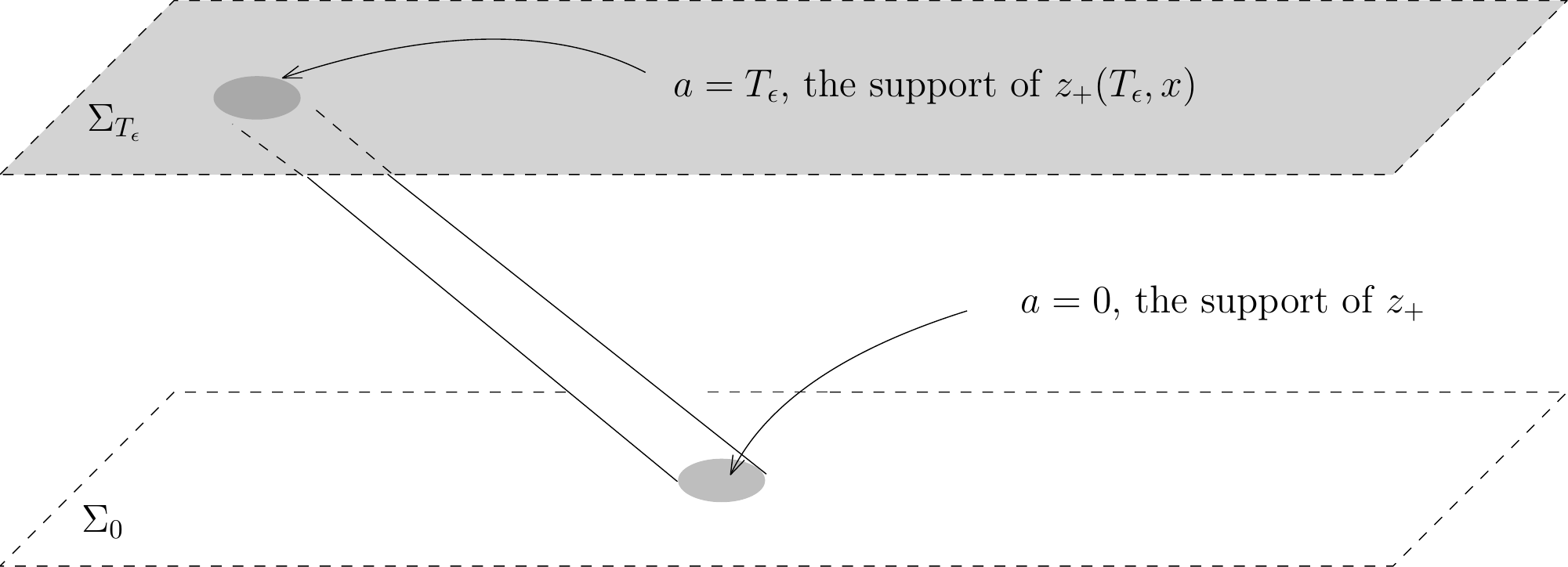}
\end{center}
\smallskip
\noindent
Therefore, if we take $a=T_\epsilon$ as the new position parameter and regard $\big(z_{+}(T_\epsilon,x),z_{-}(T_\epsilon,x)\big)$ as the initial data for the MHD system \eqref{eq:MHD}, we can solve the equations backwards in time. The smallness condition then can be re-expressed as 
\begin{equation*}
	\mathcal{E}^{N_*'}(T_\epsilon) 
	<\epsilon^2,
\end{equation*}
where $N_*'=6$. Since $N_*'\geqslant 5$, we can apply the Main Energy Estimates in this situation. 
In view of the fact that the estimates are independent of the position parameter, we conclude that at time slice $\Sigma_0$, there holds
\begin{equation*}
\mathcal{E}^{N_*'}(0)\leqslant C\epsilon^2,
\end{equation*}
where $C$ is a universal constant. We also remark that on $\Sigma_0$, the new weights associated to $a$ indeed coincide with the original weights, i.e., $\big(1+|x_3|^2\big)^{\omega}$.

Since $\epsilon$ is arbitrary, we arrive at the conclusion that 
\begin{equation*}
\mathcal{E}^{N_*'}(0)=0.
\end{equation*}
This means that the Alfv\'en waves $z_{+}(t,x)$ and $z_{-}(t,x)$ vanish identically.

\subsection{The rigidity theorem 2}\label{IFT2}
\begin{center}
\includegraphics[width=2.5in]{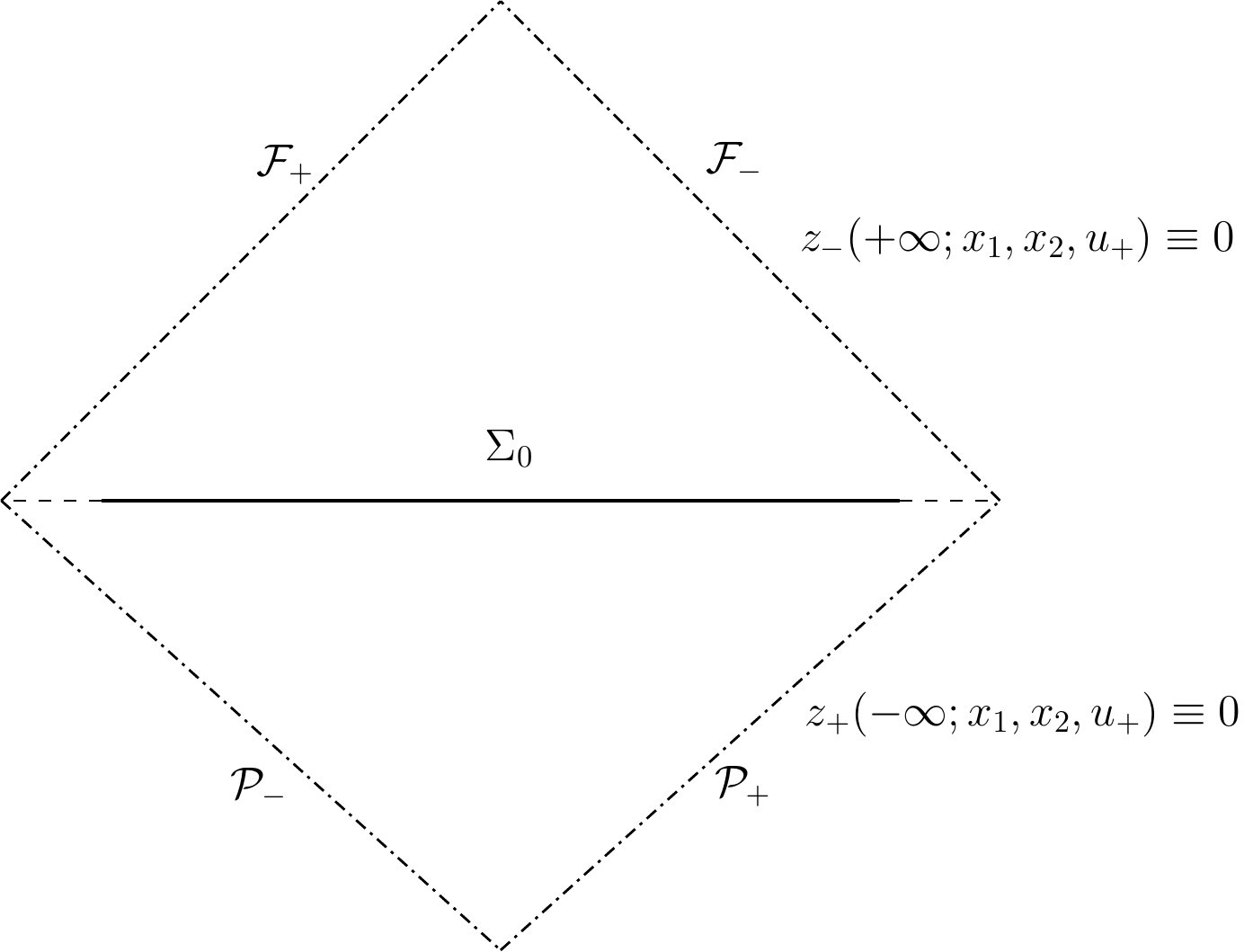}
\end{center}

We now assume that the scattering fields $z_-(+\infty;x_1,x_2,u_+)$ and $z_+(-\infty;x_1,x_2,u_-)$ vanish identically at infinities, i.e., 
\[z_-(+\infty;x_1,x_2,u_+)\equiv 0\  \text{on $\mathcal{F}_-$,}\]
\[z_+(-\infty;x_1,x_2,u_-)\equiv 0\  \text{on $\mathcal{P}_+$.}\]

The proof will make full use of  the Refine Energy Estimates. 	Let $\epsilon\ll\varepsilon_0$ be an arbitrarily given small positive constant. Thanks to Lemma \ref{lemma:15}, Lemma \ref{lemma:16} and Lemma \ref{lemma:16-}, there exists a $T_\epsilon>0$ such that we have the following smallness conditions:
\[\sum_{0\leqslant|\beta|\leqslant N_*}
\int_{\mathbb{R}^3}\big|\nabla^{\beta}z_{-}(T_\epsilon,x_1,x_2,u_++ T_\epsilon)\big|^2\langle u_+\rangle^{2\omega}dx_1dx_2du_+<\epsilon_-^2,
\]
\[\sum_{0\leqslant|\beta|\leqslant N_*}
\int_{\mathbb{R}^3}\big|\nabla^{\beta}z_{+}(-T_\epsilon,x_1,x_2,u_-+ T_\epsilon)\big|^2\langle u_-\rangle^{2\omega}dx_1dx_2du_-<\epsilon_+^2,
\]
where $\epsilon^2=\epsilon_+^2+\epsilon_-^2$. We now study the position parameter $a$. At initial slice $\Sigma_0$, the position parameter $a_0=0$ is given so that we have constructed the solution $z_\pm(t,x_1,x_2,x_3)$. At time slices $\Sigma_{T_\epsilon}$ and $\Sigma_{-T_\epsilon}$, within the Cartesian coordinates, the above smallness conditions can be expressed as
\[\sum_{0\leqslant|\beta|\leqslant N_*}
\int_{\mathbb{R}^3}\big|\nabla^{\beta}z_{-}(T_\epsilon,x_1,x_2,x_3)\big|^2\big(1+|x_3+ T_\epsilon|^2\big)^{\omega}dx_1dx_2dx_3<\epsilon_-^2,\]
\[\sum_{0\leqslant|\beta|\leqslant N_*}
\int_{\mathbb{R}^3}\big|\nabla^{\beta}z_{+}(-T_\epsilon,x_1,x_2,x_3)\big|^2\big(1+|x_3+ T_\epsilon|^2\big)^{\omega}dx_1dx_2dx_3<\epsilon_+^2.\]

\smallskip
 
On the one hand, if we take $a=T_\epsilon$ as the new position parameter and treat $\big(z_{+}(T_\epsilon,x),z_{-}(T_\epsilon,x)\big)$ as the initial data for the MHD system \eqref{eq:MHD}, we can solve the equations backwards in time. We also remark that on $\Sigma_0$, the new weights associated to $a$ indeed coincide with the original weights, i.e., $\big(1+|x_3|^2\big)^{\omega}$. The first smallness condition then can be rephrased as 
\begin{equation*}
\mathcal{E}_{-}^{N_*'}(T_\epsilon) 
<\epsilon_-^2,
\end{equation*}
where $N_*'=6$. 
We notice that 
\begin{equation*}
	\mathcal{E}_{+}^{N_*'}(T_\epsilon) 
	<\varepsilon_{+,0}^2.
\end{equation*}
Since $N_*'\geqslant 5$, we can apply the Refined Energy Estimates in this situation. Due to the fact that the estimates are independent of the position parameter, we infer that at time slice $\Sigma_0$,
\begin{equation*}
\mathcal{E}^{N_*'}_{-}(0)\leqslant C\epsilon_-^2+{C}\epsilon_-^2\varepsilon_{+,0}.
\end{equation*}

On the other hand, if we take $a=-T_\epsilon$ as the new position parameter and regard $\big(z_{+}(-T_\epsilon,x),z_{-}(-T_\epsilon,x)\big)$ as the initial data for the MHD system \eqref{eq:MHD}, we can solve the equations forwards in time. The second smallness condition then can be rephrased as  
\begin{equation*}
\mathcal{E}_{+}^{N_*'}(-T_\epsilon) 
<\epsilon_+^2,
\end{equation*}
where $N_*'=6$. 
We notice that 
\begin{equation*}\begin{split}
	\mathcal{E}_{-}^{N_*'}(-T_\epsilon) 
	<\varepsilon_{-,0}^2.
\end{split}\end{equation*}
Since $N_*'\geqslant 5$, we can also  apply the Refined Energy Estimates in this situation. According to the fact that the estimates are independent of the position parameter, we can derive that at time slice $\Sigma_0$,
\begin{equation*}
\mathcal{E}^{N_*'}_{+}(0)\leqslant C\epsilon_+^2+C\epsilon_+^2\varepsilon_{-,0}.
\end{equation*}

\smallskip

Adding these two results gives
\begin{equation*}
\mathcal{E}^{N_*'}(0)\leqslant C\epsilon^2+{C}\epsilon_-^2\varepsilon_{+,0}+{C}\epsilon_+^2\varepsilon_{-,0}.
\end{equation*}
Therefore, in view of the smallness of $\varepsilon_{+,0}$ and $\varepsilon_{-,0}$, the choice of $\epsilon$ that $\epsilon\ll\varepsilon_0$ leads us to
\begin{equation*}
\mathcal{E}^{N_*'}(0)\leqslant 2C\epsilon^2.
\end{equation*}

Since $\epsilon$ is arbitrary, we conclude that 
\begin{equation*}
\mathcal{E}^{N_*'}(0)=0.
\end{equation*}
This implies that the Alfv\'en waves $z_{+}(t,x)$ and $z_{-}(t,x)$ vanish identically.

\end{document}